\DeclareMathOperator{\Aut}{\mathrm{Aut}}
\DeclareMathOperator{\CM}{\mathrm{CM}}
\DeclareMathOperator{\norm}{\mathrm{norm}}
\DeclareMathOperator{\Norm}{\mathrm{Norm}}
\DeclareMathOperator{\rank}{\mathrm{rank}}
\DeclareMathOperator{\Reg}{\mathrm{Reg}}
\DeclareMathOperator{\Tor}{\mathrm{Tor}}
\begin{document}
 \bibliographystyle{plain}

 \newtheorem{theorem}{Theorem}[section]
 \newtheorem{lemma}{Lemma}[section]
 \newtheorem{corollary}{Corollary}[section]
 \newtheorem{conjecture}{Conjecture}
 \newtheorem{definition}{Definition}
 
 \newcommand{\mc}{\mathcal}
 \newcommand{\A}{\mc A}
 \newcommand{\B}{\mc B}
 \newcommand{\cc}{\mc C}
 \newcommand{\D}{\mc D}
 \newcommand{\E}{\mc E}
 \newcommand{\F}{\mc F}
 \newcommand{\G}{\mc G}
 \newcommand{\hH}{\mc H}
 \newcommand{\I}{\mc I}
 \newcommand{\J}{\mc J}
 \newcommand{\K}{\mc K}
 \newcommand{\eL}{\mc L}
 \newcommand{\M}{\mc M}
 \newcommand{\eN}{\mc N}
 \newcommand{\pp}{\mc P}
 \newcommand{\qq}{\mc Q}
 \newcommand{\U}{\mc U}
 \newcommand{\V}{\mc V}
 \newcommand{\W}{\mc W}
 \newcommand{\X}{\mc X}
 \newcommand{\Y}{\mc Y}
 \newcommand{\zZ}{\mc Z}
 \newcommand{\C}{\mathbb{C}}
 \newcommand{\R}{\mathbb{R}}
 \newcommand{\Q}{\mathbb{Q}}
 \newcommand{\T}{\mathbb{T}}
 \newcommand{\Z}{\mathbb{Z}}
 \newcommand{\aA}{\mathfrak A}
 \newcommand{\bB}{\mathfrak B}
 \newcommand{\cC}{\mathfrak C}
 \newcommand{\dD}{\mathfrak D}
 \newcommand{\ee}{\mathfrak E}
 \newcommand{\ff}{\mathfrak F}
 \newcommand{\iI}{\mathfrak I}
 \newcommand{\nN}{\mathfrak N}
 \newcommand{\pP}{\mathfrak P}
 \newcommand{\uU}{\mathfrak U}
 \newcommand{\fb}{f_{\beta}}
 \newcommand{\fg}{f_{\gamma}}
 \newcommand{\gb}{g_{\beta}}
 \newcommand{\vphi}{\varphi}
 \newcommand{\p}{\varphi}
 \newcommand{\ep}{\varepsilon}
 \newcommand{\bo}{\boldsymbol 0}
 \newcommand{\bone}{\boldsymbol 1}
 \newcommand{\ba}{\boldsymbol a}
 \newcommand{\bb}{\boldsymbol b}
 \newcommand{\bc}{\boldsymbol c}
 \newcommand{\be}{\boldsymbol e}
 \newcommand{\bff}{\boldsymbol f}
 \newcommand{\bk}{\boldsymbol k}
 \newcommand{\bl}{\boldsymbol l}
 \newcommand{\bm}{\boldsymbol m}
 \newcommand{\bn}{\boldsymbol n}
 \newcommand{\bgamma}{\boldsymbol \gamma}
 \newcommand{\blambda}{\boldsymbol \lambda}
 \newcommand{\bq}{\boldsymbol q}
 \newcommand{\bt}{\boldsymbol t}
 \newcommand{\bu}{\boldsymbol u}
 \newcommand{\bv}{\boldsymbol v}
 \newcommand{\bw}{\boldsymbol w}
 \newcommand{\bx}{\boldsymbol x}
 \newcommand{\bwy}{\boldsymbol y}
 \newcommand{\bnu}{\boldsymbol \nu}
 \newcommand{\bxi}{\boldsymbol \xi}
 \newcommand{\bz}{\boldsymbol z}
 \newcommand{\whG}{\widehat{G}}
 \newcommand{\oK}{\overline{K}}
 \newcommand{\oKt}{\overline{K}^{\times}}
 \newcommand{\oQ}{\overline{\Q}}
 \newcommand{\oq}{\oQ^{\times}}
 \newcommand{\oQt}{\oQ^{\times}/\Tor\bigl(\oQ^{\times}\bigr)}
 \newcommand{\ot}{\Tor\bigl(\oQ^{\times}\bigr)}
 \newcommand{\h}{\frac12}
 \newcommand{\hh}{\tfrac12}
 \newcommand{\dx}{\text{\rm d}x}
 \newcommand{\dbx}{\text{\rm d}\bx}
 \newcommand{\dy}{\text{\rm d}y}
 \newcommand{\dmu}{\text{\rm d}\mu}
 \newcommand{\dnu}{\text{\rm d}\nu}
 \newcommand{\dla}{\text{\rm d}\lambda}
 \newcommand{\dlav}{\text{\rm d}\lambda_v}
 \newcommand{\trho}{\widetilde{\rho}}
 \newcommand{\dtrho}{\text{\rm d}\widetilde{\rho}}
 \newcommand{\drho}{\text{\rm d}\rho}

\title[Minkowski]{Minkowski's theorem on independent\\conjugate units}
\author{Shabnam Akhtari and Jeffrey~D.~Vaaler}
\subjclass[2010]{11J25, 11R27, 11S20}
\keywords{independent units, Weil height}
\thanks{}
\medskip

\address{Department of Mathematics, University of Oregon, Eugene, Oregon 97402 USA}
\email{akhtari@uoregon.edu}
\medskip

\address{Department of Mathematics, University of Texas, Austin, Texas 78712 USA}
\email{vaaler@math.utexas.edu}

\begin{abstract}
We call a unit $\beta$ in a finite, Galois extension $l/\Q$ a Minkowski unit if the subgroup generated by 
$\beta$ and its conjugates over $\Q$ has maximum rank in the unit group of $l$. Minkowski showed the existence of such 
units in every Galois extension.  We give a new proof of Minkowski's theorem and show that there exists a
Minkowski unit $\beta \in l$ such that the Weil height of $\beta$ is comparable with the sum of the heights of a fundamental 
system of units for $l$.  Our proof implies a bound on the index of the subgroup generated by the algebraic conjugates 
of $\beta$ in the unit group of $l$. 

If $k$ is an intermediate field such that
\begin{equation*}\label{first0}
\Q \subseteq k \subseteq l,
\end{equation*}
and $l/\mathbb{Q}$ and  $k/\Q$ are Galois extensions, we prove an analogous bound for the subgroup of relative units. In 
order to establish our results for relative units, a number of new ideas are combined with techniques from the geometry 
of numbers and the Galois action on places.
\end{abstract}
\maketitle
\numberwithin{equation}{section}

 \section{Introduction}%%%%%%%%%%%%%%%%%%%%%%%%%%%%%%%%%%%%%%%%%%%%%%%%

Let $l$ be an algebraic number field, $O_l$ the ring of algebraic integers in $l$, and $O_l^{\times}$ the multiplicative
group of units in $O_l$.  We write $\Tor\bigl(O_l^{\times}\bigr)$ for the torsion subgroup of $O_l^{\times}$, which is
the finite group of roots of unity in $O_l^{\times}$.  Dirichlet's unit theorem asserts that there exists a nonnegative integer $r = r(l)$,
called the {\it rank} of $O_l^{\times}$, such that
\begin{equation*}\label{first1}
O_l^{\times} \cong \Tor\bigl(O_l^{\times}\bigr) \oplus \Z^r.
\end{equation*}
Alternatively, there exists a finite collection $\eta_1, \eta_2, \dots , \eta_r$ of units such that each unit $\alpha$ in $O_l^{\times}$
has a unique representation as
\begin{equation*}\label{first4}
\alpha = \zeta \eta_1^{m_1} \eta_2^{m_2} \cdots \eta_r^{m_r},
\end{equation*}
where $\zeta$ is an element of the torsion subgroup and $m_1, m_2, \dots, m_r$ are integers.  We say that
$\eta_1, \eta_2, \dots , \eta_r$ form a {\it fundamental system} of units for $O_l^{\times}$.  In this
paper we assume that $l$ is not $\Q$, and that $l$ is not an imaginary, quadratic extension of $\Q$.  This hypothesis
insures that the rank $r = r(l)$ is positive.

If $l/\Q$ is a Galois extension, we write $\Aut(l/\Q)$ for the Galois group of all automorphisms of $l$ that fix $\Q$.
And if $\beta$ is a unit in $O_l^{\times}$, we write
\begin{equation}\label{first7}
\langle \sigma(\beta) : \sigma \in \Aut(l/\Q)\rangle
\end{equation}
for the subgroup of $O_l^{\times}$ generated by the collection of all Galois conjugates of $\beta$.  Minkowski \cite{minkowski00}
(see also \cite[Theorem 3.26]{narkiewicz2010})
showed that it is always possible to select $\beta$ so that the subgroup (\ref{first7}) has rank $r$, which is obviously the maximum 
possible rank of a subgroup.  We call a unit $\beta$ in a Galois extension a {\it Minkowski unit} if the subgroup (\ref{first7})
generated by its conjugates over $\Q$ has maximum rank.  Thus a unit $\beta$ in a Galois extension $l/\Q$ is a Minkowski
unit if and only if there exists a subset of its conjugates
\begin{equation*}\label{first8}
\big\{\sigma(\beta) : \sigma \in \Aut(l/\Q)\big\}
\end{equation*}
that has cardinality $r$ and is multiplicatively independent.  One of our objectives is to show that there exists a
Minkowski unit $\beta$ such that the Weil height of $\beta$ is comparable with the sum of the heights of a fundamental 
system of units.  If $k$ is an intermediate field such that
\begin{equation*}\label{first9}
\Q \subseteq k \subseteq l,
\end{equation*}
and $k/\Q$ is a Galois extension, we prove an analogous bound for the subgroup of relative units.

Because roots of unity play no significant role in the results we establish, it will be convenient to work with an equivalent
concept of Minkowski unit in the torsion free abelian group
\begin{equation}\label{first10}
F_l = O_l^{\times}/\Tor\bigl(O_l^{\times}\bigr).
\end{equation}
It follows that $O_l^{\times}$ and $F_l$ are both finitely generated abelian groups of the same rank $r = r(l)$, but $F_l$ is 
free abelian.   If
\begin{equation*}\label{first11}
\alpha \mapsto \alpha \Tor\bigl(O_l^{\times}\bigr)
\end{equation*}
is the canonical homomorphism, then the results we prove about subsets of $F_l$ immediately imply corresponding 
results for subsets of $O_l^{\times}$ by considering inverse images.  If $l/\Q$ is a Galois 
extension, then $\Aut(l/\Q)$ acts on $O_l^{\times}$, and also acts on the subgroup $\Tor\bigl(O_l^{\times}\bigr)$.
It follows that each automorphism $\sigma$ in $\Aut(l/\Q)$ induces a well defined automorphism on the quotient group $F_l$.
That is, if $\sigma$ is an automorphism in $\Aut(l/\Q)$ and $\alpha$ is an element of $O_l^{\times}$, then
\begin{equation*}\label{first12}
(\sigma, \alpha) \mapsto \sigma(\alpha)
\end{equation*}  
defines an action of $\Aut(l/\Q)$ on $O_l^{\times}$, and
\begin{equation*}\label{first13}
\bigl(\sigma, \alpha \Tor\bigl(O_l^{\times}\bigr)\bigr) \mapsto \sigma(\alpha) \Tor\bigl(O_l^{\times}\bigr)
\end{equation*}
defines a corresponding action of $\Aut(l/\Q)$ on cosets in $F_l$.  Then it is obvious that the image of the subgroup
(\ref{first7}) in $F_l$, is the subgroup
\begin{equation}\label{first14}
\langle \sigma(\beta) \Tor\bigl(O_l^{\times}\bigr) : \sigma \in \Aut(l/\Q) \rangle.
\end{equation}
Moreover, if $\sigma_1, \sigma_2, \dots , \sigma_r$ are distinct automorphisms in $\Aut(l/\Q)$, then the subset
\begin{equation*}\label{second1}
\big\{\sigma_1(\beta), \sigma_2(\beta), \dots , \sigma_r(\beta)\big\}
\end{equation*}
is multiplicatively independent in $O_l^{\times}$ if and only if the subset
\begin{equation*}\label{second2}
\big\{\sigma_1(\beta) \Tor\bigl(O_l^{\times}\bigr), \sigma_2(\beta) \Tor\bigl(O_l^{\times}\bigr), 
			\dots , \sigma_r(\beta) \Tor\bigl(O_l^{\times}\bigr)\big\}
\end{equation*}
is multiplicatively independent in $F_l$.  Thus we may speak of a {\it Minkowski unit} in $F_l$,
by which we understand a coset $\beta \Tor\bigl(O_l^{\times}\bigr)$ in $F_l$ such that the subgroup (\ref{first14})
has rank $r = r(l)$.  However to simplify notation, in the remainder of the paper we usually write elements of $F_l$ as coset 
representatives rather than as cosets.

We write $l^{\times}$ for the multiplicative group of nonzero elements in $l$, and 
\begin{equation*}\label{sum1}
h : l^{\times} \rightarrow [0, \infty)
\end{equation*}
for the absolute, logarithmic Weil height.  This height is defined later in (\ref{int3}).  As is well known, if $\alpha$ and $\zeta$
belong to $l^{\times}$ and $\zeta$ is a root of unity, then we have $h(\alpha \zeta) = h(\alpha)$.  It follows that $h$ is
constant on cosets of the quotient group 
\begin{equation*}\label{sum3}
\G_l = l^{\times}/\Tor\bigl(l^{\times}\bigr) = l^{\times}/\Tor\bigl(O_l^{\times}\bigr),
\end{equation*}
and therefore $h$ is well defined as a map
\begin{equation}\label{sum5}
h : \G_l \rightarrow [0, \infty).
\end{equation}
Elementary properties of the Weil height imply that the map
\begin{equation*}\label{sum7}
(\alpha, \beta) \mapsto h\bigl(\alpha \beta^{-1}\bigr)
\end{equation*}
defines a metric on $\G_l$.  In this work we will only have occasion to use the height $h$ on the subgroup $F_l \subseteq \G_l$. 
Further properties of the Weil height on groups are discussed in \cite{akhtari2015}, \cite{bombieri2006}, and 
\cite{vaaler2014}.

It follows from Minkowski's work in \cite{minkowski00} that if $F_l$ has positive 
rank $r  = r(l)$, then there exists a coset representative $\beta$ in $F_l$ such that the multiplicative subgroup
\begin{equation}\label{first18}
\bB = \langle \sigma(\beta) : \sigma \in \Aut(l/\Q)\rangle \subseteq F_l
\end{equation}
generated by the orbit of $\beta$ also has rank $r$.  That is, $\beta$ is a Minkowski unit in $F_l$, and therefore the index 
$[F_l : \bB]$ is finite.  Here we give a proof of Minkowski's theorem which includes a bound on the index $[F_l : \bB]$, 
and a bound on the absolute logarithmic Weil height $h(\beta)$ of the Minkowski unit $\beta$.  

\begin{theorem}\label{maintheorem1} Let $l/\mathbb{Q}$ be a Galois extension, and assume that $l$ has positive unit rank $r = r(l)$.
Let $\eta_1, \eta_2, \dots , \eta_r$ be multiplicatively independent units in $F_l$, and write
\begin{equation*}\label{sum9}
\aA = \langle \eta_1, \eta_2, \dots , \eta_r\rangle \subseteq F_l
\end{equation*}
for the subgroup of rank $r$ that they generate.   Then there exists a  Minkowski unit $\beta$ contained in $\aA$ such that
\begin{equation}\label{first16}
h(\beta) \le 2 \sum_{j=1}^r h(\eta_j).
\end{equation}
Moreover, the subgroup {\rm (\ref{first18})} has index bounded by
\begin{equation}\label{first19}
\Reg(l) [F_l : \bB] \le \bigl([l : \Q] h(\beta)\bigr)^r,
\end{equation}
where $\Reg(l)$ is the regulator of $l$.
\end{theorem}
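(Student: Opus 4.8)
\medskip
\noindent\emph{Proof proposal.} The plan is to work in the finitely generated $\Q[G]$-module $V = F_l\otimes_\Z\Q$, where $G=\Aut(l/\Q)$, and to use that $\beta\in F_l$ is a Minkowski unit if and only if it is a cyclic generator of $V$, i.e. $\Q[G]\beta=V$. The first step is to record that $V$ is itself a cyclic $\Q[G]$-module. Dirichlet's theorem places $V$ in the exact sequence of $\Q[G]$-modules $0\to V\to\Q[S_\infty]\to\Q\to 0$, where $S_\infty$ is the (transitive) $G$-set of archimedean places of $l$; since $\Q[G]$ is semisimple this splits, so $V$ is a direct summand of $\Q[S_\infty]\cong\Q[G/H]$, with $H$ the decomposition group of an archimedean place. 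As $\Q[G/H]$ is a quotient of $\Q[G]$ it is cyclic, and a direct summand of a cyclic module, being again a quotient of it, is cyclic. Thus some $v_0\in V$ generates $V$; clearing denominators gives a Minkowski unit in $F_l$ (this recovers Minkowski's theorem), and raising it to the power $[F_l:\aA]$ shows that $\aA$, too, contains Minkowski units.

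The crux is the height inequality (\ref{first16}), and I expect it to be the main obstacle. Decompose $V=\bigoplus_\iota V_\iota$ into isotypic components over the Wedderburn factors $A_\iota$ of $\Q[G]$; then $\Q[G]\beta=V$ iff $e_\iota\beta$ generates the cyclic $A_\iota$-module $V_\iota$ for each central primitive idempotent $e_\iota$, and within $V_\iota$ the non-generators are cut out by the vanishing of suitable maximal minors, hence form a proper Zariski-closed set. Since the $\eta_j$ are multiplicatively independent they form a $\Q$-basis of $V$, so their $\Q[G]$-conjugates span $V$, and in particular span each $V_\iota$ over $\Q$. The aim is then to choose $\gamma_j=\sum_{\sigma\in G}a_{j,\sigma}\sigma\in\Z[G]$ with $\sum_\sigma|a_{j,\sigma}|\le 2$ so that
\begin{equation*}
\beta=\prod_{j=1}^r\gamma_j(\eta_j)
\end{equation*}
is a generator of $V_\iota$ for every $\iota$ at once; such a $\beta$ lies in $\aA$ and, since $h$ is Galois-invariant,
\begin{equation*}
h(\beta)\le\sum_{j=1}^r\Bigl(\sum_\sigma|a_{j,\sigma}|\Bigr)h(\eta_j)\le 2\sum_{j=1}^r h(\eta_j),
\end{equation*}
which is (\ref{first16}).

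The delicate point is this \emph{simultaneity}: one bounded-complexity choice of the $\gamma_j$ must work in every isotypic block, and the block that is hardest (a simple module occurring with large multiplicity) has a non-generator locus of large degree, so a plain integer-coefficient grid argument is insufficient — this is where the Galois action on places must enter. I would argue block by block: inside $V_\iota$ the conjugates of the $\eta_j$ span over $\Q$, so a combination of at most $\rank V_\iota$ of them is a generator of $V_\iota$; then a pigeonhole count over the finitely many blocks (or the combinatorial Nullstellensatz applied in each block, using that replacing a single conjugate by a sum of two conjugates — which is what the bound $\sum_\sigma|a_{j,\sigma}|\le 2$ affords — enlarges the family of available test elements) picks a single tuple $(\gamma_1,\dots,\gamma_r)$ good for all $\iota$.

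Finally, the index bound (\ref{first19}) follows from the geometry of numbers once $\beta$ is fixed. Let $L\colon F_l\to\R^{S_\infty}$ be the logarithmic embedding, so that (in the relevant normalization) $L(F_l)$ is a rank-$r$ lattice of covolume $\Reg(l)$; since $\bB\subseteq F_l$ has finite index, $L(\bB)$ has covolume $[F_l:\bB]\Reg(l)$. As $\beta$ is a Minkowski unit there are $\sigma_1,\dots,\sigma_r\in G$ with $L(\sigma_1\beta),\dots,L(\sigma_r\beta)$ linearly independent; they span a full-rank sublattice of $L(\bB)$, so Hadamard's inequality gives
\begin{equation*}
[F_l:\bB]\,\Reg(l)\le\prod_{k=1}^r\bigl\|L(\sigma_k\beta)\bigr\|\le\bigl([l:\Q]\,h(\beta)\bigr)^r,
\end{equation*}
the last inequality being the definition of the Weil height applied to each conjugate $\sigma_k\beta$ (which has the same height as $\beta$). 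This is (\ref{first19}).
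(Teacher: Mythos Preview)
Your representation-theoretic framework is attractive, but it has a genuine gap at the very point you flag as delicate, and the paper takes a completely different route.

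The gap is in the claim ``such a $\beta$ lies in $\aA$''. With $\gamma_j=\sum_\sigma a_{j,\sigma}\sigma\in\Z[G]$ one has $\gamma_j(\eta_j)=\prod_\sigma\sigma(\eta_j)^{a_{j,\sigma}}$, which involves the conjugates $\sigma(\eta_j)$; but $\aA=\langle\eta_1,\dots,\eta_r\rangle$ is only the group of \emph{integer} combinations of the $\eta_j$ and is not assumed $G$-stable, so $\gamma_j(\eta_j)$ need not belong to $\aA$. Forcing $\gamma_j\in\Z$ repairs the containment, but then the constraint $|\gamma_j|\le 2$ leaves at most $5^r$ candidates, and nothing in your isotypic-block sketch (pigeonhole, combinatorial Nullstellensatz) shows that one of these few elements generates every $V_\iota$. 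So the mechanism you propose for producing the height bound (\ref{first16}) --- bounding the $\ell_1$-norm of the coefficients --- cannot simultaneously deliver $\beta\in\aA$ and the Minkowski property.

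The paper avoids this entirely by a direct geometric construction that never bounds the exponents. Fix an archimedean place $\widehat w$ and call $\beta$ a \emph{special Minkowski unit} if $\log|\beta|_w<0$ for every archimedean $w\ne\widehat w$. A matrix argument (the $N\times N$ matrix $\bigl(\log|\tau_m^{-1}\tau_n(\beta)|_{\widehat w}\bigr)$ has zero column sums and strictly negative off-diagonal entries, hence every $(N{-}1)\times(N{-}1)$ minor is nonsingular) shows that a special Minkowski unit is automatically a Minkowski unit. To manufacture one inside $\aA$, apply an elementary rounding lemma to the nonsingular $(N{-}1)\times(N{-}1)$ matrix $(\log|\eta_n|_w)_{w\ne\widehat w}$: there exists $\xi\in\Z^{N-1}$ with
\begin{equation*}
0<\sum_{n}\xi_n\log|\eta_n|_w\le\sum_{n}\bigl|\log|\eta_n|_w\bigr|\qquad\text{for every }w\ne\widehat w,
\end{equation*}
and then $\beta=\prod_n\eta_n^{-\xi_n}\in\aA$ is special. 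The height bound $h(\beta)\le 2\sum_j h(\eta_j)$ drops out of this inequality together with the product formula --- the individual $|\xi_n|$ are neither bounded nor needed. Your final paragraph on the index bound is essentially correct; the paper simply cites \cite{akhtari2015} for that step.
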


Theorem \ref{maintheorem1} is a simplified version of our more elaborate Theorem \ref{thmspecial1}.  In section \ref{SMU} we 
define {\it special Minkowski units} in $F_l$ with respect to a given archimedean place $\widehat{w}$ of the field $l$.  Lemma \ref{lemspecial2} shows that a special Minkowski unit is a Minkowski unit.  Then Theorem \ref{thmspecial1} establishes 
the existence of a special Minkowski unit $\beta$ with respect to a given archimedean place $\widehat{w}$,
that satisfies the conclusion of Theorem \ref{maintheorem1}, and has additional properties.  For example,
Theorem \ref{thmspecial1} identifies explicit subsets of the orbit of $\beta$ that are multiplicatively independent.  This is
straightforward if $l/\Q$ is a totally real Galois extension, but more complicated if $l/\Q$ is a totally complex 
Galois extension.

If $k$ is an intermediate field, that is, if
\begin{equation*}\label{first23}
\Q \subseteq k \subseteq l,
\end{equation*}
then the norm from $l^{\times}$ into $k^{\times}$ induces a homomorphism
\begin{equation}\label{first27}
\norm_{l/k} : F_l \rightarrow F_k.
\end{equation}
The kernel of the homomorphism (\ref{first27}) is the subgroup
\begin{equation*}\label{first31}
E_{l/k} = \big\{\alpha \in F_l : \norm_{l/k}(\alpha) = 1\big\}
\end{equation*}
of relative units in $F_l$.  (See \cite{akhtari2015}, \cite{costa1991}, and \cite{costa1993} for further properties of this subgroup.)
Because $F_k$ is a free abelian group, the kernel $E_{l/k}$  of the homomorphism (\ref{first27}) is a direct sum in $F_l$.  
And it follows from the discussion in section \ref{RUI} that
\begin{equation}\label{first36}
r(l/k) = \rank E_{l/k} = r(l) - r(k).
\end{equation}
Therefore $E_{l/k}$ is a proper subgroup of $F_l$ if and only if $1 \le r(k) < r(l)$. 

In general the Galois group $\Aut(l/\Q)$ does not act on the subgroup $E_{l/k}$, and therefore the simplest analogue of Minkowski's 
theorem cannot hold in $E_{l/k}$.  If we assume that both $l/\Q$ and $k/\Q$ are Galois extensions, or equivalently, if we assume that 
$\Aut(l/k)$ is a normal subgroup of $\Aut(l/\Q)$, then we show in Lemma \ref{lemfour2} that the group $\Aut(l/\Q)$ does act on the subgroup
$E_{l/k}$ of relative units.  If both $l/\Q$ and $k/\Q$ are Galois extensions, 
we say that an element $\gamma$ in $E_{l/k}$ is a {\it relative Minkowski unit} for the subgroup $E_{l/k}$, if the subgroup
\begin{equation*}\label{first43}  
\cC = \langle \sigma(\gamma) : \sigma \in \Aut(l/\Q)\rangle \subseteq E_{l/k}
\end{equation*}
generated by the orbit of $\gamma$ has rank equal to $r(l/k)$.  Obviously (\ref{first36}) implies that $r(l/k)$ is the maximum
possible rank of a subgroup in $E_{l/k}$.  Our second main result establishes the existence of a relative Minkowski 
unit in $E_{l/k}$, and includes a bound on the height that is analogous to (\ref{first16}).

\begin{theorem}\label{maintheorem2}  Let $l/\Q$ and $k/\Q$ be finite, Galois extensions such that
\begin{equation*}\label{first47}
\Q \subseteq k \subseteq l,\quad\text{and}\quad 1 \le r(k) < r(l),
\end{equation*}
where $r(k)$ is the rank of $F_k$, and $r(l)$ is the rank of $F_l$.
Let $\eta_1, \eta_2, \dots , \eta_{r(l)}$ be a basis for the group $F_l$.
\begin{itemize}
\item[(i)]  If $l/\Q$ is a totally real Galois extension, then there exists a relative Minkowski unit $\gamma$ in $E_{l/k}$
such that
\begin{equation}\label{first52}
h(\gamma) \le 4\bigl([l : k] - 1\bigr) \sum_{j = 1}^{r(l)} h(\eta_j).
\end{equation}
\item[(ii)]  If $l/\Q$ is a totally complex Galois extension, then there exists a relative Minkowski unit $\gamma$ in $E_{l/k}$
such that
\begin{equation}\label{first55}
h(\gamma) \le 8\bigl([l : k] - 1\bigr) \sum_{j = 1}^{r(l)} h(\eta_j).
\end{equation}
\end{itemize}
\end{theorem}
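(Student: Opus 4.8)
The plan is to obtain $\gamma$ by projecting a Minkowski unit of $F_l$, furnished by Theorem~\ref{maintheorem1}, onto the subgroup of relative units, using that $\Aut(l/k)$ is normal in $\Aut(l/\Q)$. Write $G = \Aut(l/\Q)$ and $H = \Aut(l/k)$; by hypothesis $H$ is a normal subgroup of $G$, so in the rational group ring $\Q[G]$ the averaging element $e_H = |H|^{-1}\sum_{\tau\in H}\tau$ is a central idempotent. I work in the $\Q[G]$-module $V = F_l\otimes_{\Z}\Q$. The inclusion $O_k^{\times}\hookrightarrow O_l^{\times}$ induces an injection $F_k\to F_l$, and the composite $F_l\xrightarrow{\norm_{l/k}}F_k\hookrightarrow F_l$ is the operator $\sum_{\tau\in H}\tau = |H|\,e_H$ on $F_l$; hence the kernel of $\norm_{l/k}$ becomes, after $\otimes\Q$, the kernel of $e_H$ on $V$, and so
\begin{equation*}
E_{l/k}\otimes\Q = (1 - e_H)V ,
\end{equation*}
a $\Q[G]$-submodule of $V$ of $\Q$-dimension $r(l/k)$. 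By \eqref{first36} we have $r(l/k) = r(l) - r(k)$, and the hypothesis $1\le r(k) < r(l)$ merely guarantees that this submodule is proper and nonzero.

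Now apply Theorem~\ref{maintheorem1} with $\aA = F_l$ and with the given basis $\eta_1,\dots,\eta_{r(l)}$: this produces a Minkowski unit $\beta\in F_l$ with $h(\beta)\le 2\sum_{j=1}^{r(l)} h(\eta_j)$, and $\beta$ being a Minkowski unit means exactly that $\Q[G]\beta = V$. Put
\begin{equation*}
\gamma = \sum_{\tau\in H}\bigl(\beta - \tau(\beta)\bigr) = |H|\,\beta - \sum_{\tau\in H}\tau(\beta) \in F_l .
\end{equation*}
Since $\sum_{\tau\in H}\tau(\beta)$ is fixed by $H$, one has $\sum_{\sigma\in H}\sigma(\gamma) = 0$, so $\gamma\in E_{l/k}$; in module notation $\gamma = |H|(1-e_H)\beta$. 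Because $1 - e_H$ is central,
\begin{equation*}
\Q[G]\gamma = (1-e_H)\,\Q[G]\beta = (1-e_H)V = E_{l/k}\otimes\Q ,
\end{equation*}
so the $\Q$-span of the orbit $\{\sigma(\gamma):\sigma\in G\}$ is all of $E_{l/k}\otimes\Q$; hence $\langle\sigma(\gamma):\sigma\in G\rangle$ has rank $r(l/k)$, i.e.\ $\gamma$ is a relative Minkowski unit. For the height, the term $\tau = \mathrm{id}$ contributes $0$, so $\gamma$ is a sum of $[l:k] - 1$ terms $\beta - \tau(\beta)$, each of height at most $h(\beta) + h(\tau(\beta)) = 2h(\beta)$ by subadditivity of $h$ on $F_l$ together with the Galois invariance of the Weil height; therefore $h(\gamma)\le 2\bigl([l:k]-1\bigr)h(\beta)\le 4\bigl([l:k]-1\bigr)\sum_{j=1}^{r(l)} h(\eta_j)$, which is part~(i).

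I expect the real work to lie in part~(ii). The projection above in fact yields the constant $4([l:k]-1)$ for a totally complex field as well, but only the bare existence and height statements. To obtain the additional structural information one wants alongside Theorem~\ref{maintheorem2} — an explicit multiplicatively independent subset of the orbit of $\gamma$, and the analogue of the index estimate \eqref{first19} — one should instead take $\beta$ to be a \emph{special} Minkowski unit relative to a fixed archimedean place $\widehat{w}$ of $l$, as supplied by Theorem~\ref{thmspecial1}, and push the projection $\gamma = |H|(1-e_H)\beta$ through while tracking how $G$ permutes the archimedean places. In the totally complex case this bookkeeping is delicate, since the decomposition group of a complex place contains complex conjugation, which doubles the relevant local contributions; correspondingly the height estimate for special Minkowski units at a complex place carries an extra factor of $2$, and the same projection then yields $h(\gamma)\le 8\bigl([l:k]-1\bigr)\sum_{j=1}^{r(l)} h(\eta_j)$, the bound \eqref{first55}. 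Thus the principal obstacle is not any group-theoretic subtlety but the Galois action on archimedean places in the complex case, exactly as the abstract indicates.
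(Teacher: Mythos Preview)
Your argument is correct and in fact cleaner than the paper's. Your element
\[
\gamma \;=\; |H|\beta \Big/ \prod_{\tau\in H}\tau(\beta) \;=\; |H|(1-e_H)\beta
\]
coincides, in the totally real case, with the paper's $\Delta(\beta,\lambda)$: there $G_{\widehat{w}}$ is trivial, $\lambda$ takes the value $|H|-1$ at the identity, $-1$ on $H\setminus\{1\}$, and $0$ elsewhere, so $\Delta(\beta,\lambda)=\beta^{|H|}\big/\prod_{\tau\in H}\tau(\beta)$. The difference is methodological. The paper never invokes the central idempotent $e_H$; instead it builds the bi-homomorphism $\Delta:F_l\times\Z^N\to F_l$, the lattice $\eL_{l/k}\subseteq\Z^N$, and the combinatorial Lemma~\ref{lemmap3}, and it requires $\beta$ to be a \emph{special} Minkowski unit so that the matrix $M(\beta,T,\widehat{w})$ has all $(N-1)\times(N-1)$ minors nonsingular. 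That machinery is what allows the paper to prove the stronger Theorem~\ref{thmmink1}, which exhibits an explicit multiplicatively independent subset of the orbit of $\gamma$. Your representation-theoretic shortcut (normality of $H$ $\Rightarrow$ $e_H$ central $\Rightarrow$ $\Q[G]\gamma=(1-e_H)\Q[G]\beta=(1-e_H)V=E_{l/k}\otimes\Q$) only needs $\beta$ to be an ordinary Minkowski unit from Theorem~\ref{maintheorem1}, and yields the rank statement directly, at the cost of not naming which conjugates are independent.

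One point worth noting: your argument already proves both (i) and (ii) with the single constant $4\bigl([l:k]-1\bigr)$, so your final paragraph is off target. The paper's weaker constant $8$ in the totally complex case is not forced by ``the Galois action on archimedean places''; it arises because, to make the identity $\eta\bigl(\Delta(\alpha,f)\bigr)=\Delta\bigl(\alpha,[\eta,f]\bigr)$ hold when $G_{\widehat{w}}=\{1,\rho\}$ is nontrivial, the paper replaces $\beta$ by $\beta\rho(\beta)$ (Lemma~\ref{lemdelta2}), doubling the height. That substitution is needed for the explicit independence in Theorem~\ref{thmmink1}, not for Theorem~\ref{maintheorem2} as stated. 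So your approach actually improves the stated bound in~(ii); what it gives up is the finer structural information of Theorem~\ref{thmmink1}.
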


We recall that a Galois extension $l/\Q$ is either totally real or totally complex, and therefore (i) and (ii) in the statement of
Theorem \ref{maintheorem2} cover all cases.

Theorem \ref{maintheorem2} is a simplified version of Theorem \ref{thmmink1}, which provides more precise information
about the construction of the relative unit $\gamma$.  A brief outline of the proof of Theorem \ref{thmmink1}, in case $l/\Q$ is a totally 
real Galois extension, is as follows.  We begin with a special Minkowski unit $\beta$ contained in the group $F_l$, which exists by 
Theorem \ref{thmspecial1}, and satisfies the inequality (\ref{first16}).  In section \ref{RUIII} we define a bi-homomorphsim
\begin{equation*}\label{first60}
\Delta : F_l \times \Z^N \rightarrow F_l,
\end{equation*}
where $N$ is the the number of archimedean places of $l$.  Corollary \ref{cordelta1} asserts that if $\beta$ is a special
Minkowski unit, and 
\begin{equation*}\label{first64}
\{f_1, f_2, \dots f_R\},\quad\text{where $R = r(l/k)$,} 
\end{equation*}
is a collection of linearly independent elements in a certain subgroup $\eL_{l/k} \subseteq \Z^N$, then the units in the set
\begin{equation}\label{first68}
\big\{\Delta(\beta, f_1), \Delta(\beta, f_2), \dots , \Delta(\beta, f_R)\big\}
\end{equation}
are multiplicatively independent relative units in $E_{l/k}$.  Then we appeal to results in section \ref{FFG} that establish
the existence of an element $\lambda$ in $\eL_{l/k}$, and a subset 
\begin{equation*}\label{first70}
\{\psi_1, \psi_2, \dots , \psi_R\} \subseteq \Aut(l/\Q),
\end{equation*}
such that the collection of conjugates
\begin{equation*}\label{first72}
\big\{\psi_1\bigl(\Delta(\beta, \lambda)\bigr), \psi_2\bigl(\Delta(\beta, \lambda)\bigr), 
	\dots , \psi_R\bigl(\Delta(\beta, \lambda)\bigr)\big\}
\end{equation*}
has exactly the shape of the subset (\ref{first68}).  It follows that
\begin{equation*}\label{first76}
\gamma = \Delta(\beta, \lambda)
\end{equation*}
is a relative Minkowski unit for the subgroup $E_{l/k}$.  The bound (\ref{first52}) follows from (\ref{first16}), the definition of 
$\lambda$ in $\eL_{l/k}$, and the definition of the bi-homomorphism $\Delta$.

If $l/\Q$ is a totally complex Galois extension the argument is similar.  In this case we use a special Minkowski 
unit of the form $\beta \rho(\beta)$, where $\rho$ is a certain element of $\Aut(l/\Q)$ of order $2$.  Then we show that
\begin{equation*}\label{first79}
\gamma = \Delta(\beta \rho(\beta), \lambda).
\end{equation*}
is a relative Minkowski unit for the subgroup $E_{l/k}$.  The bound (\ref{first55}) on the height of $\gamma$ follows as in
the previous case.  

%%%%%%%%%%%%%%%%%%%%%%%%%%%%%%%%%%%%%%%%%%%%%%%%%%%%%%%%%%%%
\section{Preliminary lemmas}

In this section we prove four elementary lemmas about real matrices.

\begin{lemma}\label{lemgen1} Let $A= \bigl(a_{mn}\bigr)$ be a real, nonsingular, $N \times N$ matrix.  Then there exists a point
\begin{equation*}
\bxi = \begin{pmatrix}   \xi_1 \\
                                    \xi_2 \\
                                    \vdots \\
                                    \xi_N \end{pmatrix}
\end{equation*}
in $\Z^N$ such that
\begin{equation}\label{galois7}
0 < \sum_{n=1}^N a_{mn}\xi_n \le \sum_{n=1}^N |a_{mn}|\quad\text{for each  $m=1, 2, \dots , N$.}
\end{equation}
\end{lemma}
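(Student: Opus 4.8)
The plan is to produce $\bxi$ as the unique lattice point inside a carefully chosen fundamental domain of the lattice $\Lambda = A\Z^N$. The starting observation is that $\sum_{n}|a_{mn}|$ is exactly the length of the projection onto the $m$-th coordinate axis of any parallelepiped $A\bigl(\prod_{n} I_n\bigr)$ in which each $I_n$ is an interval of length $1$, and that the target box $R = \prod_{m}\bigl(0,\sum_{n}|a_{mn}|\bigr]$ has volume $\prod_m\sum_n|a_{mn}| \ge |\det A|$ by Hadamard's inequality. Since a fundamental domain of $\Lambda$ — taken in the strict sense that its $\Lambda$-translates partition $\R^N$ — contains exactly one point of $\Lambda$, and since $R$ does not contain the origin, it is enough to exhibit a fundamental domain of $\Lambda$ contained in $R$; then its unique lattice point, written $A\bxi$ with $\bxi\in\Z^N$, lies in $R$, and that is precisely the assertion (\ref{galois7}).

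The combinatorial crux is the claim that there exist signs $(\epsilon_1,\dots,\epsilon_N)\in\{\pm1\}^N$ such that, writing $D=\mathrm{diag}(\epsilon_1,\dots,\epsilon_N)$, every row of $AD$ contains a strictly negative entry. I would prove this by induction on $N$, the case $N=1$ being immediate. For $N>1$ fix any row index $m_0$; expanding $\det A$ along row $m_0$ produces a column $n_0$ with $a_{m_0 n_0}\ne 0$ whose complementary $(N-1)\times(N-1)$ minor is nonsingular. Applying the inductive hypothesis to that minor chooses $\epsilon_n$ for all $n\ne n_0$ so that every row $m\ne m_0$ of $AD$ already has a negative entry (in some column $\ne n_0$), no matter what $\epsilon_{n_0}$ is; then the choice $\epsilon_{n_0}=-\sgn(a_{m_0 n_0})$ puts a negative entry into row $m_0$ as well.

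Granting such signs, put $I_n=[0,1)$ if $\epsilon_n=1$ and $I_n=(0,1]$ if $\epsilon_n=-1$, and set $\Pi=A\bigl(\prod_{n} I_n\bigr)$. The half-open box $\prod_n I_n$ tiles $\R^N$ under translation by $\Z^N$, so $\Pi$ tiles $\R^N$ under translation by $\Lambda$ and is a fundamental domain. Its projection onto the $m$-th coordinate axis is the Minkowski sum $\sum_n a_{mn} I_n$, an interval of length $\sum_n|a_{mn}|$, and a direct check shows that the infimum of this interval is attained precisely when $a_{mn}\epsilon_n>0$ for every $n$ with $a_{mn}\ne 0$, i.e.\ precisely when row $m$ of $AD$ is entrywise nonnegative. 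By our choice of signs that never happens, so each of these intervals omits its left endpoint; hence a single translate $\Pi-\bv$ is contained in $R$. Being a translate of $\Pi$, the set $\Pi-\bv$ is again a fundamental domain of $\Lambda$, so it contains exactly one lattice point $A\bxi$, and $A\bxi\in R$ yields $0<\sum_n a_{mn}\xi_n\le\sum_n|a_{mn}|$ for each $m$.

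The crucial difficulty is the strict lower bound $0<\sum_n a_{mn}\xi_n$. The naive choice $\Pi=A[0,1)^N$ does not work in general: when $A$ is the identity its only lattice point is the origin, at which every form vanishes. Steering the fundamental domain off the hyperplanes $\{\sum_n a_{mn}x_n=0\}$ while keeping it inside the box $R$ is exactly what the sign vector $(\epsilon_1,\dots,\epsilon_N)$ achieves, and proving that a workable sign vector always exists — via the cofactor-expansion induction above — is the real content of the argument.
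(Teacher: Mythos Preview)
Your argument is correct and takes a genuinely different route from the paper's. The paper proceeds by a rounding-and-limiting trick: for each integer $l\ge 1$ it sets $u_m^{(l)}=(\tfrac12+\tfrac1l)\sum_n|a_{mn}|$, takes $\bxi^{(l)}$ to be a nearest integer point to $A^{-1}\bu^{(l)}$, and checks via the triangle inequality that $\tfrac1l\sum_n|a_{mn}|\le\sum_n a_{mn}\xi_n^{(l)}\le(1+\tfrac1l)\sum_n|a_{mn}|$; since all the $\bxi^{(l)}$ lie in a bounded set, infinitely many coincide, and their common value gives the strict inequality in the limit. This is short and uses nothing beyond rounding. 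Your approach replaces that compactness device with a structural one: the sign lemma (for a nonsingular $A$ there exist $\epsilon_n\in\{\pm1\}$ making every row of $AD$ contain a negative entry) is a clean standalone statement, and its inductive proof via cofactor expansion makes the role of nonsingularity explicit. The fundamental-domain packaging then delivers the strict lower bound directly, with no limiting step. The two proofs are of comparable length; the paper's is slightly more self-contained, while yours isolates a reusable combinatorial fact and gives a clearer geometric picture of why the target box must contain a nonzero lattice point of $A\Z^N$.
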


\begin{proof} For each positive integer $l$, define the (column) vector $\bu^{(l)}$ in $\R^N$ by
\begin{equation}\label{galois9}
u_m^{(l)} = (\tfrac12 + \tfrac1l)\sum_{n=1}^N |a_{mn}|\quad\text{for\ each $m=1, 2, \dots , N$.}
\end{equation}
Then set $\bv^{(l)} = A^{-1}\bu^{(l)}$, and select $\bxi^{(l)}$ in $\Z^N$ so that
\begin{equation*}\label{galois11}
-\hh \le v_n^{(l)} - \xi_n^{(l)} \le \hh\quad\text{for\ each $n=1, 2, \dots , N$.}
\end{equation*}
It follows that 
\begin{equation}\label{galois13}
\biggl|\sum_{n=1}^N a_{mn}\xi_n^{(l)} - u_m^{(l)}\biggr| = \biggl|\sum_{n=1}^N a_{mn}\bigl(\xi_n^{(l)} - v_n^{(l)}\bigr)\biggr| 
	\le \tfrac12 \sum_{n=1}^N |a_{mn}|
\end{equation}
for each $m = 1, 2, \dots , N$.  Combining (\ref{galois9}) and (\ref{galois13}) we get
\begin{equation*}\label{galois14}
 -\tfrac12 \sum_{n=1}^N |a_{mn}| \le  \sum_{n=1}^N a_{mn}\xi_n^{(l)} - (\tfrac12 + \tfrac1l)\sum_{n=1}^N |a_{mn}|        
 		\le \tfrac12 \sum_{n=1}^N |a_{mn}|,
\end{equation*}
and therefore
\begin{equation}\label{galois15}
\tfrac1l \sum_{n=1}^N |a_{mn}| \le \sum_{n=1}^N a_{mn}\xi_n^{(l)} \le (1 + \tfrac1l)\sum_{n=1}^N |a_{mn}|
\end{equation}
for each $m=1, 2, \dots, N.$

Now observe that the set
\begin{equation*}\label{galois17}
\Big\{\bx\in\R^N: 0 < \sum_{n=1}^N a_{mn}x_n \le 2\sum_{n=1}^N |a_{mn}|\ \ \text{for $m=1, 2, \dots , N$}\Big\}
\end{equation*}
is bounded, and therefore it intersects $\Z^N$ in only finitely many points.  It follows from (\ref{galois15}) that the 
map $l\mapsto \bxi^{(l)}$ is constant for $l$ in an infinite subset of $\{1, 2, 3, \dots \}$.  Letting $\bxi$ in $\Z^N$ denote 
such a constant, we conclude that
\begin{equation*}\label{galois19}
0 < \sum_{n=1}^N a_{mn}\xi_n \le \sum_{n=1}^N |a_{mn}|\quad\text{for\ each $m=1, 2,  \dots , N$,}
\end{equation*}
and the lemma plainly follows.
\end{proof}

The following is a variant of a lemma due to Minkowski.

\begin{lemma}\label{lemgen2}  Let $A = \bigl(a_{mn}\bigr)$ be a real, $N \times N$ matrix such that
\begin{equation}\label{galois39}
0 < \sum_{m = 1}^N a_{mn},\quad\text{for each $n = 1, 2, \dots , N$},
\end{equation}
and
\begin{equation*}\label{galois41}
a_{mn} < 0,\quad\text{for $m \not= n$}.
\end{equation*}
Then $A$ is nonsingular.
\end{lemma}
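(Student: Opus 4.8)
The plan is to deduce from the two hypotheses that the transpose $A^T$ is strictly diagonally dominant by rows with positive diagonal, and then to invoke the classical maximal-coordinate argument (the Levy--Desplanques criterion) to show that $A^T$, hence $A$, has trivial null space. Since nonsingularity of $A$ is equivalent to nonsingularity of $A^T$, this suffices; there is no need to compute or sign the determinant, although Minkowski's original lemma does in fact assert $\det A > 0$.

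First I would record the elementary consequence of the hypotheses. If $N = 1$ the hypothesis reads $0 < a_{11}$ and $A$ is obviously nonsingular, so assume $N \ge 2$ and fix a column index $n$. Then $\sum_{m \ne n} a_{mn} < 0$ because every summand is negative, while $\sum_{m=1}^N a_{mn} > 0$ by (\ref{galois39}); subtracting gives
\begin{equation*}
a_{nn} \;>\; -\sum_{m \ne n} a_{mn} \;=\; \sum_{m \ne n} |a_{mn}| \;\ge\; 0,
\end{equation*}
where the middle equality uses $a_{mn} < 0$ for $m \ne n$. Thus each diagonal entry is positive and strictly exceeds the sum of the absolute values of the remaining entries in its column.

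Next I would rule out a nontrivial left null vector of $A$. Suppose $\bwy \in \R^N$ satisfies $\sum_{m=1}^N y_m a_{mn} = 0$ for every $n$, and suppose toward a contradiction that $\bwy \ne \bo$. Choose an index $k$ with $|y_k| = \max_{1 \le m \le N} |y_m| > 0$. Isolating the $m = k$ term in the equation for $n = k$ gives $y_k a_{kk} = -\sum_{m \ne k} y_m a_{mk}$, whence
\begin{equation*}
|y_k|\, a_{kk} \;\le\; \sum_{m \ne k} |y_m|\, |a_{mk}| \;\le\; |y_k| \sum_{m \ne k} |a_{mk}| \;<\; |y_k|\, a_{kk},
\end{equation*}
using $|y_m| \le |y_k|$ in the middle step and the diagonal-dominance inequality from the previous paragraph in the last. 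This contradiction forces $\bwy = \bo$, so the rows of $A$ are linearly independent and $A$ is nonsingular.

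The argument is short, and the only place that requires care is the bookkeeping: hypothesis (\ref{galois39}) is a statement about the \emph{column} sums of $A$, so it must be translated into \emph{row} dominance for $A^T$, and one must verify that the inequalities stay strict at the key step — which they do precisely because (\ref{galois39}) is strict and hence $a_{kk}$ strictly dominates. No deeper input is needed; in particular Lemma~\ref{lemgen1} plays no role here.
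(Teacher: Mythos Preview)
Your proof is correct and follows essentially the same approach as the paper: both assume a nontrivial left null vector $\bwy$ of $A$, pick an index of maximal absolute value, and derive a contradiction from the column hypothesis (\ref{galois39}) together with the negativity of the off-diagonal entries. The only cosmetic difference is that the paper normalizes so that the maximal coordinate is positive and then compares signed quantities directly, while you first isolate the strict column diagonal dominance $a_{nn} > \sum_{m\ne n}|a_{mn}|$ and then run the absolute-value version of the Levy--Desplanques argument; these are equivalent presentations of the same idea.
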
 

\begin{proof}  Assume that $\det A = 0$.  Then there exists a point $\bx \not= \bo$ in $\R^N$ such that
\begin{equation}\label{galois45}
0 = \sum_{m = 1}^N a_{mn} x_m,\quad\text{for each $n = 1, 2, \dots , N$}.
\end{equation}
By replacing $\bx$ with $-\bx$ if necessary, we can select an integer $r$ such that
\begin{equation}\label{galois49}
0 < x_r = \max\{|x_n| : n = 1, 2, \dots , N\}.
\end{equation}
Then applying (\ref{galois45}) with $n = r$, and using (\ref{galois49}), we find that
\begin{align*}\label{galois53}
\begin{split}
0 &= \sum_{m = 1}^N a_{m r} x_m = a_{r r} x_r + \sum_{\substack{m = 1\\m \not= r}}^N a_{m r} x_m\\
   &\ge a_{r r} x_r + \sum_{\substack{m = 1\\m \not= r}}^N a_{m r} x_r = x_r \sum_{m = 1}^N a_{m r} > 0.
\end{split}
\end{align*}
We conclude that $\bx \not= \bo$ does not exist, and therefore $A$ is nonsingular.
\end{proof}

If $A$ is a real, $N \times N$ matrix then the $\Q$-rank of $A$ is the number of $\Q$-linearly independent rows (or columns) 
of $A$.  Similarly, the $\R$-rank of $A$ is the number of $\R$-linearly independent rows (or columns) of $A$.
In general the $\Q$-rank of $A$ is greater than or equal to the $\R$-rank of $A$.  Of course $\det A \not= 0$ if and only if the 
$\R$-rank of $A$ is $N$, and this implies that the $\Q$-rank is also $N$.  But the matrix
\begin{equation*}\label{gen1}
A = \begin{pmatrix} \sqrt{2}& 1\\
                                        2 &  \sqrt{2} \end{pmatrix}
\end{equation*}  
has $\Q$-rank equal to $2$, and $\R$-rank equal to $1$.

\begin{lemma}\label{lemgen3}  Let $A = \bigl(a_{mn}\bigr)$ be a real, $N \times N$ matrix such that
\begin{equation}\label{gen3}
\sum_{m = 1}^N a_{mn} = 0,\quad\text{for each $n = 1, 2, \dots , N$},
\end{equation}
and
\begin{equation}\label{gen6}
\sum_{n = 1}^N a_{mn} = 0,\quad\text{for each $m = 1, 2, \dots , N$}.
\end{equation}
Let $A_{(m,n)}$ denote the $(N-1) \times (N-1)$ submatrix of $A$ obtained by removing the row indexed by $m$, and
removing the column indexed by $n$.  Then there exists a real constant $c$ such that
\begin{equation}\label{gen8}
(-1)^{m+n} \det A_{(m,n)} = c
\end{equation}
for each pair $(m,n)$.  Moreover, we have $c \not= 0$ if and only if the $\R$-rank of $A$ and the $\Q$-rank of
$A$ are both equal to $N-1$.
\end{lemma}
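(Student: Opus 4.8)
The plan is to establish two facts: (a) the quantity $(-1)^{m+n}\det A_{(m,n)}$ does not depend on the pair $(m,n)$, and (b) this common value is nonzero precisely when $\rank A = N-1$ (over either $\R$ or $\Q$, and these ranks agree in that case). For part (a), I would use the hypotheses (\ref{gen3}) and (\ref{gen6}) that all row sums and all column sums of $A$ vanish. Let $\bone$ denote the all-ones column vector in $\R^N$. Then (\ref{gen6}) says $A\bone = \bo$ and (\ref{gen3}) says $\bone^{t} A = \bo^{t}$. The cleanest route is via the classical adjugate identity $A \cdot \operatorname{adj}(A) = (\det A) I = \operatorname{adj}(A) \cdot A$, where $\operatorname{adj}(A)$ is the transpose of the cofactor matrix, i.e. $\bigl(\operatorname{adj} A\bigr)_{n,m} = (-1)^{m+n}\det A_{(m,n)}$. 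Since $\det A = 0$ (because $A\bone = \bo$ with $\bone \neq \bo$), we get $A \cdot \operatorname{adj}(A) = 0$ and $\operatorname{adj}(A) \cdot A = 0$. The first says every column of $\operatorname{adj}(A)$ lies in $\ker A$; the second says every row of $\operatorname{adj}(A)$ lies in the left kernel of $A$.

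Now I split on the rank of $A$. If $\rank A \le N-2$, then every $(N-1)\times(N-1)$ minor vanishes, so $\operatorname{adj}(A) = 0$ and the common constant is $c = 0$; in this case the $\R$-rank is at most $N-2 < N-1$, consistent with the claimed equivalence. If $\rank A = N-1$ over $\R$, then $\ker A$ is one-dimensional, hence spanned by $\bone$ (which is a nonzero kernel vector by (\ref{gen6})), and likewise the left kernel is spanned by $\bone^{t}$ by (\ref{gen3}). Since each column of $\operatorname{adj}(A)$ lies in $\ker A = \R\bone$, we can write $\operatorname{adj}(A) = \bone\, \bw^{t}$ for some row vector $\bw$; since each row lies in $\R\bone^{t}$, we must have $\bw = c\,\bone$ for a scalar $c$, so $\operatorname{adj}(A) = c\,\bone\bone^{t}$, which is exactly the statement that $(-1)^{m+n}\det A_{(m,n)} = c$ for all $(m,n)$. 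Moreover $\operatorname{adj}(A) \neq 0$ when $\rank A = N-1$ (some $(N-1)$-minor is nonzero), so $c \neq 0$ in this case. This handles (a) in all cases and gives one direction of (b).

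For the remaining direction of (b) I must rule out $\rank A = N$: but $\det A = 0$ forces $\rank A \le N-1$ always, so the only possibilities are $\rank A = N-1$ (giving $c \neq 0$) or $\rank A \le N-2$ (giving $c = 0$), and the equivalence "$c \neq 0 \iff \R\text{-rank} = N-1$" follows. Finally I must address the parenthetical assertion that when $c \neq 0$ the $\Q$-rank also equals $N-1$: this is because the $\Q$-rank is always at least the $\R$-rank, and it is at most $N-1$ since $A$ is singular; once the $\R$-rank is $N-1$, both ranks are squeezed to $N-1$. (Conversely if $c = 0$ then some — in fact every — $(N-1)$-minor vanishes, which bounds the $\Q$-rank by $N-2$ as well.)

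I expect the only subtlety — hardly an obstacle — to be making the vector-space argument in the rank-$(N-1)$ case clean: one must note both that $\bone$ genuinely spans $\ker A$ (not merely lies in it), which uses one-dimensionality of the kernel together with $\bone \neq \bo$, and symmetrically for the left kernel. An alternative, more hands-on proof avoiding the adjugate entirely would be to show directly that consecutive minors are equal: expanding $\det A$ along a row or column and using the vanishing of row/column sums to relate $\det A_{(m,n)}$ to $\det A_{(m,n+1)}$ and to $\det A_{(m+1,n)}$; but the adjugate argument is shorter and I would present that.
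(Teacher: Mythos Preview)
Your argument is correct and is essentially the paper's proof repackaged in adjugate language: the identities $A\cdot\operatorname{adj}(A)=0$ and $\operatorname{adj}(A)\cdot A=0$ are precisely the Laplace expansions the paper writes out, and both proofs then use that in the rank-$(N-1)$ case the right and left kernels of $A$ are the lines spanned by $\bone$ and $\bone^{t}$ to force $\operatorname{adj}(A)=c\,\bone\bone^{t}$.

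One small correction: your final parenthetical claim that ``every $(N-1)$-minor vanishes, which bounds the $\Q$-rank by $N-2$ as well'' is not true in general for real matrices (e.g.\ the $3\times 3$ matrix with rows $(1,0,0)$, $(\sqrt{2},0,0)$, $(0,0,0)$ has all $2\times 2$ minors zero but $\Q$-rank $2$). Fortunately you do not need this: the biconditional ``$c\neq 0\iff$ both ranks equal $N-1$'' already follows from your analysis, since $c=0$ forces $\R$-rank $\le N-2$, which alone makes the conjunction fail.
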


\begin{proof}  It is clear from (\ref{gen3}) that the $\Q$-rank of $A$ is less than or equal to $N-1$, and $\det A = 0$.  
If the $\R$-rank is less than or equal to $N-2$ then (\ref{gen8}) holds with $c = 0$.  Thus we assume for the 
remainder of the proof that the $\R$-rank of $A$, and the $\Q$-rank of $A$, are both equal to $N-1$.  Then it follows from 
(\ref{gen3}) that 
\begin{equation}\label{galois1}
\sum_{m = 1}^N a_{mn} x_m = 0,\quad\text{for each $n = 1, 2, \dots , N$},
\end{equation}
if and only if $m \mapsto x_m$ is constant, and it follows from (\ref{gen6}) that
\begin{equation}\label{galois3}
\sum_{n = 1}^N a_{mn} y_n = 0,\quad\text{for each $m = 1, 2, \dots , N$},
\end{equation}
if and only if $n \mapsto y_n$ is constant.  We also have 
\begin{equation*}\label{galois20}
(-1)^{i + j}\det A_{(i,j)} \not= 0
\end{equation*}
for some pair of indices $(i,j)$. 

Recall that the Laplace expansion of the determinant along columns is
\begin{equation}\label{galois21}
\sum_{m=1}^N (-1)^{m+t} a_{ms}\det A_{(m, t)} = \begin{cases} \det A& \text{if $s = t$,}\\
                                                                                                             0& \text{if $s \not= t$.}\end{cases}
\end{equation}
As $\det A = 0$, it follows from (\ref{galois1}) and (\ref{galois21}) that for each index $t$ the function
\begin{equation*}\label{galois23}
m \mapsto (-1)^{m + t} \det A_{(m, t)} = c(t)
\end{equation*}
is a real constant that depends only on $t$.  Similarly, the expansion along rows is
\begin{equation}\label{galois26}
\sum_{n=1}^N (-1)^{v + n} a_{un}\det A_{(v, n)} = \begin{cases} \det A& \text{if $u = v$,}\\
                                                                                                         0& \text{if $u \not= v$.}\end{cases}
\end{equation}
Again we have $\det A = 0$, and therefore (\ref{galois3}) and (\ref{galois26}) imply that for each index $v$ the function
\begin{equation*}\label{galois29}
n \mapsto (-1)^{v + n} \det A_{(v, n)} = d(v)
\end{equation*}
is a real constant that depends only on $v$.  We have shown that
\begin{equation}\label{galois31}
c(n) = (-1)^{m + n} \det A_{(m, n)} = d(m)
\end{equation}
for each pair of indices $(m, n)$.  It follows from (\ref{galois31}) that both
\begin{equation*}\label{galois33}
m \mapsto d(m),\quad\text{and}\quad n \mapsto c(n),
\end{equation*}
are constant.  Taking $i = m$ and $j = n$ shows that this constant is 
\begin{equation*}\label{galois35}
(-1)^{i + j} \det A_{(i, j)} \not= 0.
\end{equation*}
This proves the lemma.
\end{proof}

Lemma \ref{lemgen2} and Lemma \ref{lemgen3} can be combined to establish the following general result.

\begin{lemma}\label{lemgen4}  Let $A = \bigl(a_{mn}\bigr)$ be a real, $N \times N$ matrix such that
\begin{equation}\label{gen201}
0 = \sum_{m = 1}^N a_{mn},\quad\text{for each $n = 1, 2, \dots , N$},
\end{equation}
and
\begin{equation}\label{gen203}
a_{mn} < 0,\quad\text{for $m \not= n$}.
\end{equation}
Then $A$ satisfies the following conditions.
\begin{itemize}
\item[(i)]  The $\Q$-$\rank$ of $A$, and the $\R$-$\rank$ of $A$, are both equal to $N-1$.
\item[(ii)]  If $\bwy \not= \bo$ is a point in $\R^N$ such that
\begin{equation}\label{gen204}
0 = \sum_{n = 1}^N a_{mn} y_n,\quad\text{for each $m = 1, 2, \dots , N$},
\end{equation}
then the co-ordinates $y_1, y_2, \dots , y_N$ are all positive, or all negative.
\item[(iii)]  For each pair $(m, n)$ the submatrix $A_{(m, n)}$ is nonsingular.
\end{itemize}
\end{lemma}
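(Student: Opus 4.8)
The plan is to establish (i), (ii), and (iii) in that order, using Lemma \ref{lemgen2} for the rank statement, a direct argument with the sign pattern for (ii), and Lemma \ref{lemgen3} for the nonvanishing of the minors in (iii). We may assume $N \ge 2$, the case $N = 1$ being trivial. The first remark is that every proper principal submatrix of $A$ again satisfies the hypotheses of Lemma \ref{lemgen2}: for a nonempty proper subset $T \subseteq \{1, \dots, N\}$ the submatrix $\bigl(a_{mn}\bigr)_{m,n \in T}$ still has strictly negative off-diagonal entries by (\ref{gen203}), and by (\ref{gen201}), for each $n \in T$ its $n$-th column sum equals $-\sum_{m \notin T} a_{mn} = \sum_{m \notin T}(-a_{mn}) > 0$, since $m \not= n$, hence $a_{mn} < 0$, for every $m$ in the nonempty set $\{1, \dots, N\} \setminus T$. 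Lemma \ref{lemgen2} then gives that this submatrix is nonsingular; in particular $A_{(r,r)}$ is nonsingular for every $r$. Part (i) follows at once: $A_{(1,1)}$ is a nonsingular $(N-1)\times(N-1)$ submatrix, so the $\R$-rank of $A$ is at least $N-1$, while (\ref{gen201}) shows the all-ones row vector annihilates $A$, so the $\Q$-rank is at most $N-1$; since the $\Q$-rank dominates the $\R$-rank, both equal $N-1$.

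For (ii), let $\bwy \not= \bo$ satisfy (\ref{gen204}); after replacing $\bwy$ by $-\bwy$ if necessary the set $P = \{n : y_n > 0\}$ is nonempty, and I claim $P = \{1, \dots, N\}$, which is the assertion. If not, $P^{c} = \{1, \dots, N\} \setminus P$ is nonempty, and summing (\ref{gen204}) over $m \in P$ gives
\begin{equation*}
0 = \sum_{m \in P} \sum_{n=1}^N a_{mn} y_n = \sum_{n \in P}\Bigl(\sum_{m \in P} a_{mn}\Bigr) y_n + \sum_{n \in P^{c}}\Bigl(\sum_{m \in P} a_{mn}\Bigr) y_n.
\end{equation*}
For $n \in P$ one has $\sum_{m \in P} a_{mn} = -\sum_{m \in P^{c}} a_{mn} > 0$ (every $a_{mn}$ with $m \in P^{c}$ has $m \not= n$, hence is negative, and $P^{c} \not= \emptyset$) and $y_n > 0$, so the first sum is strictly positive because $P \not= \emptyset$; for $n \in P^{c}$ one has $y_n \le 0$ and $\sum_{m \in P} a_{mn} < 0$, so every term of the second sum is nonnegative. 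The right-hand side is therefore strictly positive, a contradiction. Hence all coordinates of $\bwy$ have the same sign; in particular none of them vanishes.

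For (iii), use (i) to fix a generator $\bwy$ of the one-dimensional kernel of $A$, whose coordinates are all nonzero by (ii), and pass to the column-scaled matrix $\widetilde A = \bigl(a_{mn} y_n\bigr)_{m,n} = A\,\mathrm{diag}(y_1, \dots, y_N)$. By (\ref{gen201}) its column sums vanish and by (\ref{gen204}) its row sums vanish, so $\widetilde A$ satisfies hypotheses (\ref{gen3}) and (\ref{gen6}) of Lemma \ref{lemgen3}; moreover $\widetilde A$ differs from $A$ by right multiplication by an invertible diagonal matrix, so its $\R$-rank equals that of $A$, namely $N-1$, and its $\Q$-rank, lying between the $\R$-rank and the bound $N-1$ from the zero column sums, also equals $N-1$. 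Lemma \ref{lemgen3} therefore provides a nonzero constant $c$ with $(-1)^{m+n}\det\widetilde A_{(m,n)} = c$ for all $(m,n)$. Since $\widetilde A_{(m,n)} = A_{(m,n)}\,\mathrm{diag}(y_j : j \not= n)$, one has $\det\widetilde A_{(m,n)} = \bigl(\prod_{j \not= n} y_j\bigr)\det A_{(m,n)}$ with $\prod_{j \not= n} y_j \not= 0$, which forces $\det A_{(m,n)} \not= 0$, i.e. $A_{(m,n)}$ is nonsingular.

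The manipulations with principal submatrices and the $\Q$-rank bound are routine. The step needing real care is the sign analysis in (ii): the right move is to sum the relations (\ref{gen204}) over precisely the positivity set $P$, after which the off-diagonal sign condition (\ref{gen203}) makes every surviving term of a single sign and produces the contradiction. The other useful observation is that the column-scaled matrix $\widetilde A$ is exactly the object to which Lemma \ref{lemgen3} applies; this is what converts the statement that $\bwy$ has no zero coordinate into the nonvanishing of all the minors $\det A_{(m,n)}$, and hence gives part (iii), including the submatrices $A_{(m,n)}$ with $m \not= n$.
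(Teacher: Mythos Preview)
Your proof is correct and follows essentially the same route as the paper: Lemma~\ref{lemgen2} applied to principal submatrices, the sign-summing contradiction over the positivity set for (ii), and the column-scaling $A\,\mathrm{diag}(y_1,\dots,y_N)$ followed by Lemma~\ref{lemgen3} for (iii). The only organizational differences are that the paper proves (i) by a direct argument on the left null space (showing any $\bw$ with $\bw^T A = \bo^T$ must be constant) rather than via the nonsingularity of $A_{(1,1)}$, and the paper first establishes $y_r \neq 0$ separately before the sign-partition argument in (ii), whereas your version of (ii) absorbs that case automatically; neither change is substantive.
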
 

\begin{proof}  It is clear from (\ref{gen201}) that the rows of $A$ are $\Q$-linearly dependent, and therefore
the $\Q$-rank of $A$ is at most $N-1$.  Thus it suffices to show that the $\R$-rank of $A$ is 
at least $N-1$.  Let $\bu$ in $\Z^N$ be such that $u_n = 1$ for each $n = 1, 2, \dots , N$, and let $\nN$ be the null space
\begin{equation*}\label{gen207}
\nN = \big\{\bx \in \R^N : \bx^T A = \bo^T\big\}.
\end{equation*}
Then $\nN$ is an $\R$-linear subspace of $\R^N$, and by (\ref{gen201}) the subspace $\nN$ contains $\bu$.  Assume that 
$\bw \not= \bo$ is also in $\nN$.  Let
\begin{equation*}\label{gen209}
w_i = \min\{w_m : m = 1, 2, \dots, N\},\quad\text{and}\quad w_j = \max\{w_m : m = 1, 2, \dots , N\},
\end{equation*}
and assume that $w_i < w_j$.  Then $\bw - w_i \bu$ belongs to $\nN$, and therefore
\begin{equation}\label{gen211}
0 = \sum_{\substack{m = 1\\m \not= i}}^N (w_m - w_i) a_{mn},\quad\text{for each $n = 1, 2, \dots , N$}.
\end{equation}
Taking $n = i$ in (\ref{gen211}), we find that
\begin{equation*}\label{gen212}
0 = \sum_{\substack{m = 1\\m \not= i}}^N (w_m - w_i) a_{mi} \le (w_j - w_i) a_{ji} < 0,
\end{equation*} 
which is impossible.  Therefore $w_i = w_j$, and $\bw$ is a real multiple of $\bu$.  Hence $\nN$ is
an $\R$-linear subspace of dimension $1$.  It follows that the $\R$-rank of $A$ is $N-1$.

Let $1 \le r \le N$ and let $A_{(r, r)}$ be the $(N-1) \times (N-1)$ submatrix of $A$ obtained by removing the row indexed 
by $r$, and the column indexed by $r$.  It follows from (\ref{gen201}) and (\ref{gen203}) that $A_{(r, r)}$ satisfies the
hypotheses of Lemma \ref{lemgen2}, but with $N$ replaced by $N-1$.  We conclude from Lemma \ref{lemgen2} 
that each submatrix $A_{(r, r)}$ is nonsingular.  Now suppose that $\bwy \not= \bo$ satisfies (\ref{gen204}), and $y_r = 0$.
Let $\bz$ be the (column) vector in $\R^{N-1}$ obtained from $\bwy$ by removing $y_r = 0$.  Then we have
\begin{equation*}\label{gen213}
\bz \not= \bo,\quad\text{and}\quad A_{(r, r)} \bz = \bo,
\end{equation*}
which contradicts the fact that the submatrix $A_{(r, r)}$ is nonsingular.  We have shown that if $\bwy \not= \bo$ satisfies
(\ref{gen204}), then $y_r \not= 0$ for each $r = 1, 2, \dots , N$.  Next we define
\begin{equation*}\label{gen214}
I = \{n : 0 < y_n\},\quad\text{and}\quad J = \{n : y_n < 0\},
\end{equation*}
and we assume that both $I$ and $J$ are not empty.  It follows from (\ref{gen201}) and (\ref{gen203}) that
\begin{equation}\label{gen215}
0 < \sum_{m \in I} a_{mn},\quad\text{if $n$ belongs to $I$,}
\end{equation}
and
\begin{equation}\label{gen216}
\sum_{m \in I} a_{mn} < 0,\quad\text{if $n$ belongs to $J$.}
\end{equation}
Applying (\ref{gen204}), (\ref{gen215}), and (\ref{gen216}), we find that
\begin{align*}\label{gen217}
\begin{split}
0 &= \sum_{m \in I} \sum_{n = 1}^N a_{mn} y_n\\
   &= \sum_{n \in I} \biggl(\sum_{m \in I} a_{mn} \biggr) y_n + \sum_{n \in J}\biggl(\sum_{m \in I} a_{mn} \biggr) y_n\\
   &> 0.
\end{split}
\end{align*}
The contradiction implies that either $I$ or $J$ is empty.  That is, the co-ordinates $y_1, y_2, \dots , y_N$ are all
positive, or all negative.

We continue to suppose that $\bwy \not= \bo$ is a point in $\R^N$ that satisfies (\ref{gen204}).  Then we let
$[y_n]$ denote the $N \times N$ diagonal matrix with $y_1, y_2, \dots , y_N$ as consecutive diagonal entries.  We have 
already verified (ii), and therefore
\begin{equation}\label{gen223}
Y = \det [y_n] = y_1 y_2 \cdots y_N \not= 0.
\end{equation}
Let $B$ denote the $N \times N$ real matrix
\begin{equation}\label{gen227}
B = A[y_n] = \bigl(a_{mn} y_n\bigr).
\end{equation}
From (\ref{gen201}) we get
\begin{equation*}\label{gen231}
0 = \sum_{m = 1}^N a_{mn} y_n,\quad\text{for each $n = 1, 2, \dots , N$},
\end{equation*}
and (\ref{gen204}) asserts that
\begin{equation*}\label{gen235}
0 = \sum_{n = 1}^N a_{mn} y_n,\quad\text{for each $m = 1, 2, \dots , N$}.
\end{equation*}
It follows that $B$ satisfies the hypotheses of Lemma \ref{lemgen3}.  As $[y_n]$ is nonsingular, $B$ has rank $(N-1)$. 
Hence there exists a constant $b \not= 0$ such that
\begin{equation*}\label{gen239}
(-1)^{m+n} \det B_{(m, n)} = b
\end{equation*}
for each $(N-1) \times (N-1)$ submatrix $B_{(m, n)}$.  Using (\ref{gen223}) and (\ref{gen227}), we find that
\begin{equation*}\label{gen243}
y_n \det B_{(m, n)} = Y \det A_{(m, n)}
\end{equation*}
for each integer pair $(m, n)$.  This shows that each submatrix $A_{(m, n)}$ is nonsingular, and also establishes the identity
\begin{equation*}\label{gen247}
(-1)^{m+n} Y \det A_{(m, n)} = b y_n
\end{equation*}
for each integer pair $(m, n)$ and a real constant $b \not= 0$.
\end{proof}

%%%%%%%%%%%%%%%%%%%%%%%%%%%%%%%%%%%%%%%%%%%%%%%%%%%%%%%%%%%%%%%
\section{Functions on finite groups}\label{FFG}

Throughout this section we assume that $G$ is a finite group with subgroups $H \subseteq G$ and $K \subseteq G$.  
We assume that $H$ is a normal subgroup of $G$, and we write $N = [G : K]$ for the index of $K$ in 
$G$.  We consider two cases: either $H \cap K = \{1\}$, or $K \subseteq H$.  If $H \cap K = \{1\}$, then 
\begin{equation*}\label{map0}
H K = K H = \{h k : \text{$h \in H$ and $k \in K$}\},
\end{equation*}
and the map
\begin{equation*}\label{map1}
(h, k) \mapsto h k
\end{equation*}
from $H \times K$ into $H K$ is bijective.  Because $H$ is normal in $G$, the subset $H K$ is a subgroup of $G$, (see 
\cite[Chapter 2, Proposition (8.6)]{artin1991}).  If $K \subseteq H$ the situation is simpler because $H K = H$.  In both cases we define
\begin{equation*}\label{map2}
I = [G : HK],\quad\text{and}\quad J = [HK : K],
\end{equation*}
so that $I J = N$. 

We write $\Z^N$ for the free abelian group of rank $N$, and we identify elements of this group with functions
\begin{equation*}\label{map3}
f : G \rightarrow \Z
\end{equation*}
that are constant on each left coset of $K$.  As $K$ has $N$ left cosets in $G$, it is clear that this group of functions {\it is}
free abelian of rank $N$.  Alternatively, we write $G/K$ for the collection of all left cosets of $K$ in $G$, and we identify
elements of $\Z^N$ with functions
\begin{equation*}\label{map4} 
f : G/K \rightarrow \Z.
\end{equation*}

As is well known, the group $G$ acts on the set $G/K$ of all left cosets of $K$ in $G$ by multiplication on the left,
(see \cite[Chapter 5, section 6 and section 7]{artin1991}).  This action induces an action
of $G$ on $\Z^N$ as follows: if $g$ belongs to $G$ and $x \mapsto f(x)$ is a function in $\Z^N$, we write 
$[g, f]$ for the action of $g$ on $f$, and we define this element of $\Z^N$ by
\begin{equation}\label{map5}
x \mapsto [g, f](x) = f\bigl(g^{-1} x\bigr).
\end{equation}
If $1$ is the identity element in $G$ then
\begin{equation}\label{map6}
x \mapsto [1, f](x) = f(x)
\end{equation}
is obvious.  And if $g_1$ and $g_2$ belong to $G$ then
\begin{align}\label{map7}
\begin{split}
\bigl[g_1, [g_2, f]\bigr](x) &= \big[ g_{2} , f\big](g_{1}^{-1} x)\\
				      &= f\bigl(g_2^{-1} g_1^{-1} x\bigr) \\
				      &= f\bigl((g_1 g_2)^{-1} x\bigr) \\
				      &= [g_1 g_2, f](x).
\end{split}
\end{align}
It follows from (\ref{map6}) and (\ref{map7}) that (\ref{map5}) defines an action of the group $G$ on the collection of
functions $\Z^N$.

Let $\{s_1, s_2, \dots , s_I\}$ be a transversal for the left cosets of the subgroup $H K$ in $G$, so that 
\begin{equation*}\label{map10}
G = \bigcup_{i = 1}^I s_i H K
\end{equation*} 
is a disjoint union.  Let $\{t_1, t_2, \dots , t_J\}$ be a transversal for the left cosets of the subgroup $K$ in $H K$, so that
\begin{equation}\label{map15}
H K = \bigcup_{j = 1}^J t_j K
\end{equation} 
is a disjoint union.  It follows that
\begin{equation*}\label{map20}
G = \bigcup_{i = 1}^I s_i H K = \bigcup_{i = 1}^I \bigcup_{j = 1}^J s_i t_j K,
\end{equation*} 
and therefore 
\begin{equation}\label{map25}
\{s_i t_j : \text{$i = 1, 2, \dots , I$ and $j = 1, 2, \dots , J$}\}
\end{equation} 
is a transversal for left cosets of the subgroup $K$ in $G$.  Thus a function $f$ in $\Z^N$ is uniquely determined by its 
values on the distinct left coset representatives (\ref{map25}).  We define the subgroup 
\begin{equation}\label{map30}
\eL = \bigg\{f \in \Z^N : \text{$\sum_{j = 1}^J f(s_i t_j) = 0$ for each $i = 1, 2, \dots , I$}\bigg\}.
\end{equation} 
Because $f$ in $\Z^N$ is constant on each left coset of $K$, the identity (\ref{map15}) implies that
\begin{equation}\label{map31}
\sum_{g \in s_i H K} f(g) = |K| \sum_{j = 1}^J f(s_i t_j)
\end{equation}
for each left coset $s_i H K$ in $G$.  Thus $f$ in $\Z^N$ belongs to the subgroup $\eL$ if and only if the
sum of the values that $f$ takes on each coset of $H K$ is zero.  In particular, the choice of transversals
$\{s_1, s_2, \dots , s_I\}$ and $\{t_1, t_2, \dots , t_J\}$ does not effect the definition of the subgroup $\eL$.

As the subsets
\begin{equation*}\label{map35}
\{s_i t_j : \text{$j = 1, 2, \dots , J$}\},\quad\text{where $i = 1, 2, \dots , I$},
\end{equation*} 
are disjoint, the $I$ linear equations satisfied by functions $f$ in $\eL$ are clearly independent.  Hence
the subgroup $\eL$ has rank $N - I$.  We note that
\begin{equation*}\label{map36}
N - I = I(J -1).
\end{equation*} 
We have observed that the group $G$ acts on the group $\Z^N$ by (\ref{map5}).  We now show that the subgroup
$\eL$ is invariant under this action.  That is, $G$ acts on $\eL$.

\begin{lemma}\label{lemmap1}  Let $g$ be an element of $G$, and let $f$ be a function in $\eL$.  Then the function
\begin{equation*}\label{map39}
x \mapsto [g ,f](x) = f\bigl(g^{-1} x\bigr)
\end{equation*}
belongs to $\eL$.
\end{lemma}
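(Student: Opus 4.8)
The plan is to show that applying $g$ permutes the defining linear conditions of $\eL$ among themselves, possibly rescaling by reindexing the transversal. Recall that $f \in \eL$ iff the sum of the values of $f$ on each left coset of $HK$ is zero, and crucially this characterization is independent of the choice of transversals (as observed via \eqref{map31}). So first I would reduce to checking the coset-sum condition for $[g,f]$: I need to verify that for every left coset $C = sHK$ of $HK$ in $G$, the sum $\sum_{x \in C, \text{ one rep per } K\text{-coset}} [g,f](x)$ vanishes. Equivalently, using \eqref{map31}, that $\sum_{x \in C} [g,f](x) = 0$ where the sum runs over a full set of $K$-coset representatives in $C$.

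The key step is the substitution: since $[g,f](x) = f(g^{-1}x)$, as $x$ ranges over a transversal of $K$-cosets inside $sHK$, the element $y = g^{-1}x$ ranges over a transversal of $K$-cosets inside $g^{-1}sHK$. Here I use that $H$ is normal in $G$, so $HK$ is a subgroup and left multiplication by $g^{-1}$ carries the left coset $sHK$ bijectively onto the left coset $g^{-1}sHK = (g^{-1}s)HK$, and this bijection sends $K$-cosets to $K$-cosets (since $g^{-1}(xK) = (g^{-1}x)K$). Therefore
\begin{equation*}
\sum_{x \in sHK} [g,f](x) = \sum_{y \in (g^{-1}s)HK} f(y),
\end{equation*}
where both sums are over one representative per $K$-coset. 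The right-hand side is zero because $f \in \eL$ and $(g^{-1}s)HK$ is a left coset of $HK$. Since $s$ was an arbitrary coset representative, $[g,f]$ satisfies the coset-sum condition for every left coset of $HK$, hence $[g,f] \in \eL$.

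I should also note at the outset that $[g,f]$ does lie in $\Z^N$ at all, i.e.\ that it is constant on left cosets of $K$: if $x' = xk$ with $k \in K$, then $[g,f](x') = f(g^{-1}xk) = f(g^{-1}x)$ since $f$ is constant on $K$-cosets and $g^{-1}x$ and $g^{-1}xk$ lie in the same $K$-coset. This is quick but worth stating since the ambient group $\Z^N$ was defined as functions constant on $K$-cosets.

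I do not anticipate a genuine obstacle here; the only place requiring care is the bookkeeping of ``one representative per $K$-coset'' when translating the sum, and making sure the normality of $H$ (hence that $HK$ is a subgroup and its left cosets are permuted by $G$) is invoked correctly rather than normality of $K$, which we do not have. An alternative, slightly more computational route would be to pick explicit transversals $\{s_i\}$ and $\{t_j\}$, write $g^{-1}s_i t_j = s_{\pi(i)} h_{ij} k_{ij}$ with $h_{ij} \in H$, $k_{ij} \in K$ for a permutation $\pi$ of $\{1,\dots,I\}$, and observe that $\{h_{ij}k_{ij} : j = 1,\dots,J\}$ runs over the $K$-cosets of $HK$ as $j$ does; then $\sum_j [g,f](s_i t_j) = \sum_j f(s_{\pi(i)} h_{ij} k_{ij}) = \sum_{j'} f(s_{\pi(i)} t_{j'}) = 0$. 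But the transversal-free argument above is cleaner, so that is the one I would write up.
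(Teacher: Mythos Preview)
Your proposal is correct and follows essentially the same approach as the paper's proof: both use the transversal-free characterization of $\eL$ via (\ref{map31}) and observe that left multiplication by $g^{-1}$ carries the left coset $s_i HK$ to the left coset $g^{-1}s_i HK$, so that $\sum_j [g,f](s_i t_j)$ equals a sum of $f$ over a coset of $HK$ and hence vanishes. Your write-up is more explicit about the bookkeeping (checking $[g,f]\in\Z^N$ and that $K$-cosets map to $K$-cosets), but the underlying argument is the same one-line computation the paper gives.
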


\begin{proof}  For each $i = 1, 2, \dots , I$ we have
\begin{equation}\label{map40}
|K| \sum_{j = 1}^J [g, f](s_i t_j) = |K| \sum_{j = 1}^J f\bigl(g^{-1} s_i t_j\bigr) = \sum_{h \in g^{-1} s_i H K} f(h).
\end{equation}
But the sum on the right of (\ref{map40}) is zero because $f$ belongs to $\eL$ and $g^{-1} s_i H K$ is a left
coset of $H K$.
\end{proof}

We now consider and solve the following problem: construct a function $\lambda$ in $\eL$ such that the subgroup
\begin{equation}\label{map45}
\langle [g, \lambda] : g \in G\rangle
\end{equation}
generated by the orbit of $\lambda$ under the action of $G$, has rank $N - I$ in $\eL$.  We define $\lambda$ by
\begin{equation}\label{map49}
\lambda(g) = \begin{cases}  J - 1&   \text{if $g$ belongs to $K$,}\\
                                        \ \ -1 & \text{if $g$ belongs to $H K$, but does not belong to $K$,}\\
                                        \ \ \ 0  & \text{if $g$ belongs to $G$, but does not belong to $H K$.}\end{cases}
\end{equation}
We will prove that for this choice of $\lambda$, the subgroup (\ref{map45}) has rank $N - I$.

If $s_i H K$ is a left coset of $H K$, but not equal to $H K$, then
\begin{equation}\label{map52}
\sum_{g \in s_i H K} \lambda(g) = 0
\end{equation}
is obvious because each term in the sum is zero.  When we sum over the subgroup $H K$ we find that
\begin{equation}\label{map54}
\sum_{g \in H K} \lambda(g) = |K| \sum_{j = 1}^J \lambda(t_j) = |K|\bigl(J - 1 - (J -1)\bigr) = 0.
\end{equation}
It follows that $\lambda$ belongs to $\eL$.  

\begin{lemma}\label{lemmap2}  Let $\lambda$ in $\eL$ be defined by {\rm (\ref{map49})}, and let $\{t_1, t_2, \dots , t_J\}$ be a 
transversal for the left cosets of $K$ in $H K$.  Then each subset of cardinality $J -1$, contained in the collection of functions
\begin{equation*}\label{map58}
\{[t_j, \lambda] : j = 1, 2, \dots , J\},
\end{equation*}
is linearly independent.
\end{lemma}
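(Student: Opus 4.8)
The plan is to compute the functions $[t_j,\lambda]$ explicitly enough to see that they span a $(J-1)$-dimensional space, with any $J-1$ of them independent. First I would record what $[t_j,\lambda]$ looks like. Since $\lambda$ is supported on the coset $HK$ (taking the value $J-1$ on $K$ and $-1$ on the other $J-1$ cosets $t_mK$ in the decomposition $HK=\bigcup_{m=1}^J t_mK$), the translate $[t_j,\lambda](x)=\lambda(t_j^{-1}x)$ is supported on the coset $t_jHK$. Because $H$ is normal, $t_j\in HK$, so $t_jHK=HK$; hence every $[t_j,\lambda]$ is again supported on $HK$, and all of them lie in the $J$-dimensional subspace $V$ of $\Z^N\otimes\R$ consisting of functions supported on $HK$ and constant on each $t_mK$. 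Writing such a function as a vector in $\R^J$ indexed by $m=1,\dots,J$, the key computation is that $[t_j,\lambda]$ corresponds to the vector whose entry in position $m$ is $J-1$ if $t_mK=t_j\cdot(\text{the coset }K)$, i.e.\ if $t_j^{-1}t_mK=K$, and $-1$ otherwise. In other words $[t_j,\lambda]$ is $J$ times the $j'$-th standard basis vector minus the all-ones vector $\bone$, where $j'$ is the index with $t_{j'}K=t_jK$; and as $j$ runs over $1,\dots,J$ the index $j'$ runs over a permutation of $1,\dots,J$, so up to reindexing we may simply say $[t_j,\lambda]\leftrightarrow J\be_j-\bone$.

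Next I would observe that these $J$ vectors $J\be_j-\bone$ all lie in the hyperplane $\{\bx\in\R^J:\sum_m x_m=0\}$ (each has coordinate sum $J - J = 0$), and that they are exactly the columns of the matrix $JI_J-\mathbf{1}$, where $\mathbf{1}$ is the all-ones $J\times J$ matrix. To finish I need to show any $J-1$ of them are linearly independent, equivalently that deleting any one column from $JI_J-\mathbf{1}$ leaves a matrix of rank $J-1$. This is precisely the situation handled by the preliminary lemmas: the matrix $A=\mathbf{1}-JI_J$ (the negative, which changes nothing for rank purposes) has row sums and column sums equal to $J - J = 0$, and its off-diagonal entries are all $1>0$ — wait, this has the wrong sign for Lemma~\ref{lemgen4}, so I would instead apply the reasoning directly: suppose $\sum_{j\ne j_0}c_j(J\be_j-\bone)=\bo$ for scalars $c_j$. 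Looking at the $j_0$-th coordinate gives $-\sum_{j\ne j_0}c_j=0$; looking at the $j$-th coordinate for $j\ne j_0$ gives $Jc_j-\sum_{j'\ne j_0}c_{j'}=Jc_j=0$, hence $c_j=0$ for all $j\ne j_0$. So the $J-1$ chosen vectors are independent.

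I expect the only real obstacle to be bookkeeping: making sure the translation action $[t_j,\lambda](x)=\lambda(t_j^{-1}x)$ genuinely permutes the coset-coordinates of $V$ as claimed, i.e.\ that $x\mapsto t_j^{-1}x$ maps the coset $t_mK$ to the coset whose representative in $\{t_1,\dots,t_J\}$ is determined by the left action of $t_j^{-1}$ on $HK/K$ — this uses normality of $H$ to ensure $t_j^{-1}\cdot$ preserves $HK$ and permutes its $K$-cosets among themselves, which is exactly the content of Lemma~\ref{lemmap1} restricted to $HK$. Once that permutation structure is in hand, the linear-algebra step above is elementary and the conclusion that every $(J-1)$-subset of $\{[t_j,\lambda]:j=1,\dots,J\}$ is linearly independent follows at once.
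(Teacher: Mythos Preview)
Your proposal is correct. Both you and the paper begin by restricting attention to the subgroup $HK$ and identifying each $[t_j,\lambda]$ with the vector $(\lambda(t_j^{-1}t_m))_{m=1}^J$ in $\R^J$; in fact your reindexing worry is unnecessary, since $t_{j'}K=t_jK$ forces $j'=j$ for a transversal, so the correspondence $[t_j,\lambda]\leftrightarrow J\be_j-\bone$ holds on the nose. The difference is in how the linear algebra is finished. The paper packages these vectors as the columns of the matrix $M=(\mu(t_j^{-1}t_i))$, checks that $M$ has zero column sums and strictly negative off-diagonal entries, and then invokes Lemma~\ref{lemgen4}(ii): any nontrivial null vector must have all coordinates of one sign, so no relation with one coefficient equal to zero is possible. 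You instead exploit the explicit form $J\be_j-\bone$ directly, reading off from the $j_0$-th coordinate that $\sum_{j\ne j_0}c_j=0$ and then from each other coordinate that $Jc_j=0$. Your route is more elementary and self-contained, avoiding the general matrix lemma altogether; the paper's route is less explicit but fits its broader framework, since Lemma~\ref{lemgen4} is the same tool used later for the matrix $M(\beta,T,\widehat{w})$ in Theorem~\ref{thmspecial1}.
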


\begin{proof}  Let $\mu$ be the function $\lambda$ with domain restricted to those left cosets of $K$ that are contained in 
$H K$, so that
\begin{equation}\label{map59}
\mu(h) = \begin{cases}  J - 1&   \text{if $h$ belongs to $K$,}\\
                                     \ \ -1 & \text{if $h$ belongs to $H K$, but does not belong to $K$.}\end{cases}
\end{equation}
For $x$ in $H K$ we have
\begin{equation*}\label{map60}
\mu\bigl(t_j^{-1} x\bigr) = \lambda\bigl(t_j^{-1} x\bigr) = [t_j, \lambda](x).
\end{equation*} 
Therefore it suffices to show that each subset of cardinality $J -1$, contained in the collection of functions
\begin{equation}\label{map61}
\big\{\mu\bigl(t_j^{-1} x\bigr) : j = 1, 2, \dots , J\big\},
\end{equation}
is linearly independent.  Here each function $x \mapsto \mu\bigl(t_j^{-1} x\bigr)$ is defined on the set of $J$
distinct left cosets of $K$ in $H K$, and these cosets are represented by the elements of the transversal
$\{t_1, t_2, \dots  t_J\}$.

Let 
\begin{equation*}\label{map62}
M = \bigl(\mu\bigl(t_j^{-1} t_i\bigr)\bigr)
\end{equation*}
be the $J \times J$ integer matrix, where $i = 1, 2, \dots , J$ indexes rows, and $j = 1, 2, \dots , J$ indexes
columns.  If $i \not= j$ then $t_j^{-1} t_i$ does not belong to $K$, and it follows from (\ref{map59}) that 
\begin{equation*}\label{map63}
\mu\bigl(t_j^{-1} t_i\bigr) = -1 < 0.
\end{equation*}
For each $j = 1, 2, \dots , J$ the elements in the set
\begin{equation*}\label{map64}
\big\{t_j^{-1} t_1, t_j^{-1} t_2, \dots , t_j^{-1} t_J\big\}
\end{equation*}
form a transversal for the left cosets of $K$ in $H K$.  Hence we get
\begin{equation*}\label{map74}
\sum_{i = 1}^J \mu\bigl(t_j^{-1} t_i\bigr) = (J - 1) - \sum_{\substack{i = 1\\i \not= j}}^J 1 = 0.
\end{equation*}
We have shown that $M$ satisfies the hypotheses (\ref{gen201}) and (\ref{gen203}) in the statement of Lemma \ref{lemgen4}.  
Hence by that result the matrix $M$ has $\Q$-rank and $\R$-rank equal to $J-1$.   If $z_1, z_2, \dots , z_J$ are integers,
not all of which are zero, such that
\begin{equation*}\label{map79}
\sum_{j = 1}^J z_j \mu\bigl(t_j^{-1} t_i\bigr) = 0,\quad\text{for each $i = 1, 2, \dots , J$},
\end{equation*}
then (ii) of Lemma \ref{lemgen4} asserts that the integers $z_j$ are all positive or all negative.  In particular, each subset of
cardinality $J - 1$, contained in the collections of functions (\ref{map61}), is linearly independent.
\end{proof}

We are now able to prove that the subgroup (\ref{map45}) has rank $N - I$.  This is contained in the following more
precise result.

\begin{lemma}\label{lemmap3}  Let $\lambda$ in $\eL$ be defined by {\rm (\ref{map49})}, let $\{s_1, s_2, \dots , s_I\}$
be a transversal for the left cosets of $H K$ in $G$, and let $\{t_1, t_2, \dots , t_J\}$ be a transversal for the left cosets
of $K$ in $H K$.  For each $i = 1, 2, \dots , I$, let
\begin{equation*}\label{map90}
\J_i \subseteq \{1, 2, \dots , J\}
\end{equation*}
be a subset of cardinality $|\J_i| = J-1$.  Then the collection of $N - I$ functions
\begin{equation}\label{map93}
\big\{[s_i t_j, \lambda] : \text{$i = 1, 2, \dots , I,$ and $j \in \J_i$}\big\} \subseteq \eL,
\end{equation}
is linearly independent.  Moreover, for each $i = 1, 2, \dots , I$, the $J - 1$ functions in the subcollection
\begin{equation*}\label{map95}
\big\{[s_i t_j, \lambda] : j \in \J_i\big\},
\end{equation*}
are supported on the left coset $s_i H K$.
\end{lemma}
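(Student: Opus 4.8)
The plan is to exploit the "support" structure that the definition of $\lambda$ in (\ref{map49}) forces on each function $[s_i t_j, \lambda]$, so that the linear independence of the full collection (\ref{map93}) decouples into $I$ separate problems, one on each coset $s_i H K$, each of which is handled by Lemma \ref{lemmap2}. So first I would establish the "moreover" clause. Fix $i$ and $j$, and consider $[s_i t_j, \lambda](x) = \lambda\bigl(t_j^{-1} s_i^{-1} x\bigr)$. By (\ref{map49}) this is nonzero only when $t_j^{-1} s_i^{-1} x$ lies in $HK$, i.e. when $s_i^{-1} x$ lies in $t_j HK = HK$ (since $t_j \in HK$ and $HK$ is a subgroup), i.e. when $x$ lies in $s_i HK$. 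Thus $[s_i t_j, \lambda]$ is supported on the single left coset $s_i HK$, which proves the last sentence of the lemma.

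Next I would set up the linear independence argument. Suppose we have integers $c_{ij}$, for $i = 1, \dots, I$ and $j \in \J_i$, such that
\begin{equation*}
\sum_{i=1}^I \sum_{j \in \J_i} c_{ij} [s_i t_j, \lambda] = 0
\end{equation*}
as a function on $G/K$. Because the cosets $s_1 HK, \dots, s_I HK$ are pairwise disjoint, and the summand indexed by $(i,j)$ is supported on $s_i HK$, restricting the identity to the coset $s_{i_0} HK$ kills all terms except those with $i = i_0$, giving $\sum_{j \in \J_{i_0}} c_{i_0 j}\, [s_{i_0} t_j, \lambda] = 0$ as a function on $s_{i_0} HK$ for each fixed $i_0$. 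So it suffices to show, for each fixed $i$, that the $J-1$ functions $\{[s_i t_j, \lambda] : j \in \J_i\}$, restricted to $s_i HK$, are linearly independent.

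To finish, I would reduce this fixed-$i$ statement to Lemma \ref{lemmap2}. The map $x \mapsto s_i x$ is a bijection from $HK$ onto $s_i HK$ carrying left cosets of $K$ to left cosets of $K$, and for $y \in HK$ we have $[s_i t_j, \lambda](s_i y) = \lambda\bigl(t_j^{-1} s_i^{-1} s_i y\bigr) = \lambda\bigl(t_j^{-1} y\bigr) = [t_j, \lambda](y)$, where on the right we may regard $[t_j, \lambda]$ as a function on the $J$ left cosets of $K$ in $HK$ (this is exactly the function $\mu\bigl(t_j^{-1} y\bigr)$ appearing in the proof of Lemma \ref{lemmap2}). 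Hence the restriction of $[s_i t_j, \lambda]$ to $s_i HK$ is, up to this fixed relabelling of cosets, precisely $[t_j, \lambda]$ on $HK$. Lemma \ref{lemmap2} says every subset of cardinality $J-1$ of $\{[t_j, \lambda] : j = 1, \dots, J\}$ is linearly independent, so in particular the subset indexed by $\J_i$ is, which forces all $c_{ij} = 0$. Collecting the $I$ conclusions shows the whole family (\ref{map93}) is independent, and since it has $I(J-1) = N - I$ members the rank statement follows.

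The only place requiring genuine care is the bookkeeping in the support argument: one must be sure that "$[s_i t_j, \lambda]$ is supported on $s_i HK$" uses both that $t_j \in HK$ (so $t_j HK = HK$) and that $HK$ is a subgroup of $G$ (established earlier via normality of $H$), and one must track that the action convention $[g, f](x) = f(g^{-1}x)$ makes the translation $x \mapsto s_i x$ intertwine $[t_j, \lambda]$ on $HK$ with $[s_i t_j, \lambda]$ on $s_i HK$ correctly — a sign/inverse slip here would break the reduction. Everything downstream is then a direct invocation of Lemma \ref{lemmap2} and the disjointness of cosets, so no further obstacle arises.
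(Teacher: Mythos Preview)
Your proposal is correct and follows essentially the same approach as the paper: establish the support claim first, use the disjointness of the cosets $s_i HK$ to decouple the linear-dependence relation into $I$ separate relations, and then reduce each to Lemma~\ref{lemmap2}. The only cosmetic difference is that the paper evaluates the $i$th relation directly at the points $s_i t_k$ for $k = 1, \dots, J$ to obtain $\sum_{j \in \J_i} z(i,j)\,\lambda(t_j^{-1} t_k) = 0$, whereas you phrase the same reduction via the bijection $y \mapsto s_i y$ from $HK$ to $s_i HK$; these are the same computation.
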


\begin{proof}   It follows from the definition (\ref{map49}) that the function $\lambda$ is supported on the subgroup
$H K$.  Hence for each $i = 1, 2, \dots , I$, the function
\begin{equation*}\label{map97}
x \mapsto [s_i t_j, \lambda](x) = \lambda\bigl((s_i t_j)^{-1} x\bigr)
\end{equation*}
is supported on the left coset 
\begin{equation*}\label{map103}
s_i t_j H K = s_i H K.
\end{equation*}
This verifies the last statement in the lemma.  

Let
\begin{equation*}\label{map107}
\{z(i, j) : \text{$i = 1, 2, \dots , I$ and $j \in \J_i$}\}
\end{equation*}
be a collection of integers, not all of which are zero.  Assume that the function
\begin{equation*}\label{map112}
x \mapsto \sum_{i = 1}^I \sum_{j \in \J_i} z(i, j) \lambda\bigl((s_i t_j)^{-1} x\bigr)
\end{equation*}
is identical zero on $G$.  For each $i = 1, 2, \dots , I$ the function
\begin{equation}\label{map117}
x \mapsto \sum_{j \in \J_i} z(i, j) \lambda\bigl((s_i t_j)^{-1} x\bigr)
\end{equation}
is supported on the left coset $s_i H K$, and the left cosets
\begin{equation*}\label{map121}
s_i H K,\quad\text{where $i = 1, 2, \dots , I$},
\end{equation*}
are obviously disjoint.  Hence each function (\ref{map117}) is identically zero on $G$.

Because $\{t_1, t_2, \dots , t_J\}$ is a transversal for the left cosets of $K$ in $H K$, for $i = 1, 2, \dots , I$,
each left coset of $K$ in $s_i H K$ is represented by a unique element in the set
\begin{equation*}\label{map129}
\{s_i t_1, s_i t_2, \dots , s_i t_J\}.
\end{equation*}
It follows that for each $i = 1, 2, \dots, I$, we have
\begin{equation}\label{map133}
0 = \sum_{j \in \J_i} z(i, j) \lambda\bigl((s_i t_j)^{-1} s_i t_k\bigr) = \sum_{j \in \J_i} z(i, j) \lambda\bigl(t_j^{-1} t_k\bigr)
\end{equation}
for each $k = 1, 2, \dots , J$.  But $\J_i$ has cardinality $J - 1$, and therefore the identity (\ref{map133})
contradicts the statement of Lemma \ref{lemmap2}.  We  conclude that the integers
$z(i, j)$ do not exist.  That is, the functions in the collection (\ref{map93}) are linearly independent.

As the functions in the collection (\ref{map93}) are linearly independent, they are distinct, and the cardinality of the 
collection (\ref{map93}) is
\begin{equation*}\label{map137}
\sum_{i = 1}^I |\J_i| = I(J - 1) = IJ - I = N - I.
\end{equation*}
This proves the lemma.
\end{proof}

%%%%%%%%%%%%%%%%%%%%%%%%%%%%%%%%%%%%%%%%%%%%%%%%%%%%%%%%%
\section{The Galois action on places}

We assume that $l$ and $k$ are algebraic number fields such that
\begin{equation*}\label{int0}
\Q \subseteq k \subseteq l.
\end{equation*}
At each place $v$ of $k$ we write $k_v$ for the completion of $k$ at $v$, so that $k_v$ is a local field.  We
select two absolute values $\|\ \|_v$ and $|\ |_v$ from the place $v$.  The absolute value $\|\ \|_v$ extends the usual archimedean 
or non-archimedean absolute value on the subfield $\Q$.  Then $|\ |_v$ must be a power of $\|\ \|_v$, and we set
\begin{equation}\label{int1}
|\ |_v = \|\ \|_v^{d_v/d},
\end{equation}
where $d_v = [k_v : \Q_v]$ is the local degree of the extension, and $d = [k : \Q]$ is the global degree.  In a similar manner
we write $w$ for a place of $l$, $l_w$ for the completion of $l$ at $w$, and we normalize two absolute values $\|\ \|_w$
and $|\ |_w$ from the place $w$ in a similar manner.  We write $w|v$ when $\|\ \|_w$ extends the absolute value
$\|\ \|_v$ from $k$ to $l$.  Then we write $W_v(l/k)$ for the finite set of all places $w$ of $l$ such that $w|v$.

With these normalizations the height of an algebraic number $\alpha \not= 0$ that belongs to $l$ is given by
\begin{equation}\label{int3}
h(\alpha) = \sum_w \log^+ |\alpha|_w = \hh \sum_w \bigl|\log |\alpha|_w\bigr|.
\end{equation}
Each sum in (\ref{int3}) is over the set of all places $w$ of $l$, and the equality between the two sums follows 
from the product formula.  Then $h(\alpha)$ depends on the algebraic number $\alpha \not= 0$, but it does not depend on 
the number field $l$ that contains $\alpha$.  We have already noted in (\ref{sum5}) that the height is well defined as a map
\begin{equation*}\label{int4}
h : \G_l \rightarrow [0, \infty).
\end{equation*}

If $l/k$ is a finite, Galois extension then the Galois group $\Aut(l/k)$ acts transitively on the set $W_v(l/k)$ of places $w$ of $l$ 
that lie above a fixed place $v$ of $k$ (see Tate \cite{JTT}).  If $\sigma$ is 
an element of $\Aut(l/k)$ and $w$ is a place of $l$, then $\sigma w$ is the unique place of $l$ that satisfies the identity
\begin{equation}\label{tate-1}
\|\sigma^{-1}(\gamma)\|_w = \|\gamma\|_{\sigma w}
\end{equation} 
for each $\gamma$ in $l$.  Because $\sigma$ fixes elements of $k$, we find that the restriction of (\ref{tate-1}) to
$k$ is equal to the restriction of $\|\ \|_w$ to $k$.  That is, $\sigma w$ and $w$ are both places in the set $W_v(l/k)$.
For a Galois extension all local degrees over a fixed place of $k$ are equal.  Alternatively,
the map $w \mapsto [l_w : k_v]$ is constant on places $w$ that belong to $W_v(l/k)$.  This observation easily implies that
\begin{equation}\label{tate0}
|\sigma^{-1}(\gamma)|_w = |\gamma|_{\sigma w}
\end{equation} 
for each $\gamma$ in $l$.  

Now assume that $l/\Q$ is a finite, Galois extension and $k$ is an intermediate field.  We write 
\begin{equation*}\label{tate2}
G = \Aut(l/\Q),\quad\text{and}\quad H = \Aut(l/k),
\end{equation*}
so that $H \subseteq G$ is a subgroup of $G$, and $H$ is the group of automorphisms attached to the Galois extension $l/k$. 
If $w$ is a place of $l$, if $v$ is a place of $k$ such that $w|v$, if $l_w$ is the completion of $l$ at $w$, and $k_v$ is the completion 
of $k$ at $v$, then $l_w/k_v$ is a Galois extension.  It can be shown (see Tate \cite{JTT})
that the Galois group $\Aut(l_w/k_v)$ is isomorphic to the stabilizer
\begin{equation*}\label{tate3}
H_w = \{\sigma \in H : \sigma w = w\}.
\end{equation*} 
As the completion of an archimedean local field is either $\R$ or $\C$, it follows that each stabilizer $H_w$ is either trivial,
or is cyclic of order $2$.  More precisely, we have
\begin{align*}\label{tate4}
\begin{split}
|H_w| &= 1\quad\text{if and only if either $l_w \cong k_v \cong \R$ for all $w$ with $w|v$,}\\
	  &\qquad\qquad\qquad\text{or $l_w \cong k_v \cong \C$ for all $w$ with $w|v$,}
\end{split}
\end{align*}
and
\begin{equation*}\label{tate5}
|H_w| = 2\quad\text{if and only if both $k_v \cong \R$ and $l_w \cong \C$ for all $w$ with $w|v$.}
\end{equation*}
Of course the same remark applies to the stabilizer
\begin{equation*}\label{tate6}
G_w = \{\sigma \in G : \sigma w = w\},
\end{equation*}
but now $\Q$ has one archimedean place, and $\Q_{\infty} \cong \R$.  We find that
\begin{equation}\label{tate7}
|G_w| = 1\quad\text{if and only if $l_w \cong \R$ for all $w$ with $w|\infty$,}
\end{equation}
and
\begin{equation}\label{tate8}
|G_w| = 2\quad\text{if and only if $l_w \cong \C$ for all $w$ with $w|\infty$.}
\end{equation}
Clearly (\ref{tate7}) occurs when $l/\Q$ is a totally real Galois extension, and (\ref{tate8}) occurs when $l/\Q$ is
a totally complex Galois extension.

Let $\widehat{w}$ be a particular archimedean place of $l$.  As before we write
\begin{equation*}\label{tate30}
G_{\widehat{w}} = \{\sigma \in G : \sigma \widehat{w} = \widehat{w}\},
\end{equation*}
for the stabilizer of $\widehat{w}$.  We have $|G_{\widehat{w}}| = [l_{\widehat{w}} : \Q_{\widehat{w}}]$, and therefore
\begin{equation*}\label{tate32}
|G_{\widehat{w}}| = \begin{cases} 1&\text{if $l/\Q$ is totally real,}\\
                                                      2&\text{if $l/\Q$ is totally complex.}\end{cases}
\end{equation*}
Write $[G : G_{\widehat{w}}] = N$, so that $N$ is the number of archimedean places of $l$.
Let $\tau_1, \tau_2, \dots , \tau_N$ be a complete 
set of distinct representatives for the left cosets of the subgroup $G_{\widehat{w}}$.  Then
\begin{equation*}\label{tate39}
\big\{\tau_n \widehat{w} : n = 1, 2, \dots , N\big\} = W_{\infty}(l/\Q)
\end{equation*}
is a complete set of distinct archimedean places of $l$.  To verify this, observe that
if $\tau_m \widehat{w} = \tau_n \widehat{w}$, then $\tau_m^{-1} \tau_n$ belongs to $G_{\widehat{w}}$.
and it follows that $\tau_n$ is an element of the coset $\tau_m G_{\widehat{w}}$.  This is impossible because 
$\tau_1, \tau_2, \dots , \tau_N$ is a complete set of distinct representatives for the left cosets of $G_{\widehat{w}}$.
Therefore we have
\begin{equation}\label{tate41}
\big\{|\ |_{\tau_n \widehat{w}} : n = 1, 2, \dots , N\big\} = \big\{|\ |_w : w \in W_{\infty}(l/\Q)\big\}.
\end{equation}

%%%%%%%%%%%%%%%%%%%%%%%%%%%%%%%%%%%%%%%%%%%%%%%%%%%%%%%%%%%%%%%%
\section{Special Minkowski units}\label{SMU}

In this section we assume that $l/\Q$ is a finite Galois extension, and we write $F_l$ for the free group (\ref{first10}) of positive rank.
We say that $\beta$ in $F_l$ is a {\it special Minkowski unit} if there exists an archimedean place $\widehat{w}$ of $l$ such that
\begin{equation}\label{intro1}
\log |\beta|_w < 0\quad\text{for all archimedean places $w$ such that $w \not= \widehat{w}$}. 
\end{equation}
If $\beta$ is a special Minkowski unit with respect to $\widehat{w}$, then by the product formula we have
\begin{equation*}\label{intro2}
0 = \sum_{w | \infty} \log |\beta|_w = \log |\beta|_{\widehat{w}} + \sum_{\substack{w | \infty\\ w \not= \widehat{w}}} \log |\beta|_w,
\end{equation*}
and therefore
\begin{equation*}\label{intro3}
0 < \log |\beta|_{\widehat{w}}.
\end{equation*}
If $\beta_1$ and $\beta_2$ are both special Minkowski units with respect to $\widehat{w}$, then it is trivial that
the product $\beta_1 \beta_2$ is also a special Minkowski unit with respect to $\widehat{w}$.  More generally, for
each archimedean place $\widehat{w}$ we define
\begin{equation*}\label{intro4}
\M_{\widehat{w}} = \{\alpha \in F_l : \text{$\log |\alpha|_w < 0$ for all $w | \infty$ such that $w \not= \widehat{w}$}\},
\end{equation*}
so that $\M_{\widehat{w}}$ is the set of all special Minkowski units with respect to $\widehat{w}$.  Then each subset  
$\M_{\widehat{w}}$ is clearly a multiplicative semi-group in $F_l$.  Later we will show that if $\aA \subseteq F_l$ is
a subgroup of maximal rank, then
\begin{equation}\label{intro5}
\aA \cap \M_{\widehat{w}}
\end{equation}
is not empty.

\begin{lemma}\label{lemspecial1}  Let $\beta$ be an element of $F_l$.  Then the following conditions are equivalent.
\begin{itemize}
\item[(i)]  The element $\beta$ is a special Minkowski unit with respect to the archimedean place $\widehat{w}$.
\item[(ii)]  If $T = \{\tau_1, \tau_2, \dots , \tau_N\}$ is a transversal for the left cosets of the subgroup $G_{\widehat{w}}$, then
\begin{equation}\label{intro7}
\log \bigl| \tau_m^{-1} \tau_n (\beta)\bigr|_{\widehat{w}} < 0,\quad\text{whenever $m \not= n$}.
\end{equation}
\end{itemize}
\end{lemma}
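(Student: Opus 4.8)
The plan is to prove the equivalence (i) $\Leftrightarrow$ (ii) by unwinding the definition of a special Minkowski unit through the dictionary between archimedean places of $l$ and left cosets of the stabilizer $G_{\widehat w}$, which was set up at the end of Section~4. Recall from there that if $T=\{\tau_1,\dots,\tau_N\}$ is a transversal for the left cosets of $G_{\widehat w}$ in $G=\Aut(l/\Q)$, then $\{\tau_n\widehat w : n=1,\dots,N\}$ is exactly the set $W_\infty(l/\Q)$ of archimedean places of $l$, and by \eqref{tate41} the absolute values $|\ |_{\tau_n\widehat w}$ run over all archimedean absolute values of $l$. The key computational tool is the Galois action on absolute values, namely \eqref{tate0}, which gives $|\sigma^{-1}(\gamma)|_w = |\gamma|_{\sigma w}$ for every $\gamma\in l$ and every $\sigma\in G$.

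The main step is the following identity: for any $\beta\in F_l$ and any indices $m,n$,
\begin{equation*}
\log\bigl|\tau_m^{-1}\tau_n(\beta)\bigr|_{\widehat w} = \log\bigl|\beta\bigr|_{\tau_n^{-1}\tau_m\widehat w} = \log\bigl|\beta\bigr|_{\tau_m^{-1}\tau_n\cdot\widehat w}?
\end{equation*}
— I must be careful here with the direction of the action. Applying \eqref{tate0} with $\sigma = \tau_n^{-1}\tau_m$ (so $\sigma^{-1} = \tau_m^{-1}\tau_n$) and $\gamma = \beta$, $w = \widehat w$, we get
\begin{equation*}
\bigl|\tau_m^{-1}\tau_n(\beta)\bigr|_{\widehat w} = \bigl|\sigma^{-1}(\beta)\bigr|_{\widehat w} = \bigl|\beta\bigr|_{\sigma\widehat w} = \bigl|\beta\bigr|_{\tau_n^{-1}\tau_m\widehat w}.
\end{equation*}
So the quantity in \eqref{intro7} equals $\log|\beta|_w$ where $w = \tau_n^{-1}\tau_m\widehat w$. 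Now $w=\widehat w$ precisely when $\tau_n^{-1}\tau_m\in G_{\widehat w}$, i.e.\ when $\tau_m$ and $\tau_n$ lie in the same left coset of $G_{\widehat w}$, which (since $T$ is a transversal) happens exactly when $m=n$. Hence as $(m,n)$ ranges over all ordered pairs with $m\neq n$, the place $w=\tau_n^{-1}\tau_m\widehat w$ ranges over archimedean places of $l$ different from $\widehat w$, and conversely every archimedean place $w\neq\widehat w$ arises: writing $w=\tau_i\widehat w$ with $i$ uniquely determined and $i\neq$ the index of the coset $G_{\widehat w}$ itself, one chooses $\tau_m,\tau_n$ appropriately so that $\tau_n^{-1}\tau_m\widehat w = w$. (One clean way: fix $\tau_n$ to be the representative of the trivial coset and let $\tau_m$ run over the rest, or more symmetrically observe that $\{\tau_n^{-1}\tau_m\widehat w : m\neq n\}$ covers all of $W_\infty(l/\Q)\setminus\{\widehat w\}$ for each fixed $n$ by transitivity of the left-multiplication action on cosets.)

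With this identity in hand, both implications are immediate. If (i) holds, then $\log|\beta|_w<0$ for every archimedean $w\neq\widehat w$, so in particular for $w=\tau_n^{-1}\tau_m\widehat w$ whenever $m\neq n$, giving \eqref{intro7}. Conversely, if (ii) holds, then for any archimedean place $w\neq\widehat w$ we pick $m\neq n$ with $\tau_n^{-1}\tau_m\widehat w = w$ and read off $\log|\beta|_w<0$, which is (i). The only real subtlety — the part I expect to need the most care — is bookkeeping the direction of the Galois action (left cosets vs.\ right, $\sigma$ vs.\ $\sigma^{-1}$) so that \eqref{tate0} is applied correctly and the surjectivity claim ``every $w\neq\widehat w$ is of the form $\tau_n^{-1}\tau_m\widehat w$ with $m\neq n$'' is justified from the transitivity of $G$ on $W_\infty(l/\Q)$ established via Tate's theorem in Section~4; everything else is a direct substitution.
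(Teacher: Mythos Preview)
Your proposal is correct and follows essentially the same route as the paper: both arguments reduce the statement to the identity $\bigl|\tau_m^{-1}\tau_n(\beta)\bigr|_{\widehat w}=\bigl|\beta\bigr|_{\tau_n^{-1}\tau_m\widehat w}$ obtained from \eqref{tate0}, together with the observation that $\tau_n^{-1}\tau_m\widehat w=\widehat w$ if and only if $m=n$. Your surjectivity step in the (ii)$\Rightarrow$(i) direction---noting that for fixed $n$ the set $\{\tau_n^{-1}\tau_m : m=1,\dots,N\}$ is again a left transversal for $G_{\widehat w}$ and hence $\{\tau_n^{-1}\tau_m\widehat w : m\neq n\}=W_\infty(l/\Q)\setminus\{\widehat w\}$---is a slightly cleaner variant of the paper's argument, which instead produces a single pair $(m,n)$ by first choosing $\eta\in G$ with $\eta\widehat w=w$ and then matching $\eta G_{\widehat w}$ to a coset representative.
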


\begin{proof}  Assume that (i) holds.  If $m \not= n$ we have 
$\tau_m G_{\widehat{w}} \not= \tau_n G_{\widehat{w}}$, and therefore
\begin{equation*}\label{intro9}
w = \bigl(\tau_m^{-1} \tau_n\bigr)^{-1} \widehat{w} = \tau_n^{-1} \tau_m \widehat{w} \not= \widehat{w}.
\end{equation*}
Then it follows from (\ref{intro1}) that
\begin{equation*}\label{intro11}
\log \bigl| \tau_m^{-1} \tau_n (\beta)\bigr|_{\widehat{w}} = \log |\beta|_w < 0,
\end{equation*}
which verifies (ii).

Assume that (ii) holds.  We recall that $G$ acts transitively on the collection $W_{\infty}(l/\Q)$ of archimedean places of $l$.  If $\eta$ in
$G$ satisfies $\eta \widehat{w} = w$, then we have
\begin{equation*}\label{intro13}
\{\sigma \in G : \sigma \widehat{w} = w\} = \eta G_{\widehat{w}}.
\end{equation*}
As $\{\tau_1, \tau_2, \dots , \tau_N\}$ is a transversal for the left cosets of the subgroup $G_{\widehat{w}}$, there exists a 
a pair $(m, n)$ such that 
\begin{equation*}\label{intro15}
\bigl(\tau_m^{-1} \tau_n\bigr)^{-1} G_{\widehat{w}} = \eta G_{\widehat{w}}.
\end{equation*}
Therefore we have
\begin{equation*}\label{intro17}
\bigl(\tau_m^{-1} \tau_n\bigr)^{-1} \widehat{w} = w.
\end{equation*}
If $m \not= n$, then $w \not= \widehat{w}$, and (\ref{intro7}) implies that
\begin{equation*}\label{intro21}
\log |\beta|_w = \log \bigl| \tau_m^{-1} \tau_n (\beta)\bigr|_{\widehat{w}}  < 0.
\end{equation*}
It follows that $\beta$ is a special Minkowski unit with respect to $\widehat{w}$.
\end{proof}

The situation is further clarified by the following basic result.

\begin{lemma}\label{lemspecial2}  Let $\beta$ in $F_l$ be a special Minkowski unit with respect to the 
archimedean place $\widehat{w}$.
\begin{itemize}
\item[(i)]  If $T = \{\tau_1, \tau_2, \dots , \tau_N\}$ is a transversal for the left cosets of the subgroup 
$G_{\widehat{w}}$, then each subset of
\begin{equation}\label{special8}
\{\tau_1(\beta), \tau_2(\beta), \dots , \tau_N(\beta)\}
\end{equation}
with cardinality $N-1$, is multiplicatively independent in $F_l$.
\item[(ii)]  The number $\beta$ is a Minkowski unit.
\end{itemize}
\end{lemma}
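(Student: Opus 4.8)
The plan is to establish part (i) by reducing the multiplicative independence of a size-$(N-1)$ subset of $\{\tau_1(\beta),\dots,\tau_N(\beta)\}$ to the nonsingularity of a submatrix of a suitable $N\times N$ matrix built from the archimedean absolute values, and then to apply Lemma~\ref{lemgen4}. Concretely, I would form the matrix $A=\bigl(a_{mn}\bigr)$ with
\begin{equation*}
a_{mn} = \log\bigl|\tau_m^{-1}\tau_n(\beta)\bigr|_{\widehat w},
\end{equation*}
which by~(\ref{tate0}) equals $\log|\tau_n(\beta)|_{\tau_m\widehat w}$. By~(\ref{tate41}) the places $\tau_1\widehat w,\dots,\tau_N\widehat w$ are exactly the archimedean places of $l$, so row $m$ of $A$ records the logarithms $\log|\tau_n(\beta)|$ at the place $\tau_m\widehat w$. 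The product formula (applied to each $\tau_n(\beta)$, whose archimedean contributions already sum to zero since units have no finite valuations) gives $\sum_{m=1}^N a_{mn}=0$ for every $n$; that is hypothesis~(\ref{gen201}). The defining property~(\ref{intro7}) of a special Minkowski unit, supplied by Lemma~\ref{lemspecial1}(ii), says precisely $a_{mn}<0$ for $m\neq n$, which is hypothesis~(\ref{gen203}). Thus $A$ satisfies the conditions of Lemma~\ref{lemgen4}.

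**Next I would** read off the conclusions of Lemma~\ref{lemgen4}. Part~(iii) of that lemma tells us that for every pair $(m,n)$ the submatrix $A_{(m,n)}$ is nonsingular; in particular, deleting any one row and any one column leaves an invertible $(N-1)\times(N-1)$ matrix. Fixing the deleted column index $n_0$, this means the $N-1$ columns of $A$ indexed by $\{1,\dots,N\}\setminus\{n_0\}$, restricted to any $N-1$ of the rows, are linearly independent over $\R$. Translating back: the $N-1$ vectors
\begin{equation*}
\bigl(\log|\tau_n(\beta)|_w\bigr)_{w\mid\infty},\qquad n\neq n_0,
\end{equation*}
are $\R$-linearly independent in the ``logarithmic embedding'' space $\R^N$. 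By the standard criterion underlying Dirichlet's unit theorem — a finite set of units is multiplicatively independent in $F_l$ iff its logarithmic embedding vectors are $\R$-linearly independent — this gives that $\{\tau_n(\beta):n\neq n_0\}$ is multiplicatively independent, which is exactly~(i). Since $n_0$ was arbitrary, every $(N-1)$-subset of~(\ref{special8}) works.

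**For part (ii)**, I would deduce that $\beta$ is a Minkowski unit from~(i). The subset $\{\tau_1(\beta),\dots,\tau_N(\beta)\}$ of~(\ref{special8}) consists of Galois conjugates $\sigma(\beta)$ for the $N$ coset representatives $\sigma=\tau_n$, and by~(i) some $N-1$ of them are multiplicatively independent in $F_l$. The group $F_l$ has rank $r=r(l)$, and here I would invoke the relation between $N$ and $r$: when $l/\Q$ is totally real, $N=[l:\Q]=r+1$, so $N-1=r$; when $l/\Q$ is totally complex, $N=\tfrac12[l:\Q]$ and $r=N-1$ as well. In either case $N-1=r$, so we have exhibited $r$ multiplicatively independent conjugates of $\beta$, whence the subgroup $\langle\sigma(\beta):\sigma\in\Aut(l/\Q)\rangle$ has rank $r$, i.e.\ $\beta$ is a Minkowski unit.

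**The main obstacle** I anticipate is the bookkeeping connecting the abstract setup of Lemma~\ref{lemgen4} to the arithmetic: one must be careful that the matrix entries really do satisfy both column-sum-zero (requiring the product formula together with the fact that $\beta$ is a unit, so all non-archimedean absolute values are $1$) and the strict negativity off the diagonal (requiring Lemma~\ref{lemspecial1}, and in particular that the $\tau_n$ are genuinely distinct coset representatives so that $m\neq n$ forces $\tau_m^{-1}\tau_n\notin G_{\widehat w}$, hence $\tau_n^{-1}\tau_m\widehat w\neq\widehat w$). The identification $N-1=r$ in the totally complex case also deserves a line, since there $N$ counts archimedean places, each of which is complex. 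Once these normalizations are pinned down, both parts follow immediately from the preliminary lemmas.
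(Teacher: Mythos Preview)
Your proposal is correct and follows essentially the same route as the paper: form the $N\times N$ matrix $M(\beta,T,\widehat{w})=(\log|\tau_m^{-1}\tau_n(\beta)|_{\widehat w})$, verify the hypotheses of Lemma~\ref{lemgen4} via the product formula and Lemma~\ref{lemspecial1}, and read off (i) from the nonsingularity of every $(N-1)\times(N-1)$ submatrix, with (ii) immediate since $r(l)=N-1$. The only cosmetic difference is that the paper phrases the conclusion as $\Q$-linear independence of the column vectors, whereas you pass through $\R$-linear independence of the logarithmic embedding; both are equivalent here.
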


\begin{proof}  Define the $N \times N$ real matrix
\begin{equation}\label{special9}
M(\beta, T, \widehat{w}) = \bigl(\log |\tau_m^{-1} \tau_n(\beta)|_{\widehat{w}}\bigr),
\end{equation}
where $m = 1, 2, \dots , N$ indexes rows, and $n = 1, 2, \dots , N$ indexes columns.  Because $T$ is a transversal
for the left cosets of $G_{\widehat{w}}$, we have
\begin{equation*}\label{special11}
\big\{\tau_m \widehat{w} : m = 1, 2, \dots N\big\} = W_{\infty}(l/\Q).
\end{equation*}
Therefore the matrix $M(\beta, T, \widehat{w})$ satisfies
\begin{align}\label{special13}
\begin{split}
\sum_{m = 1}^N \log |\tau_m^{-1} \tau_n(\beta)|_{\widehat{w}} 
	&= \sum_{m = 1}^N \log |\tau_n(\beta)|_{\tau_m \widehat{w}}\\
	&= \sum_{w | \infty} \log |\tau_n(\beta)|_w\\
	&= 0
\end{split}
\end{align}
by the product formula.  If $m \not= n$, then it follows from Lemma \ref{lemspecial2} that
\begin{equation}\label{special17}
\log  |\tau_m^{-1} \tau_n(\beta)|_{\widehat{w}} < 0.
\end{equation}
The identity (\ref{special13}) and the inequality (\ref{special17}) verify the hypotheses (\ref{gen201}) and (\ref{gen203}) in the 
statement of Lemma \ref{lemgen4}.  We conclude that the $\Q$-rank and the $\R$-rank of $M(\beta, T, \widehat{w})$ are both 
equal to $N-1$, and each $(N - 1) \times (N -1)$ submatrix of $M(\beta, T, \widehat{w})$ is nonsingular.  The set of columns of 
the matrix $M(\beta, T, \widehat{w})$ is
\begin{equation*}\label{special19}
\big\{\bigl(\log |\tau_m^{-1} \tau_n(\beta)|_{\widehat{w}}\bigr) : n = 1, 2, \dots , N\big\}
	= \big\{\bigl(\log |\tau_n(\beta)|_w\bigr) : n = 1, 2, \dots , N\big\},
\end{equation*}
and we conclude that each subset of $N-1$ distinct columns is $\Q$-linearly independent. This clearly implies the conclusion
(i) in the statement of the lemma, and (ii) follows immediately.
\end{proof}

If the Galois extension $l/\Q$ is totally real, then the subgroup $G_{\widehat{w}}$ is trivial, and the transversal $T$
that occurs in the proof of Lemma \ref{lemspecial1} is the group $G$.  In this case we have the identity (\ref{special13})
for column sums, and the corresponding identity 
\begin{align}\label{special25}
\begin{split}
\sum_{n = 1}^N \log |\tau_m^{-1} \tau_n(\beta)|_{\widehat{w}} 
	&= \sum_{n = 1}^N \log |\tau_n(\beta)|_{\tau_m \widehat{w}}\\
	&= \log\bigl| \norm_{l/\Q}(\beta)\big|_{\tau_m \widehat{w}}\\
	& = 0
\end{split}	
\end{align}
for row sums.  It will be useful to work in an analogous situation when $l/\Q$ is a totally complex Galois extension.

\begin{lemma}\label{lemspecial3}  Let $l/\Q$ be a totally complex Galois extension, $\widehat{w}$ an archimedean
place of $l$, and let $\beta$ be a special Minkowski unit with respect to $\widehat{w}$.  Write
\begin{equation}\label{special31}
G_{\widehat{w}} = \{1, \rho\},\quad\text{where $\rho^2 = 1$}.
\end{equation}
Then both $\rho(\beta)$ and $\beta \rho(\beta)$ are special Minkowski units with respect to $\widehat{w}$.  Moreover, if
$T = \{\tau_1, \tau_2, \dots , \tau_N\}$ is a transversal for the left cosets of the subgroup $G_{\widehat{w}}$, then we have
\begin{equation}\label{special33}
\sum_{n = 1}^N \log|\tau_m^{-1} \tau_n(\beta \rho(\beta))|_{\widehat{w}} = 0
\end{equation}
for each $m = 1, 2, \dots , N$.
\end{lemma}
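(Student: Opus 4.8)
The plan is to prove the three assertions in turn, each by reducing to a property already established for special Minkowski units with respect to $\widehat{w}$. First I would handle $\rho(\beta)$: since $\rho$ lies in $G_{\widehat{w}}$, it fixes $\widehat{w}$, and for any archimedean place $w$ the identity $|\rho(\beta)|_w = |\beta|_{\rho^{-1}w}$ from (\ref{tate0}) shows that $\rho$ permutes the archimedean places while fixing $\widehat{w}$. Hence $\log|\rho(\beta)|_w = \log|\beta|_{\rho^{-1}w} < 0$ for every archimedean $w \neq \widehat{w}$, because $\rho^{-1}w = \rho w \neq \widehat{w}$ exactly when $w \neq \widehat{w}$ (as $\rho$ fixes $\widehat{w}$ and is a bijection). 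So $\rho(\beta) \in \M_{\widehat{w}}$. Then, because $\M_{\widehat{w}}$ is a multiplicative semi-group (as observed just before Lemma \ref{lemspecial1}), the product $\beta\rho(\beta)$ is also in $\M_{\widehat{w}}$, i.e., a special Minkowski unit with respect to $\widehat{w}$.

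Next I would establish the row-sum identity (\ref{special33}). The cleanest route is to recognize $\beta\rho(\beta)$ as $\norm_{l/k_0}(\beta)$ where $k_0$ is the fixed field of $G_{\widehat{w}} = \{1,\rho\}$, but more directly one can just compute. For fixed $m$, using (\ref{tate0}) we have $\log|\tau_m^{-1}\tau_n(\beta\rho(\beta))|_{\widehat{w}} = \log|\tau_n(\beta\rho(\beta))|_{\tau_m\widehat{w}} = \log|\tau_n(\beta)|_{\tau_m\widehat{w}} + \log|\tau_n\rho(\beta)|_{\tau_m\widehat{w}}$. Now as $n$ ranges over $1,\dots,N$, the places $\tau_n\widehat{w}$ run over all archimedean places of $l$ (since $T$ is a transversal of $G_{\widehat{w}}$), and likewise $\tau_n\rho$ ranges over a full transversal of $G_{\widehat{w}}$ on the right (because $\rho \in G_{\widehat{w}}$, so $\{\tau_n\rho\}$ is again a complete set of left coset representatives), so the places $\tau_n\rho\,\widehat{w}$ also run over all archimedean places. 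Therefore
\begin{equation*}
\sum_{n=1}^N \log|\tau_m^{-1}\tau_n(\beta\rho(\beta))|_{\widehat{w}} = \sum_{w|\infty}\log|\beta|_{\tau_m w'} + \sum_{w|\infty}\log|\beta|_{w''}
\end{equation*}
where each sum on the right is, after re-indexing, simply $\sum_{w|\infty}\log|\beta|_w = 0$ by the product formula (exactly as in (\ref{special13}) and (\ref{special25})). Hence the total is $0$.

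The main obstacle, modest as it is, lies in being careful about the two different ways $\rho$ acts: on the left (permuting the coset $\tau_n G_{\widehat{w}}$, used to show $\{\tau_n\rho\}$ is again a transversal) versus via the Galois action on places in (\ref{tate0}) (which involves $\rho^{-1} = \rho$). One must check that $\{\tau_1\rho,\dots,\tau_N\rho\}$ really is a complete set of left coset representatives of $G_{\widehat{w}}$ — this holds because right multiplication by the fixed element $\rho$ is a bijection of $G$ that sends $\tau_m G_{\widehat{w}}$ to $\tau_m\rho G_{\widehat{w}} = \tau_m G_{\widehat{w}}$ only if $\rho \in G_{\widehat{w}}$; but here we use instead that right multiplication by $\rho$ permutes the full set of left cosets, hence the union $\bigcup_n \tau_n\rho\,\widehat{w}$ still equals $W_\infty(l/\Q)$. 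Alternatively, and perhaps most transparently, one writes $\beta\rho(\beta) = \Norm_{l/l^{G_{\widehat{w}}}}(\beta)$ and invokes the transitivity of $\Aut(l/l^{G_{\widehat{w}}})$ on the places above each place of $l^{G_{\widehat{w}}}$, but the elementary coset bookkeeping above suffices and keeps the proof self-contained. Everything else is a direct application of (\ref{tate0}), the semi-group property of $\M_{\widehat{w}}$, and the product formula.
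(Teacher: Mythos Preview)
Your argument that $\rho(\beta)$ is a special Minkowski unit is correct and in fact more direct than the paper's: the paper first establishes the matrix identity $M(\beta, T\rho, \widehat{w}) = M(\rho(\beta), T, \widehat{w})$ and then invokes the transversal characterization of special Minkowski units, whereas you simply check the defining inequality (\ref{intro1}) using (\ref{tate0}) and the fact that $\rho$ fixes $\widehat{w}$. Both you and the paper then use the semigroup property of $\M_{\widehat{w}}$ for the product $\beta\rho(\beta)$.

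Your row-sum computation, however, has a genuine gap. After splitting
\[
\sum_{n=1}^N \log\bigl|\tau_m^{-1}\tau_n(\beta\rho(\beta))\bigr|_{\widehat{w}}
   = \sum_{n=1}^N \log|\tau_n(\beta)|_{\tau_m\widehat{w}} + \sum_{n=1}^N \log|\tau_n\rho(\beta)|_{\tau_m\widehat{w}},
\]
you assert that each half is, ``after re-indexing, simply $\sum_{w|\infty}\log|\beta|_w = 0$.'' This is not justified and is false in general: the first half equals $\sum_n \log|\beta|_{\tau_n^{-1}\tau_m\widehat{w}}$, and for the places $\tau_n^{-1}\tau_m\widehat{w}$ to run once through $W_\infty(l/\Q)$ one would need $\{\tau_n^{-1}\tau_m\}$ to be a left transversal for $G_{\widehat{w}}$, which does not follow from $\{\tau_n\}$ being one. (Take $G=S_3$, $G_{\widehat{w}}=\langle(12)\rangle$, $T=\{e,(13),(23)\}$, $m=2$: then $\tau_1^{-1}\tau_2\widehat{w}=(13)\widehat{w}$ and $\tau_3^{-1}\tau_2\widehat{w}=(132)\widehat{w}=(13)\widehat{w}$ coincide.) The paper instead observes that $G=T\cup T\rho$ is a disjoint union, so the two halves \emph{together} give
\[
\sum_{\sigma\in G}\log|\sigma(\beta)|_{\tau_m\widehat{w}}=\log\bigl|\Norm_{l/\Q}(\beta)\bigr|_{\tau_m\widehat{w}}=0.
\]
You already have this ingredient in hand (you note $\{\tau_n\rho\}$ is again a transversal, and you mention the $\Norm_{l/l^{G_{\widehat{w}}}}$ viewpoint), so the fix is minor; but as written the step does not go through.
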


\begin{proof}  We define the $N \times N$ real matrix
\begin{equation*}\label{special39}
M(\beta, T, \widehat{w}) = \bigl(\log |\tau_m^{-1} \tau_n(\beta)|_{\widehat{w}}\bigr),
\end{equation*}
where $m = 1, 2, \dots , N$ indexes rows, and $n = 1, 2, \dots , N$ indexes columns.  As in our proof of Lemma \ref{lemspecial2}, 
the matrix $M(\beta, T, \widehat{w})$ satisfies (\ref{special13}) and (\ref{special17}).  Using (\ref{special31}) we find that
\begin{equation*}\label{special41}
T \rho = \{\tau_1 \rho, \tau_2 \rho, \dots , \tau_N \rho\}
\end{equation*}
is a second transversal for the left cosets of $G_{\widehat{w}}$.  Hence the matrix
\begin{equation*}\label{special42}
M(\beta, T \rho, \widehat{w}) = \bigl(\log \bigl|(\tau_m \rho)^{-1} \tau_n \rho(\beta)\bigr|_{\widehat{w}}\bigr),
\end{equation*}
where $m = 1, 2, \dots , N$ indexes rows and $n = 1, 2, \dots , N$ indexes columns, also satisfies the identity
\begin{align}\label{special43}
\begin{split}
\sum_{m = 1}^N \log \bigl|(\tau_m \rho)^{-1} \tau_n \rho(\beta)\bigr|_{\widehat{w}}
	&= \sum_{m = 1}^N \log |\tau_n \rho(\beta)|_{\tau_m \rho \widehat{w}}\\
	&= \sum_{w | \infty} \log |\tau_n \rho(\beta)|_w\\
	&= 0,
\end{split}
\end{align}
and the inequality
\begin{equation*}\label{special47}
\log \bigl|(\tau_m \rho)^{-1} \tau_n \rho(\beta)\bigr|_{\widehat{w}} < 0.
\end{equation*}
Because $\rho$ belongs to the stabilizer $G_{\widehat{w}}$, we have $\rho \widehat{w} = \widehat{w}$.  Therefore
the $(m, n)$ entry in the matrix $M(\beta, T \rho, \widehat{w})$ is
\begin{align}\label{special53}
\begin{split}
\log \bigl|(\tau_m \rho)^{-1} \tau_n \rho(\beta)\bigr|_{\widehat{w}}
	&= \log |\tau_n \rho(\beta)|_{\tau_m \rho \widehat{w}}\\
	&= \log |\tau_n \rho(\beta)|_{\tau_m \widehat{w}}\\
	&= \log \bigl|\tau_m^{-1} \tau_n (\rho(\beta))\bigr|_{\widehat{w}}.
\end{split}
\end{align}
The identity (\ref{special53}) implies that
\begin{equation*}\label{special55}
M(\beta, T \rho, \widehat{w}) = M(\rho(\beta), T, \widehat{w}).
\end{equation*}
As $T$ is an arbitrary left transversal for $G_{\widehat{w}}$, it follows from Lemma \ref{lemspecial2}
that $\rho(\beta)$ is a special Minkowski unit with respect to the place $\widehat{w}$.  We have shown that both
$\beta$ and $\rho(\beta)$ belong to the semigroup $\M_{\widehat{w}}$ of special Minkowski units.  Therefore 
the product $\beta \rho(\beta)$ is a special Minkowski unit.

Because
\begin{equation*}\label{special58}
G = \{\tau_1, \tau_2, \dots , \tau_N\} \cup \{\tau_1 \rho, \tau_2 \rho, \dots , \tau_N \rho\} = T \cup T \rho,
\end{equation*}
we find that
\begin{align*}\label{special63}
\begin{split}
\sum_{n = 1}^N \log|\tau_m^{-1} \tau_n(\beta \rho(\beta)|_{\widehat{w}} 
	&= \sum_{n = 1}^N \log |\tau_n(\beta \rho(\beta)|_{\tau_m \widehat{w}}\\
	&= \sum_{n = 1}^N \log |\tau_n(\beta)|_{\tau_m \widehat{w}} 
		+ \sum_{n = 1}^N \log|\tau_n \rho(\beta)|_{\tau_m \widehat{w}}\\
	&= \log |\norm_{l/\Q}(\beta)|_{\tau_m \widehat{w}}\\
	&= 0.
\end{split}
\end{align*}
This proves (\ref{special33}).
\end{proof}

Let $\aA \subseteq F_l$ be a subgroup with maximal rank,
and let $\widehat{w}$ be an archimedean place of $l$.  We now prove that the subgroup $\aA$ contains a special Minkowski
unit $\beta$ with respect to the place $\widehat{w}$.  It follows from Lemma \ref{lemspecial2} that $\beta$ is a Minkowski unit in 
$\aA$.  We construct $\beta$ so that the Weil height of $\beta$ is comparable with the sum of the 
heights of a basis for the subgroup $\aA$.  We also give a bound on the index of the subgroup generated by the conjugates 
of $\beta$ in the full group of units $F_l$.

\begin{theorem}\label{thmspecial1}  Let $l/\Q$ be a Galois extension with $N$ archimedean places, and let $\widehat{w}$ 
be a particular archimedean place of $l$.  Let $\eta_1, \eta_2, \dots , \eta_{N-1}$ be multiplicatively independent units 
in $F_l$, and write
\begin{equation*}\label{new108}
\aA = \langle \eta_1, \eta_2, \dots , \eta_{N-1}\rangle \subseteq F_l
\end{equation*}
for the subgroup of rank $N - 1$ that they generate.  Then there exists 
a special Minkowski unit $\beta$ with respect to $\widehat{w}$ that satisfies the following conditions.
\begin{itemize}
\item[(i)]  The unit $\beta$ belongs to $\aA$.
\item[(ii)]  The height of $\beta$ is bounded by
\begin{equation}\label{new110}
h(\beta) \le 2 \sum_{n=1}^{N-1} h(\eta_n).
\end{equation}
\item[(iii)]  If $T = \{\tau_1, \tau_2, \dots , \tau_N\}$ is a transversal for the left cosets of the subgroup 
$G_{\widehat{w}}$, then the $N \times N$ matrix
\begin{equation}\label{new112}
M(\beta, T, \widehat{w}) = \bigl(\log |\tau_m^{-1} \tau_n(\beta)|_{\widehat{w}}\bigr),
\end{equation}
where $m = 1, 2, \dots , N$ indexes rows and $n = 1, 2, \dots , N$ indexes columns, has $\Q$-rank and $\R$-rank equal to $N-1$.
\item[(iv)]  Each $(N-1) \times (N-1)$ submatrix of $M(\beta, T, \widehat{w})$ is nonsingular.
\item[(v)]  If $\bwy \not= \bo$ is a point in $\R^N$ such that
\begin{equation*}\label{new114}
0 = \sum_{n = 1}^N y_n \log |\tau_n(\beta)|_w,\quad\text{for each place $w$ in $W_{\infty}(l/\Q)$},
\end{equation*}
then the co-ordinates $y_1, y_2, \dots , y_N$ are all positive, or all negative.
\item[(vi)]  The subgroup
\begin{equation*}\label{new118}
\bB = \langle \tau_1(\beta), \tau_2(\beta), \dots , \tau_N(\beta)\rangle \subseteq F_l,
\end{equation*}
generated by the conjugate units has rank $N - 1$, and index bounded by
\begin{equation}\label{new120}
\Reg(l) [F_l : \bB] \le \bigl([l : \Q] h(\beta)\bigr)^{N-1},
\end{equation}
where $\Reg(l)$ is the regulator of $l$.
\end{itemize}
\end{theorem}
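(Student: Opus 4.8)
The plan is to produce $\beta$ explicitly as a monomial in the given units $\eta_1,\dots,\eta_{N-1}$, and then to read off all six conclusions from Section~2 and the lemmas already established in Sections~4 and~5. First I would note that for a Galois extension $l/\Q$ the unit rank is $N-1$ (whether $l/\Q$ is totally real, where $N=[l:\Q]$, or totally complex, where $N=[l:\Q]/2$), so $\aA$ has maximal rank $N-1$; by Dirichlet's unit theorem the vectors $\bigl(\log|\eta_n|_w\bigr)_{w\mid\infty}$ are then $\R$-linearly independent and span the hyperplane $\{\bx\in\R^N:\sum_w x_w=0\}$. Fixing the archimedean place $\widehat{w}$, I would form the $(N-1)\times(N-1)$ real matrix $A=\bigl(a_{wn}\bigr)$ with rows indexed by the archimedean places $w\neq\widehat{w}$ and $a_{wn}=\log|\eta_n|_w$; by the previous remark and the product formula the projection forgetting the $\widehat{w}$-coordinate is injective on that hyperplane, so $A$ is nonsingular. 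Lemma~\ref{lemgen1} then supplies $\bxi\in\Z^{N-1}$ with
\begin{equation*}
0<\sum_{n=1}^{N-1}a_{wn}\xi_n\le\sum_{n=1}^{N-1}|a_{wn}|\qquad\text{for every archimedean $w\neq\widehat{w}$,}
\end{equation*}
and I would set $\beta=\prod_{n=1}^{N-1}\eta_n^{-\xi_n}\in\aA$, so that $\log|\beta|_w=-\sum_n\xi_n\log|\eta_n|_w<0$ at every archimedean $w\neq\widehat{w}$. Thus $\beta$ is a special Minkowski unit with respect to $\widehat{w}$, proving~(i).

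For~(ii), since $\beta$ is a unit, (\ref{int3}) gives $2h(\beta)=\sum_{w\mid\infty}\bigl|\log|\beta|_w\bigr|$. The displayed bound controls $\bigl|\log|\beta|_w\bigr|$ by $\sum_n\bigl|\log|\eta_n|_w\bigr|$ for $w\neq\widehat{w}$, while the product formula gives $\bigl|\log|\beta|_{\widehat{w}}\bigr|=\sum_{w\neq\widehat{w}}\bigl|\log|\beta|_w\bigr|$; adding these and interchanging summations yields
\begin{equation*}
2h(\beta)\le 2\sum_{n=1}^{N-1}\sum_{w\mid\infty}\bigl|\log|\eta_n|_w\bigr|=2\sum_{n=1}^{N-1}2h(\eta_n),
\end{equation*}
which is~(\ref{new110}).

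Conclusions~(iii)--(v) would follow exactly as in the proof of Lemma~\ref{lemspecial2}: because $\beta$ is a special Minkowski unit with respect to $\widehat{w}$, Lemma~\ref{lemspecial1} makes the off-diagonal entries of $M(\beta,T,\widehat{w})$ in~(\ref{new112}) strictly negative, and the product formula makes its columns sum to zero, so $M(\beta,T,\widehat{w})$ satisfies hypotheses~(\ref{gen201}) and~(\ref{gen203}) of Lemma~\ref{lemgen4}. Part~(i) of that lemma gives~(iii), part~(iii) gives~(iv), and part~(ii) gives~(v) after invoking $\log|\tau_m^{-1}\tau_n(\beta)|_{\widehat{w}}=\log|\tau_n(\beta)|_{\tau_m\widehat{w}}$ (from~(\ref{tate0})) and $\{\tau_m\widehat{w}:m=1,\dots,N\}=W_{\infty}(l/\Q)$.

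Finally, for~(vi): Lemma~\ref{lemspecial2}(i) gives that every $(N-1)$-element subset of $\{\tau_1(\beta),\dots,\tau_N(\beta)\}$ is multiplicatively independent, so $\bB$ has rank $N-1$ and $\bB'=\langle\tau_1(\beta),\dots,\tau_{N-1}(\beta)\rangle\subseteq\bB$ is free on these $N-1$ generators. By the standard identification of $\Reg(l)\,[F_l:\bB']$ with the covolume of the image of $\bB'$ under the archimedean logarithmic embedding (well-defined independently of which place is omitted, by the product formula), this equals $\bigl|\det M_{(N,N)}\bigr|$, the $(N,N)$-minor of $M(\beta,T,\widehat{w})$, which is nonsingular by~(iv). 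The Euclidean norm of its $n$th column is at most $\sum_{m=1}^{N-1}\bigl|\log|\tau_n(\beta)|_{\tau_m\widehat{w}}\bigr|\le\sum_{w\mid\infty}\bigl|\log|\tau_n(\beta)|_w\bigr|=2h(\tau_n(\beta))=2h(\beta)$, so Hadamard's inequality gives $\bigl|\det M_{(N,N)}\bigr|\le\bigl(2h(\beta)\bigr)^{N-1}\le\bigl([l:\Q]h(\beta)\bigr)^{N-1}$ because $[l:\Q]\ge 2$; together with $[F_l:\bB]\le[F_l:\bB']$ this is~(\ref{new120}). I expect the only real obstacle to be bookkeeping the regulator identity in~(vi) precisely (the normalization of $\Reg(l)$ and its independence from the omitted place); the rest is a direct assembly of the earlier results.
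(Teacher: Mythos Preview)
Your construction of $\beta$ via Lemma~\ref{lemgen1} and your derivation of (i)--(v) via Lemma~\ref{lemgen4} are exactly the paper's argument; there is nothing to add there.

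The gap is in (vi). Your identification $\Reg(l)\,[F_l:\bB']=\bigl|\det M_{(N,N)}\bigr|$ is off by a normalization factor. The entries of $M(\beta,T,\widehat{w})$ are $\log|\tau_n(\beta)|_w$ in the paper's normalization $|\ |_w=\|\ \|_w^{d_w/d}$, so $d_w\log\|\tau_n(\beta)\|_w=[l:\Q]\log|\tau_n(\beta)|_w$ for every archimedean $w$ (all local degrees being equal in a Galois extension). The standard regulator is computed from the matrix with entries $d_w\log\|\varepsilon_j\|_w$, so the correct identity is
\[
\Reg(l)\,[F_l:\bB']=[l:\Q]^{N-1}\bigl|\det M_{(N,N)}\bigr|.
\]
Once you insert this factor, your Hadamard bound $\bigl|\det M_{(N,N)}\bigr|\le(2h(\beta))^{N-1}$ yields only
\[
\Reg(l)\,[F_l:\bB]\le\Reg(l)\,[F_l:\bB']\le\bigl(2[l:\Q]\,h(\beta)\bigr)^{N-1},
\]
which is weaker than (\ref{new120}) by $2^{N-1}$; the trick of absorbing the $2$ into $[l:\Q]$ no longer works because the $[l:\Q]^{N-1}$ is already spent on the normalization. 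The paper does not attempt this directly: it simply invokes \cite[Theorem~1.1]{akhtari2015}, whose proof uses Schinzel's determinant inequality rather than Hadamard's to gain exactly this factor of $2$ per column. So your caveat at the end is on target, but the obstacle is not merely bookkeeping---Hadamard is genuinely too weak here, and you need the sharper external input.
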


\begin{proof}  Let 
\begin{equation*}
A = \bigl(\log |\eta_n|_w\bigr)
\end{equation*}
be the $(N-1) \times (N-1)$ real matrix, where $w|\infty$ with $w \not= \widehat{w}$ indexes rows, and 
$n = 1, 2, \dots , N-1$ indexes columns.  By Lemma \ref{lemgen1} there exists a point $\bxi$ in $\Z^{N-1}$ such that
\begin{equation}\label{new121}
0 < \sum_{n = 1}^{N-1} \xi_n \log |\eta_n|_w \le \sum_{n = 1}^{N-1} \bigl|\log |\eta_n|_w\bigr|
\end{equation}
for each archimedean place $w$ of $l$ with $w \not= \widehat{w}$.  Then it is obvious that $\bxi \not= \bo$.  Let
\begin{equation*}\label{new123}
\beta^{-1} = \prod_{n = 1}^{N-1} \eta_n^{\xi_n},
\end{equation*}
so that $\beta \not= 1$, and $\beta$ is an element of the subgroup $\aA$.  In view of (\ref{new121}) we have
\begin{equation}\label{new125}
- \sum_{n = 1}^{N-1} \bigl|\log |\eta_n|_w\bigr| \le - \sum_{n = 1}^{N-1} \xi_n \log |\eta_n|_w = \log |\beta|_w < 0
\end{equation}
at each archimedean place $w$ of $l$ with $w \not= \widehat{w}$.  From the product formula we get
\begin{align}\label{new127}
\begin{split}
0 < \log |\beta|_{\widehat{w}} = - \sum_{\substack{w|\infty\\w \not= \widehat{w}}} \log |\beta|_{w}
                                   \le \sum_{n = 1}^{N-1} \sum_{\substack{w|\infty\\w \not= \widehat{w}}} \bigl|\log|\eta_n|_w\bigr|.
\end{split}                                   
\end{align}
This leads to the estimate
\begin{align}\label{new128}
\begin{split}
2 h(\beta) &= \bigl|\log |\beta|_{\widehat{w}}\bigr| + \sum_{\substack{w|\infty\\w \not= \widehat{w}}} \bigl|\log |\beta|_{w}\bigr|\\
		&= \log |\beta|_{\widehat{w}} - \sum_{\substack{w|\infty\\w \not= \widehat{w}}} \log |\beta|_{w}\\
		&\le 2 \sum_{n = 1}^{N-1} \sum_{\substack{w|\infty\\w \not= \widehat{w}}} \bigl|\log|\eta_n|_w\bigr|\\
		&\le 4 \sum_{n=1}^{N-1} h(\eta_n),
\end{split}
\end{align}
which verifies (\ref{new110}).

Because of the identity (\ref{tate0}) we have
\begin{equation}\label{new129}
M(\beta, T, \widehat{w}) = \bigl(\log |\tau_n(\beta)|_{\tau_m \widehat{w}}\bigr) 
						= \bigl(\log \bigl|\tau_m^{-1}\bigl(\tau_n(\beta)\bigr)\bigr|_{\widehat{w}}\bigr),
\end{equation}
where $m = 1, 2, \dots , N$ indexes rows and $n = 1, 2, \dots , N$ indexes columns.  The identity (\ref{new129})
determines an ordering for the archimedean places $w$ that index the rows of $M(\beta, T, \widehat{w})$.  But the 
choice of ordering does not effect the rank of $M(\beta, T, \widehat{w})$.  If $m \not= n$ then 
\begin{equation*}\label{new130}
\bigl(\tau_m^{-1} \tau_n\bigr)^{-1} = \tau_n^{-1} \tau_m
\end{equation*}
is {\it  not} in the subgroup $G_{\widehat{w}}$ that fixes $\widehat{w}$.  It follows from (\ref{new125}) that
\begin{equation}\label{new131}
\log \bigl|\tau_m^{-1}\bigl(\tau_n(\beta)\bigr)\bigr|_{\widehat{w}} < 0
\end{equation}
whenever $m \not= n$.  We also get
\begin{align}\label{new133}
\begin{split}
\sum_{m = 1}^N \log \bigl|\tau_m^{-1}\bigl(\tau_n(\beta)\bigr)\bigr|_{\widehat{w}} 
			    &= \sum_{m = 1}^N \log |\tau_n(\beta)|_{\tau_m\widehat{w}}\\
			    &= \sum_{w|\infty} \log |\tau_n(\beta)|_w\\
			    &= 0,
\end{split}
\end{align}
by appealing to (\ref{tate41}) and the product formula.  It follows from (\ref{new131}) and (\ref{new133}) that
the matrix $M(\beta, T, \widehat{w})$ satisfies the hypotheses of Lemma \ref{lemgen4}.  Hence 
$M(\beta, T, \widehat{w})$ also satisfies the conclusions of Lemma \ref{lemgen4}, and this verifies (iii), (iv), and (v).

We have shown that the column vectors
\begin{equation*}\label{new144}
\big\{\bigl(\log |\tau_n(\beta)|_w\bigr) : n = 1, 2, \dots , N\big\} \subseteq \R^N
\end{equation*}
generate a subgroup of rank $N-1$.   Hence the subgroup
\begin{equation*}\label{new149}
\bB = \langle \tau_n(\beta) : n = 1, 2, \dots , N\rangle \subseteq F_l,
\end{equation*}
generated by their inverse image in $F_l$, also has rank $N-1$.  Then the inequality (\ref{new120}) follows from 
\cite[Theorem 1.1]{akhtari2015}, and the observation that the map
\begin{equation*}\label{new153}
n \mapsto h\bigl(\tau_n(\beta)\bigr)
\end{equation*}
is constant.
\end{proof}

If $N$ is the number of archimedean places of the Galois extension $l/\Q$, then the rank of the group $F_l$ is
$r(l) = N - 1$.  Hence Theorem \ref{maintheorem1} follows immediately from (i), (ii), and (vi), in the statement 
of Theorem \ref{thmspecial1}.  

%%%%%%%%%%%%%%%%%%%%%%%%%%%%%%%%%%%%%%%%%%%%%%%%%%%%%%%%%%%%%%%%
\section{Relative units I: Definitions}\label{RUI}

Throughout this section we suppose that  $k$ and $l$ are algebraic number fields with 
\begin{equation*}\label{unit2}
\Q \subseteq k \subseteq l.  
\end{equation*}
We write $r(k)$ for the rank of the unit group $O_k^{\times}$, and $r(l)$ for the rank of the unit group $O_l^{\times}$.  
Then $k$ has $r(k) + 1$ archimedean places, and $l$ has $r(l) + 1$ archimedean places.  In general we have 
$r(k) \le r(l)$, and we recall (see \cite[Proposition 3.20]{narkiewicz2010}) that $r(k) = r(l)$ if and only if $l$ is a $\CM$-field, 
and $k$ is the maximal totally real subfield of $l$.  

The norm is a homomorphism of multiplicative groups
\begin{equation*}\label{unit2.7}
\Norm_{l/k} : l^{\times} \rightarrow k^{\times}.
\end{equation*}
If $v$ is a place of $k$, then each element $\alpha$ in $l^{\times}$ satisfies the identity
\begin{equation}\label{unit4}
[l : k] \sum_{w|v} \log |\alpha|_w = \log |\Norm_{l/k}(\alpha)|_v,
\end{equation}
where the absolute values $|\ |_v$ and $|\ |_w$ are normalized as in (\ref{int1}).
It follows from (\ref{unit4}) that the norm, restricted to the subgroup $O_l^{\times}$ of units, is a homomorphism
\begin{equation*}\label{unit6}
\Norm_{l/k} : O_l^{\times} \rightarrow O_k^{\times},
\end{equation*}
and the norm, restricted to the torsion subgroup in $O_l^{\times}$, is also a homomorphism
\begin{equation*}\label{unit8}
\Norm_{l/k} : \Tor\bigl(O_l^{\times}\bigr) \rightarrow \Tor\bigl(O_k^{\times}\bigr).
\end{equation*}
Therefore we get a well defined homomorphism, which we write as
\begin{equation*}\label{unit10}
\norm_{l/k} : O_l^{\times}/\Tor\bigl(O_l^{\times}\bigr) \rightarrow O_k^{\times}/\Tor\bigl(O_k^{\times}\bigr),
\end{equation*}
and define by
\begin{equation}\label{unit11}
\norm_{l/k}\bigl(\alpha \Tor\bigl(O_l^{\times}\bigr)\bigr) = \Norm_{l/k}(\alpha) \Tor\bigl(O_k^{\times}\bigr).
\end{equation}
However, to simplify notation we write
\begin{equation}\label{unit12}
F_k = O_k^{\times}/\Tor\bigl(O_k^{\times}\bigr),\quad\text{and}\quad F_l = O_l^{\times}/\Tor\bigl(O_l^{\times}\bigr),
\end{equation}
so that
\begin{equation}\label{unit13}
\norm_{l/k} : F_l \rightarrow F_k.
\end{equation}
We also write the elements of the quotient groups $F_k$ and $F_l$ as coset representatives, rather than as cosets.
And by abuse of language, we continue to refer to the elements of $F_k$ and $F_l$ as units. 
Obviously $F_k$ and $F_l$ are free abelian groups of rank $r(k)$ and $r(l)$, respectively.  As
\begin{equation*}\label{unit17}
O_k^{\times} \subseteq O_l^{\times},
\end{equation*}  
we can identify $F_k$ with the subgroup
\begin{equation*}\label{unit19}
O_k^{\times}/\Tor\bigl(O_l^{\times}\bigr),
\end{equation*}
and in this way regard $F_k$ as a subgroup of $F_l$.  We also note that (\ref{unit4}) and (\ref{unit13}) imply that
\begin{equation}\label{exunit0}
[l : k] \sum_{w | v} \log |\alpha|_w = \log |\norm_{l/k}(\alpha)|_v
\end{equation}
for each place $v$ of $k$ and each point $\alpha$ in $F_l$.

Following Costa and Friedman \cite{costa1991} and \cite{costa1993}, the subgroup of relative units in $O_l^{\times}$ is defined by
\begin{equation*}\label{unit20}
\big\{\alpha \in O_l^{\times} : \Norm_{l/k}(\alpha) \in \Tor\bigl(O_k^{\times}\bigr)\big\}.
\end{equation*}
Here we work in the free group $F_l$ where the image of the subgroup of relative units is the kernel of the homomorphism 
$\norm_{l/k}$.  Therefore we define the subgroup of {\it relative units} in $F_l$ to be the subgroup
\begin{equation}\label{unit21}
E_{l/k} = \big\{\alpha \in F_l : \norm_{l/k}(\alpha) = 1\big\}.
\end{equation}
We also write
\begin{equation}\label{unit24}
I_{l/k} = \big\{\norm_{l/k}(\alpha) : \alpha \in F_l\big\} \subseteq F_k
\end{equation}
for the image of the homomorphism $\norm_{l/k}$.  If $\beta$ in $F_l$ represents a coset in the subgroup $F_k$, then 
we have
\begin{equation*}\label{unit26.7}
\norm_{l/k}(\beta) = \beta^{[l : k]}.
\end{equation*}  
Therefore the image $I_{l/k} \subseteq F_k$ is a subgroup of rank $r(k)$, and the index satisfies
\begin{equation}\label{unit27}
[F_k : I_{l/k}] < \infty.
\end{equation}
It follows that $E_{l/k} \subseteq F_l$ is a subgroup of rank $r(l/k) = r(l) - r(k)$.  We restrict our attention to extensions 
$l/k$ such that 
\begin{equation}\label{unit29}
1 \le r(k) < r(l).
\end{equation}
The inequality $1 \le r(k)$ implies that $k$ is not $\Q$, and $k$ is not an imaginary, quadratic extension of $\Q$.  
And the inequality $r(k) < r(l)$ implies that $l$ is not a $\CM$-field such that $k$ is the maximal totally real subfield 
of $l$.  Alternatively, the hypothesis (\ref{unit29}) implies that $E_{l/k}$ is a proper subgroup of $F_l$.  As $F_k$ is 
a free group, it follows that the kernel $E_{l/k}$ of the homomorphism (\ref{unit13}) is a direct sum in $F_l$.

%%%%%%%%%%%%%%%%%%%%%%%%%%%%%%%%%%%%%%%%%%%%%%%%%%%%%%%%%%%%%%%%%
\section{Relative Units II: $H$ is normal in $G$}\label{RUII}

We continue to assume that
\begin{equation}\label{relunit501}
\Q \subseteq k \subseteq l,
\end{equation}
that these fields satisfy the inequality (\ref{unit29}), and we also assume that both $l/\Q$ and $k/\Q$ are finite, Galois extensions.  
We write
\begin{equation*}\label{relunit503}
H = \Aut(l/k),\quad\text{and}\quad G = \Aut(l/\Q),
\end{equation*}
but now $H$ is a normal subgroup of $G$.  Therefore we have the canonical homomorphism
\begin{equation}\label{relunit504}
\vphi : G \rightarrow G/H,
\end{equation}
and the isomorphism
\begin{equation}\label{relunit505}
G/H \cong \Aut(k/\Q).
\end{equation}
The isomorphism (\ref{relunit505}) is the map that restricts the domain of a coset representative in $G/H$ to
the subfield $k$.  

\begin{lemma}\label{lemfour1}  Assume that the number fields {\rm (\ref{relunit501})} are such that both $l/\Q$ and 
$k/\Q$ are finite, Galois extensions.  Then for each place $u$ of $\Q$ the map
\begin{equation}\label{relunit507}
v \mapsto \bigl|W_v(l/k)\bigr|
\end{equation}
is constant on the collection of places $v$ in $W_u(k/\Q)$, and the group $G$ acts transitively on the collection of disjoint subsets
\begin{equation}\label{relunit509}
\big\{W_v(l/k) : v \in W_u(k/\Q)\big\}.
\end{equation}
Moreover, for each automorphism $\tau$ in $G$ and each place $v$ of $k$, this action satisfies the identity
\begin{equation}\label{relunit511}
\tau W_v(l/k) = W_{\eta v}(l/k),
\end{equation}
where $\vphi$ is the canonical homomorphism {\rm (\ref{relunit504})} and $\eta = \vphi(\tau)$.
\end{lemma}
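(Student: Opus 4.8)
The plan is to reduce everything to the well-known transitivity of the Galois action on places in a Galois extension, applied twice in a compatible way. First I would recall the setup: since $l/\Q$ is Galois, $G = \Aut(l/\Q)$ acts transitively on the places $w$ of $l$ lying above a fixed place $u$ of $\Q$, and since $l/k$ is Galois, $H = \Aut(l/k)$ acts transitively on the places of $l$ lying above a fixed place $v$ of $k$. The key structural fact is that, for an automorphism $\tau \in G$, the restriction of the absolute value $\|\ \|_{\tau w}$ to $k$ depends only on the coset $\tau H$ (equivalently, only on $\eta = \vphi(\tau) \in \Aut(k/\Q)$), because if $\tau' = \tau h$ with $h \in H$, then $h$ fixes $k$ pointwise, so $\tau' w$ and $\tau w$ restrict to the same place of $k$. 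This is exactly what identifies the induced action of $\eta$ on places of $k$ with the usual Galois action of $\Aut(k/\Q) \cong G/H$.

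Next I would establish the identity (\ref{relunit511}) directly. If $w \in W_v(l/k)$, i.e. $w$ lies above $v$, then for $\tau \in G$ the place $\tau w$ lies above the place $\eta v$ of $k$, where $\eta = \vphi(\tau)$: indeed, for $\gamma \in k$ we have $\|\gamma\|_{\tau w} = \|\tau^{-1}\gamma\|_w$ by (\ref{tate-1}), and $\tau^{-1}\gamma \in k$, so the restriction of $\|\ \|_{\tau w}$ to $k$ is the place $\eta v$ (using that $\eta$ acting on places of $k$ is, by definition via the restriction isomorphism, $v \mapsto$ the place with $\|\gamma\|_{\eta v} = \|\eta^{-1}\gamma\|_v = \|\tau^{-1}\gamma\|_v$). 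Hence $\tau W_v(l/k) \subseteq W_{\eta v}(l/k)$, and applying the same reasoning to $\tau^{-1}$ gives the reverse inclusion, so $\tau W_v(l/k) = W_{\eta v}(l/k)$.

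From (\ref{relunit511}) the remaining assertions follow quickly. For the constancy of $v \mapsto |W_v(l/k)|$ on $W_u(k/\Q)$: given two places $v_1, v_2 \in W_u(k/\Q)$, transitivity of $\Aut(k/\Q)$ on places above $u$ gives $\eta \in \Aut(k/\Q)$ with $\eta v_1 = v_2$; lifting $\eta$ to some $\tau \in G$ via the surjection $\vphi$, the identity $\tau W_{v_1}(l/k) = W_{v_2}(l/k)$ shows these finite sets have the same cardinality since $\tau$ acts bijectively on places of $l$. For transitivity of $G$ on the family (\ref{relunit509}): given $v_1, v_2 \in W_u(k/\Q)$, choose $\tau$ as above so that $\tau$ maps $W_{v_1}(l/k)$ onto $W_{v_2}(l/k)$; and the subsets in (\ref{relunit509}) are pairwise disjoint because a place $w$ of $l$ lies above exactly one place of $k$. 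The disjoint union $\bigcup_{v \in W_u(k/\Q)} W_v(l/k)$ is precisely $W_u(l/\Q)$, consistent with $G$ permuting the whole collection.

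I expect no serious obstacle here; the only point requiring care is pinning down the definition of the action of $\eta \in G/H \cong \Aut(k/\Q)$ on places of $k$ and checking it is well-defined on cosets — i.e. that $\tau w|_k$ depends only on $\tau H$ — which is where normality of $H$ (so that $G/H$ is a group acting naturally) and the fact that $H$ fixes $k$ pointwise both enter. Once that bookkeeping is set up, the proof is a short diagram-chase combining (\ref{tate-1}), (\ref{tate0}), and the two transitivity statements.
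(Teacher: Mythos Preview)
Your proof is correct and follows essentially the same computation as the paper for the key identity (\ref{relunit511}): both show $\tau W_v(l/k) \subseteq W_{\eta v}(l/k)$ by restricting $\|\ \|_{\tau w}$ to $k$ and using that $\tau^{-1}$ restricted to $k$ equals $\eta^{-1}$.

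The one genuine difference is in the logical order. The paper first proves the constancy of $v \mapsto |W_v(l/k)|$ independently, using the local degree identity $[l_w : \Q_u] = [l_w : k_v][k_v : \Q_u]$ together with the fact that both $l/\Q$ and $k/\Q$ Galois forces constancy of $w \mapsto [l_w : \Q_u]$ and $v \mapsto [k_v : \Q_u]$; it then uses this cardinality equality to upgrade the inclusion $\tau W_v(l/k) \subseteq W_{\eta v}(l/k)$ to an equality. You instead obtain the reverse inclusion directly by applying the same argument to $\tau^{-1}$, and only afterward read off the constancy of $|W_v(l/k)|$ as a corollary of (\ref{relunit511}). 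Your route is slightly more economical, since it avoids the local degree computation entirely; the paper's route has the minor byproduct of the explicit formula $[l : k] = [l_w : k_v]\,|W_v(l/k)|$, though this is not used elsewhere.
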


\begin{proof}  Let $u$ be a fixed place of $\Q$.  Then for each place $v$ in $W_u(k/\Q)$ and each place $w$ in $W_v(l/k)$, we have
\begin{equation}\label{relunit514}
[l_w : \Q_u] = [l_w : k_v] [k_v : \Q_u].
\end{equation}
As $l/\Q$ is Galois, the map
\begin{equation*}\label{relunit516}
w \mapsto [l_w : \Q_u]
\end{equation*}
is constant for $w$ in $W_u(l/\Q)$.  Similarly, the extension $k/\Q$ is Galois and therefore the map
\begin{equation*}\label{relunit518}
v \mapsto [k_v : \Q_u]
\end{equation*}
is constant for $v$ in $W_u(k/\Q)$.  It follows from these observations and the identity (\ref{relunit514}), that
\begin{equation}\label{relunit520}
(v, w) \mapsto [l_w : k_v]
\end{equation}
is constant for pairs $(v, w)$ such that $v$ belongs to $W_u(k/\Q)$ and $w$ belongs to $W_v(l/k)$.
Next we recall that the global degree of the extension $l/k$ is the sum of local degrees, so that for
each place $v$ in $W_u(l/k)$ we have
\begin{equation}\label{relunit513}
[l : k] = \sum_{w|v}~[l_w : k_v].
\end{equation}
Then using (\ref{relunit520}) we get the identity
\begin{equation}\label{relunit517}
[l : k] = [l_w : k_v] \bigl|W_v(l/k)\bigr|
\end{equation}
for each pair $(v, w)$ such that $v$ belongs to $W_u(k/\Q)$ and $w$ belongs to $W_v(l/k)$. 
Now (\ref{relunit520}) and (\ref{relunit517}) imply that the map (\ref{relunit507}) is constant for $v$
in $W_u(k/\Q)$.

If $u$ is a place of $\Q$, we have the disjoint union
\begin{equation}\label{relunit525}
W_u(l/\Q) = \bigcup_{v|u} W_v(l/k),
\end{equation}
where the union on the right of (\ref{relunit525}) is over the collection of places $v$ in $W_u(k/\Q)$.  Let $\tau$ belong
to $G$ and let $\vphi(\tau) = \eta$, where $\eta$ is an automorphism in $\Aut(k/\Q)$, and let $w$ be a place
in $W_v(l/k))$.  Then for each point $\beta$ in $k$ we have
\begin{equation*}\label{relunit527}
\|\beta\|_w = \|\beta\|_v,
\end{equation*}
and, as $k/\Q$ is Galois, we have
\begin{equation*}\label{relunit529}
\tau^{-1}(\beta) = \eta^{-1}(\beta)
\end{equation*}
in $k$.  Therefore, we get
\begin{equation}\label{relunit531}
\|\beta\|_{\tau w} = \|\tau^{-1}(\beta)\|_w = \|\eta^{-1}(\beta)\|_v = \|\beta\|_{\eta v}
\end{equation}
at each point $\beta$ in $k$.  The identity (\ref{relunit531}) implies that $\tau w | \eta v$.  As $w$ in $W_v(l/k)$ was arbitrary,
we have
\begin{equation}\label{relunit532}
\tau W_v(l/k) \subseteq W_{\eta v}(l/k).
\end{equation}
It follows from (\ref{relunit507}) that $W_v(l/k)$ and $W_{\eta v}(l/k)$ have the same cardinality.  And it is trivial to check
that $\tau W_v(l/k)$ and $W_v(l/k)$ have the same cardinality.  Hence there is equality in the inclusion (\ref{relunit532}).  
This shows that $G$ acts transitively on the collection of disjoint subsets (\ref{relunit509}), and also establishes the identity 
(\ref{relunit511}).
\end{proof}

We recall from (\ref{unit21}) that
\begin{align}\label{relunit536}
\begin{split}
E_{l/k} &= \big\{\alpha \in F_l : \norm_{l/k}(\alpha) = 1\big\}\\ 
           &= \bigg\{\alpha \in F_l : \text{$\sum_{w|v} \log |\alpha|_w = 0$ for each place $v$ in $W_{\infty}(k/\Q)$}\bigg\}.
\end{split}         
\end{align}
Because $l/\Q$ is Galois, the subextension $l/k$ is Galois, and the group $H$ acts transitively on each subset 
$W_v(l/k)$.  Then it follows from (\ref{exunit0}) and (\ref{relunit536}) that $H$ acts on $E_{l/k}$.  Here we also assume 
that $k/\Q$ is a Galois extension.  We now show that this additional hypothesis implies that the group $G$ acts on $E_{l/k}$.

\begin{lemma}\label{lemfour2}   Assume that the number fields {\rm (\ref{relunit501})} are such that both $l/\Q$ and 
$k/\Q$ are finite, Galois extensions.  Then the group $G = \Aut(l/\Q)$ acts on the subgroup $E_{l/k}$ of relative units.
\end{lemma}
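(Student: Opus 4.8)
The plan is to combine the description of $E_{l/k}$ by the vanishing of sums of logarithmic absolute values, recorded in (\ref{relunit536}), with the transitivity statement of Lemma \ref{lemfour1}. Since the $G$-action on the free group $F_l$ has already been set up in Section~1, and since the restriction of a group action to an invariant subset is again a group action, it suffices to show that $E_{l/k}$ is invariant under $G$: that is, if $\tau \in G$ and $\alpha \in E_{l/k}$, then $\tau(\alpha) \in E_{l/k}$, or equivalently $\sum_{w|v}\log|\tau(\alpha)|_w = 0$ for every archimedean place $v$ of $k$.

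To verify this, fix such a $\tau$ and $v$, and write $\eta = \vphi(\tau) \in \Aut(k/\Q)$, where $\vphi$ is the canonical map (\ref{relunit504}). Applying the identity (\ref{tate0}) with $\sigma = \tau^{-1}$ gives $|\tau(\alpha)|_w = |\alpha|_{\tau^{-1}w}$ for every place $w$ of $l$, so that
\[
\sum_{w|v}\log|\tau(\alpha)|_w = \sum_{w|v}\log|\alpha|_{\tau^{-1}w} = \sum_{w' \in \tau^{-1}W_v(l/k)}\log|\alpha|_{w'}.
\]
Now Lemma \ref{lemfour1}, in the form of the identity (\ref{relunit511}) applied to the automorphism $\tau^{-1}$ (whose image under $\vphi$ is $\eta^{-1}$, since $\vphi$ is a homomorphism), yields $\tau^{-1}W_v(l/k) = W_{\eta^{-1}v}(l/k)$. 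Since $\eta^{-1}v$ is again an archimedean place of $k$ and $\alpha$ lies in $E_{l/k}$, the last sum equals $\sum_{w'|\eta^{-1}v}\log|\alpha|_{w'} = 0$ by (\ref{relunit536}). As $v$ was arbitrary, this shows $\norm_{l/k}(\tau(\alpha)) = 1$, hence $\tau(\alpha) \in E_{l/k}$, which completes the proof.

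The argument is short, and I do not expect a genuine obstacle: the only points requiring attention are the bookkeeping of inverses — one must feed $\tau^{-1}$, not $\tau$, into both (\ref{tate0}) and (\ref{relunit511}) — together with the trivial observation that an automorphism of $k/\Q$ carries archimedean places of $k$ to archimedean places of $k$. Conceptually, the content is that the hypothesis that $k/\Q$ (and not merely $l/\Q$) is Galois is exactly what makes the full group $G$, rather than only the normal subgroup $H$, permute the fibres $\{W_v(l/k) : v\mid\infty\}$ among themselves; this permutation property is the substance of Lemma \ref{lemfour1}, and the invariance of $E_{l/k}$ is immediate from it.
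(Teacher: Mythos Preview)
Your proof is correct and follows essentially the same route as the paper's: both reduce to checking $G$-invariance of $E_{l/k}$, rewrite $\sum_{w|v}\log|\tau(\alpha)|_w$ as a sum over $\tau^{-1}W_v(l/k)$ via (\ref{tate0}), and then invoke (\ref{relunit511}) to identify this set as $W_{v'}(l/k)$ for another archimedean place $v'$ of $k$. The only cosmetic difference is that the paper sets $\eta=\vphi(\tau^{-1})$ directly, whereas you set $\eta=\vphi(\tau)$ and work with $\eta^{-1}$.
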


\begin{proof}  As $G$ acts on elements of the group $F_l$, it suffices to show that if $\alpha$ belongs to the subgroup
$E_{l/k} \subseteq F_l$ then $\tau(\alpha)$ belongs to $E_{l/k}$ for each automorphism $\tau$ in $G$.  Therefore 
we suppose that $\tau$ belongs to $G$, and we let $\vphi\bigl(\tau^{-1}\bigr) = \eta$, where $\vphi$ is the canonical 
homomorphism (\ref{relunit504}).  If $\alpha$ belongs to $E_{l/k}$, then using the identity (\ref{relunit511}) in the statement of
Lemma \ref{lemfour1} we find that
\begin{align}\label{relunit538}
\begin{split}
\sum_{w|v} \log |\tau(\alpha)|_w &= \sum_{w|v} \log |\alpha|_{\tau^{-1} w}\\
	&= \sum_{w \in \tau^{-1} W_v(l/k)} \log |\alpha|_w\\
	&= \sum_{w | \eta v} \log |\alpha|_w\\
	&= 0.
\end{split}
\end{align}
We conclude that $\tau(\alpha)$ belongs to $E_{l/k}$.  
\end{proof}

It follows from Lemma \ref{lemfour2} that an element $\alpha \not= 1$ in the group $E_{l/k}$ has an orbit
\begin{equation}\label{orb1}
\{\tau(\alpha) : \tau \in G\} \subseteq E_{l/k}.
\end{equation}
If the subset on the left of (\ref{orb1}) contains $r(l/k)$ multiplicatively independent elements, then we say that 
$\alpha \not= 1$ is a {\it relative Minkowski unit} for the subgroup $E_{l/k}$.  If $k$ is $\Q$, or if $k$ is an imaginary, quadratic extension 
of $\Q$, then $r(k) = 0$, and $E_{l/k} = F_l$.  In this case $\alpha \not= 1$ in $E_{l/k}$ is a relative Minkowski unit for $E_{l/k}$ if 
and only if $\alpha \not= 1$ is a Minkowski unit for $F_l$.  On the other hand, if $l$ is a $\CM$-field, and $k$ is the maximal totally real 
subfield of $l$, then $r(l/k) = r(l) - r(k) = 0$, the subgroup $E_{l/k}$ is trivial, and relative Minkowski units do not exist.  

%%%%%%%%%%%%%%%%%%%%%%%%%%%%%%%%%%%%%%%%%%%%%%%%%%%%%%%%%%%%%
\section{Relative Units III: A bi-homomorphism}\label{RUIII}

In this section we continue to assume that $l/\Q$ and $k/\Q$ are both Galois extensions, or equivalently that
$H$ is a normal subgroup of $G$.  We define a bi-homomorphism
\begin{equation}\label{epsilon0}
\Delta : F_l \times \Z^N \rightarrow F_l,
\end{equation}
where $N$ is the cardinality of $W_{\infty}(l/\Q)$.  To define the bi-homomorphism (\ref{epsilon0}) we first select an 
archimedean place $\widehat{w}$ in $W_{\infty}(l/\Q)$, and a transversal
\begin{equation}\label{epsilon1}
\Psi = \{\psi_1, \psi_2, \dots , \psi_N\}
\end{equation}
for the left cosets of the stabilizer $G_{\widehat{w}}$.  Thus we have the disjoint union
\begin{equation*}\label{epsilon2}
G = \bigcup_{n = 1}^N \psi_n G_{\widehat{w}}.
\end{equation*}
The bi-homomorphism $\Delta$ depends on the choice of $\widehat{w}$ and on
the choice of the transversal (\ref{epsilon1}), but to simplify notation we suppress this dependence.  We write $\Z^N$
for the free abelian group of rank $N$, and we identify the elements of this group with functions
\begin{equation}\label{epsilon3}
f : G \rightarrow \Z,
\end{equation}
that are constant on left cosets of $G_{\widehat{w}}$.  We recall that $[G : G_{\widehat{w}}] = N$, so that the group of 
such functions does form a free abelian group of rank $N$.  If $G_{\widehat{w}}$ is not trivial, then it has order $2$, and
\begin{equation*}\label{epsilon4}
G_{\widehat{w}} = \{1, \rho\}, \quad\text{where $\rho^2 = 1$.}
\end{equation*}
In this case each left coset of $G_{\widehat{w}}$ has two representatives.  We find that 
\begin{equation*}\label{epsilon5}
\eta G_{\widehat{w}} = \{\eta, \eta \rho\} = \eta \rho \{1 , \rho\} = \eta \rho G_{\widehat{w}}.  
\end{equation*}
Thus a function $f$ as in (\ref{epsilon3}), belongs to $\Z^N$ if and only if it satisfies the identity
\begin{equation}\label{epsilon6}
f(\eta) = f(\eta \rho)
\end{equation}
for each element $\eta$ in $G$.  If $(\alpha, f)$ is an element of the product $F_l \times \Z^N$, we define
\begin{equation*}\label{epsilon9}
\Delta(\alpha, f) = \prod_{n = 1}^N \psi_n(\alpha)^{f(\psi_n)}.
\end{equation*}
As $G$ acts on the group $F_l$, it is obvious that $\Delta(\alpha, f)$ belongs to $F_l$.
If $(\alpha_1, f)$ and $(\alpha_2, f)$ are both elements of $F_l \times \Z^N$, we find that
\begin{align}\label{epsilon13}
\begin{split}
\Delta(\alpha_1, f) \Delta(\alpha_2, f) 
	&= \biggl(\prod_{m = 1}^N \psi_m(\alpha_1)^{f(\psi_m)}\biggr)
		\biggl(\prod_{n = 1}^N \psi_n(\alpha_2)^{f(\psi_n)}\biggr)\\
	&= \prod_{n = 1}^N \psi_n(\alpha_1 \alpha_2)^{f(\psi_n)} = \Delta(\alpha_1 \alpha_2, f).
\end{split}
\end{align}
This shows that for each fixed element $f$ in $\Z^N$, the map
\begin{equation*}\label{epsilon14}
\alpha \mapsto \Delta(\alpha, f)
\end{equation*}
is a homomorphism from $F_l$ into $F_l$.
In a similar manner, if $(\alpha, f_1)$ and $(\alpha, f_2)$ are both elements of $F_l \times \Z^N$, we get
\begin{equation}\label{epsilon17}
\Delta(\alpha, f_1) \Delta(\alpha, f_2) = \Delta(\alpha, f_1 + f_2).
\end{equation}
The identity (\ref{epsilon17}) shows that for each fixed $\alpha$ in $F_l$, the map
\begin{equation}\label{epsilon18}
f \mapsto \Delta(\alpha, f)
\end{equation}
is a homomorphism from $\Z^N$ into $F_l$.  Hence the map $\Delta$ is a bi-homomorphism.  

It will be convenient to use the $\|\ \|_1$-norm on functions $f$ in $\Z^N$.  Therefore we set
\begin{equation*}\label{epsilon19}
\|f\|_1 = \sum_{n = 1}^N \bigl|f(\psi_n)\bigr|
\end{equation*}
for each $f$ in $\Z^N$.  

\begin{lemma}\label{lemdelta1}  If $(\alpha, f)$ is an element of $F_l \times \Z^N$, then
\begin{equation}\label{epsilon20}
h\bigl(\Delta(\alpha, f)\bigr) \le \|f\|_1 h(\alpha).
\end{equation}
\end{lemma}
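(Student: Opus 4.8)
The plan is to reduce the statement to three standard properties of the absolute, logarithmic Weil height, each immediate from the defining formula (\ref{int3}): subadditivity under multiplication, $h(\gamma_1 \gamma_2) \le h(\gamma_1) + h(\gamma_2)$; homogeneity under integer powers, $h(\gamma^m) = |m|\,h(\gamma)$; and invariance under the Galois action, $h(\sigma(\gamma)) = h(\gamma)$ for every $\sigma \in \Aut(l/\Q)$. Subadditivity comes from $\log^+|xy|_w \le \log^+|x|_w + \log^+|y|_w$ summed over the places $w$ of $l$; homogeneity from $|\gamma^m|_w = |\gamma|_w^m$; and Galois invariance from the identity (\ref{tate0}), which says that each $\sigma \in G$ merely permutes the places of $l$ while preserving the normalized absolute values. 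Since the excerpt has already noted (around (\ref{sum5})) that $h$ is constant on cosets modulo $\Tor\bigl(O_l^{\times}\bigr)$, the height is well defined on $F_l$ and these properties are inherited there, so one may freely compute with coset representatives.

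First I would recall that, by definition, $\Delta(\alpha, f) = \prod_{n=1}^N \psi_n(\alpha)^{f(\psi_n)}$, where $\Psi = \{\psi_1, \psi_2, \dots, \psi_N\}$ is the fixed transversal. Applying subadditivity across the $N$ factors of this product gives
\[
h\bigl(\Delta(\alpha, f)\bigr) \le \sum_{n=1}^N h\bigl(\psi_n(\alpha)^{f(\psi_n)}\bigr).
\]
Next I would apply homogeneity to each summand, obtaining $h\bigl(\psi_n(\alpha)^{f(\psi_n)}\bigr) = |f(\psi_n)|\,h\bigl(\psi_n(\alpha)\bigr)$, and then Galois invariance to replace $h\bigl(\psi_n(\alpha)\bigr)$ by $h(\alpha)$ for each $n$. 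Summing, the right-hand side becomes $\Bigl(\sum_{n=1}^N |f(\psi_n)|\Bigr) h(\alpha) = \|f\|_1\,h(\alpha)$, which is exactly the claimed inequality (\ref{epsilon20}).

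There is no serious obstacle here; the argument is a three-line chain of inequalities. The only points requiring a word of care are bookkeeping ones: citing the Galois invariance of $h$ correctly (via (\ref{tate0}), i.e. the fact that automorphisms in $G$ permute the archimedean and non-archimedean places of $l$ while fixing the normalized absolute values), and observing that working with coset representatives in $F_l$ rather than with cosets is harmless because $h$ is constant on cosets modulo the roots of unity. With these remarks in place the lemma follows at once.
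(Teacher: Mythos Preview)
Your proof is correct and follows essentially the same approach as the paper's: the paper works directly with the identity $2h(\cdot) = \sum_{w\mid\infty} \bigl|\log|\cdot|_w\bigr|$ and applies the triangle inequality at the level of archimedean absolute values, while you package the same steps as subadditivity, homogeneity, and Galois invariance of $h$. The underlying inequalities are identical, and both arguments conclude via the constancy of $n \mapsto h(\psi_n(\alpha))$.
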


\begin{proof}  The map 
\begin{equation*}\label{epsilon21}
n \mapsto h\bigl(\psi_n(\alpha)\bigr)
\end{equation*}
is constant.  Thus we have
\begin{align*}\label{epsilon22}
\begin{split}
2 h\bigl(\Delta(\alpha, f)\bigr) &= \sum_{w | \infty} \bigl|\log |\Delta(\alpha, f)|_w\bigr|\\
	&= \sum_{w | \infty} \biggl|\sum_{n = 1}^N f(\psi_n) \log |\psi_n(\alpha)|_w\biggr|\\
	&\le \sum_{n = 1}^N \bigl|f(\psi_n)\bigr| \sum_{w | \infty} \bigl|\log |\psi_n(\alpha)|_w\bigr|\\
	&= 2 \sum_{n = 1}^N \bigl|f(\psi_n)\bigr| h\bigl(\psi_n(\alpha)\bigr)\\
	&= 2 \|f\|_1 h(\alpha).
\end{split}
\end{align*}
This proves the lemma.
\end{proof}

We have noted that the group $G$ acts on $F_l$.  The group $G$ also acts on $\Z^N$.  More precisely, if $\eta$ 
belongs to $G$ and $x \mapsto f(x)$ belongs to $\Z^N$, we denote the action of $\eta$ on $f$ by $[\eta, f]$, where
\begin{equation}\label{eta1}
[\eta, f](x) = f\bigl(\eta^{-1} x\bigr).
\end{equation}
If $f$ satisfies
\begin{equation*}\label{eta2}
f(\tau) = f(\tau \rho)
\end{equation*}
for each $\tau$ in $G$, then it is obvious that
\begin{equation}\label{eta3}
f\bigl(\eta^{-1} \tau\bigr) = f\bigl(\eta^{-1} \tau \rho\bigr)
\end{equation}
for each $\tau$ in $G$.  That is, if $\eta$ belongs to $G$ and $f$ belongs to $\Z^N$, then $[\eta, f]$ belongs to $\Z^N$. 
It is clear that
\begin{equation}\label{eta5}
[1, f](x) = f(x).
\end{equation}
If both $\eta_1$ and $\eta_2$ belong to $G$, then
\begin{align}\label{eta6}
\begin{split}
\bigl[\eta_1, [\eta_2, f]\bigr] (x) & = [ \eta_{2} , f ] (\eta_{1}^{-1} x)\\
                                             &= f\bigl(\eta_2^{-1} \eta_1^{-1} x\bigr)\\
                                             &= f\bigl((\eta_1 \eta_2)^{-1} x\bigr)\\
                                             &= \bigl[\eta_1 \eta_2, f\bigr] (x).
\end{split}
\end{align}
The identities (\ref{eta3}), (\ref{eta5}), and (\ref{eta6}), verify that (\ref{eta1}) defines an action of the group $G$ on
the collection of functions in $\Z^N$.

The action of $G$ on the $\Z^N$ occurs in the following identities.

\begin{lemma}\label{lemdelta2}  Let $\eta$ belong to $G$, and let $(\alpha, f)$ be a point in $F_l \times \Z^N$.
If $l/\Q$ is a totally real Galois extension, then $G_{\widehat{w}}$ is trivial and
\begin{equation}\label{eta44}
\eta\bigl(\Delta(\alpha, f)\bigr) = \Delta\bigl(\alpha, [\eta, f]\bigr).
\end{equation}
If $l/\Q$ is a totally complex Galois extension, then $G_{\widehat{w}}$ is cyclic of order $2$,
\begin{equation}\label{eta49}
G_{\widehat{w}} = \{1, \rho\},\quad\text{where $\rho^2 = 1$},
\end{equation}
and
\begin{equation}\label{eta53}
\eta\bigl(\Delta(\alpha \rho(\alpha), f)\bigr) = \Delta\bigl(\alpha \rho(\alpha), [\eta, f]\bigr).
\end{equation}
\end{lemma}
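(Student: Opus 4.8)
The key observation is that $\Delta(\alpha, f) = \prod_{n=1}^N \psi_n(\alpha)^{f(\psi_n)}$ is defined by summing $\psi_n(\alpha)$ over a transversal of $G_{\widehat{w}}$, so applying $\eta \in G$ should amount to reindexing the transversal, and reindexing the transversal is exactly what the action $[\eta, f](x) = f(\eta^{-1}x)$ records. First I would treat the totally real case, where $G_{\widehat{w}}$ is trivial and the transversal $\Psi = \{\psi_1, \dots, \psi_N\}$ is all of $G$. Applying $\eta$ to $\Delta(\alpha, f)$ gives $\prod_{n=1}^N (\eta\psi_n)(\alpha)^{f(\psi_n)}$ since $\eta$ is a group homomorphism on $F_l$. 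Now as $n$ ranges over $\{1, \dots, N\}$, the elements $\eta\psi_n$ run over all of $G$ again (because $\Psi = G$), so I would reindex by setting $\psi_m = \eta\psi_n$, i.e. $\psi_n = \eta^{-1}\psi_m$, obtaining $\prod_{m=1}^N \psi_m(\alpha)^{f(\eta^{-1}\psi_m)} = \prod_{m=1}^N \psi_m(\alpha)^{[\eta,f](\psi_m)} = \Delta(\alpha, [\eta, f])$. That proves (\ref{eta44}).

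For the totally complex case $G_{\widehat{w}} = \{1, \rho\}$, the same reindexing idea works but one must check that the relevant quantities are well defined on cosets. The point is that $f \in \Z^N$ satisfies $f(\tau) = f(\tau\rho)$ for all $\tau$, and I claim the function $\tau \mapsto \tau(\alpha\rho(\alpha))$ is also constant on left cosets of $G_{\widehat{w}}$: indeed $(\tau\rho)(\alpha\rho(\alpha)) = \tau\rho(\alpha) \cdot \tau\rho^2(\alpha) = \tau(\rho(\alpha)\alpha) = \tau(\alpha\rho(\alpha))$ since $\rho^2 = 1$ and $F_l$ is abelian. Thus both $\psi_n \mapsto \psi_n(\alpha\rho(\alpha))^{f(\psi_n)}$ and its behavior under $\eta$ depend only on the coset $\psi_n G_{\widehat{w}}$. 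Applying $\eta$ to $\Delta(\alpha\rho(\alpha), f) = \prod_n \psi_n(\alpha\rho(\alpha))^{f(\psi_n)}$ gives $\prod_n (\eta\psi_n)(\alpha\rho(\alpha))^{f(\psi_n)}$; now $\{\eta\psi_1, \dots, \eta\psi_N\}$ is again a transversal of $G_{\widehat{w}}$, so reindexing $\psi_m$ as the transversal representative of the coset $\eta\psi_n G_{\widehat{w}}$, and using the coset-invariance of both the base and the exponent, yields $\prod_m \psi_m(\alpha\rho(\alpha))^{f(\eta^{-1}\psi_m)} = \Delta(\alpha\rho(\alpha), [\eta, f])$, which is (\ref{eta53}). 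One should also note that $[\eta, f]$ does lie in $\Z^N$, which is already recorded in the text via (\ref{eta3}).

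The only subtlety — and the one step I would be most careful about — is the reindexing in the complex case: after applying $\eta$, the element $\eta\psi_n$ need not equal the chosen transversal representative $\psi_m$ of its coset; it could be $\psi_m\rho$ instead. This is precisely why the coset-invariance of $\tau \mapsto \tau(\alpha\rho(\alpha))$ and of $f$ is essential, and it is also why the statement is phrased for $\alpha\rho(\alpha)$ rather than for $\alpha$ alone. I would make this explicit: write $\eta\psi_n = \psi_{\sigma(n)} \epsilon_n$ with $\epsilon_n \in G_{\widehat{w}}$ and $\sigma$ a permutation of $\{1, \dots, N\}$, then observe $(\eta\psi_n)(\alpha\rho(\alpha)) = \psi_{\sigma(n)}(\alpha\rho(\alpha))$ and $f(\psi_n) = f(\eta^{-1}\eta\psi_n) = f(\eta^{-1}\psi_{\sigma(n)}\epsilon_n) = f(\eta^{-1}\psi_{\sigma(n)}) = [\eta,f](\psi_{\sigma(n)})$, so the product becomes $\prod_n \psi_{\sigma(n)}(\alpha\rho(\alpha))^{[\eta,f](\psi_{\sigma(n)})} = \Delta(\alpha\rho(\alpha), [\eta,f])$ after relabeling by $\sigma$. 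The totally real case is then the special case where $G_{\widehat{w}}$ is trivial, so each $\epsilon_n = 1$ and no coset-invariance is needed beyond $\eta$ being a homomorphism.
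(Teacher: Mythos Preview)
Your proof is correct and follows essentially the same approach as the paper. In the totally real case your argument is identical to the paper's; in the totally complex case the paper organizes the computation slightly differently---it uses the bi-homomorphism identity $\Delta(\alpha\rho(\alpha),f)=\Delta(\alpha,f)\Delta(\rho(\alpha),f)$ together with $f(\psi_n)=f(\psi_n\rho)$ to extend the product over the full group $G=\Psi\cup\Psi\rho$, reindexes there, and collapses back---whereas you keep the product over the transversal and invoke directly the coset-invariance of $\tau\mapsto\tau(\alpha\rho(\alpha))$ and of $f$; these are two equivalent bookkeepings of the same reindexing argument.
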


\begin{proof}  We assume that $l/\Q$ totally real.  Then we have
\begin{equation*}\label{eta57}
G = \{\psi_1, \psi_2, \dots , \psi_N\},
\end{equation*}
and
\begin{align}\label{eta61}
\begin{split}
\eta\bigl(\Delta(\alpha, f)\bigr) &= \prod_{n = 1}^N \bigl(\eta \psi_n (\alpha)\bigr)^{f(\psi_n)}\\
		&= \prod_{n = 1}^N \psi_n(\alpha)^{f(\eta^{-1} \psi_n)}\\
		&= \Delta(\alpha, [\eta, f]).
\end{split}
\end{align}

Now assume that $l/\Q$ is totally complex.  Then
\begin{equation*}\label{eta65}
G = \{\psi_1, \psi_2, \dots , \psi_N\} \cup \{\psi_1\rho, \psi_2 \rho, \dots , \psi_N \rho\} 
		= \Psi \cup \Psi \rho,
\end{equation*}
and
\begin{align}\label{eta68}
\begin{split}
\eta\bigl(\Delta(\alpha, f) \Delta(\rho(\alpha), f)\bigr) 
	&= \prod_{m = 1}^N\bigl(\eta \psi_m(\alpha)\bigr)^{f(\psi_m)} 
			\prod_{n = 1}^N \bigl(\eta \psi_n \rho (\alpha)\bigr)^{f(\psi_n \rho)}\\
	&= \prod_{m = 1}^N\bigl(\psi_m(\alpha)\bigr)^{f(\eta^{-1}\psi_m)} 
			\prod_{n = 1}^N \bigl(\psi_n \rho (\alpha)\bigr)^{f(\eta^{-1}\psi_n \rho)}\\
	&= \Delta(\alpha, [\eta, f]) \Delta(\rho(\alpha),[\eta, f]).
\end{split}
\end{align}
When (\ref{epsilon13}) and (\ref{eta68}) are combined, we obtain the identity (\ref{eta53}) for each automorphism $\eta$ in $G$.
\end{proof}

As $F_l$ is a free abelian group of rank $N - 1$, it follows that for each
$\alpha$ in $F_l$ the kernel of the homomorphism (\ref{epsilon18}) has rank greater than or equal to $1$.  The following result
is an immediate consequence of Theorem \ref{thmspecial1}

\begin{theorem}\label{thmdelta1}  Let $\beta$ in $F_l$ be a special Minkowski unit with respect to $\widehat{w}$.  Then the image
\begin{equation*}\label{epsilon23}
\big\{\Delta(\beta, f) : f \in \Z^N\big\} 
\end{equation*}
of the homomorphism {\rm (\ref{epsilon18})} is a subgroup of $F_l$ with rank $N - 1$.
\end{theorem}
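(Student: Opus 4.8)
The plan is to recognize that the image of the homomorphism $f \mapsto \Delta(\beta, f)$ is nothing but the subgroup of $F_l$ generated by the Galois conjugates $\psi_1(\beta), \psi_2(\beta), \dots, \psi_N(\beta)$, and then to quote the rank statement already obtained for special Minkowski units.

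First I would unwind the definition. Since $\Delta(\beta, f) = \prod_{n=1}^{N} \psi_n(\beta)^{f(\psi_n)}$, the element $\Delta(\beta, f)$ depends on $f$ only through the integer tuple $\bigl(f(\psi_1), \dots, f(\psi_N)\bigr)$. Conversely, because $\Psi = \{\psi_1, \dots, \psi_N\}$ is a transversal for the left cosets of $G_{\widehat{w}}$, every such tuple occurs: one assigns arbitrary integer values at $\psi_1, \dots, \psi_N$ and extends to a function on $G$ that is constant on each coset of $G_{\widehat{w}}$ (when $G_{\widehat{w}} = \{1, \rho\}$ this extension is forced by the constraint (\ref{epsilon6}), namely $f(\psi_n \rho) = f(\psi_n)$). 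Hence
\begin{equation*}
\bigl\{\Delta(\beta, f) : f \in \Z^N\bigr\} = \langle \psi_1(\beta), \psi_2(\beta), \dots, \psi_N(\beta)\rangle,
\end{equation*}
and the right-hand side is visibly a subgroup of $F_l$, being the image of a group homomorphism.

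Then I would apply Lemma \ref{lemspecial2}. As $\beta$ is a special Minkowski unit with respect to $\widehat{w}$ and $\Psi$ is a transversal for the left cosets of $G_{\widehat{w}}$, part (i) of that lemma (taken with $T = \Psi$) shows that every subset of $\{\psi_1(\beta), \dots, \psi_N(\beta)\}$ of cardinality $N-1$ is multiplicatively independent in $F_l$; in particular the subgroup they generate has rank at least $N-1$. Since $F_l$ is free abelian of rank $N-1$, every subgroup of $F_l$ has rank at most $N-1$, so the displayed subgroup has rank exactly $N-1$, which is the assertion. Alternatively one argues through the matrix $M(\beta, \Psi, \widehat{w}) = \bigl(\log |\psi_m^{-1}\psi_n(\beta)|_{\widehat{w}}\bigr)$: Lemma \ref{lemspecial2} identifies its $\Q$-rank and $\R$-rank as $N-1$, its columns are the logarithmic embedding vectors of $\psi_1(\beta), \dots, \psi_N(\beta)$, and the logarithmic embedding is injective on $F_l$, so the $\Z$-span of these conjugates has rank equal to that $\Q$-rank.

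I do not expect any genuine obstacle here; the only point that requires a moment's care is the identification in the first step, i.e.\ that the group of $\Z$-valued functions on $G$ that are constant on cosets of $G_{\widehat{w}}$ surjects onto $\Z^N$ under $f \mapsto \bigl(f(\psi_1), \dots, f(\psi_N)\bigr)$, so that the image of $\Delta(\beta, \cdot)$ is the full conjugate subgroup and not a proper subgroup of it.
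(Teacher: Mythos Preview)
Your argument is correct. The paper's proof reaches the same conclusion by the dual route: rather than identifying the image with $\langle \psi_1(\beta),\dots,\psi_N(\beta)\rangle$ and invoking Lemma~\ref{lemspecial2}(i), it shows that the kernel $\K(\beta)=\{f\in\Z^N:\Delta(\beta,f)=1\}$ is the integer null space of $M(\beta,\Psi,\widehat{w})$ and hence has rank $1$, since that matrix has rank $N-1$. Both arguments rest on the same fact established in Lemma~\ref{lemspecial2} (and Theorem~\ref{thmspecial1}(iii)), so the difference is only one of perspective---you bound the image rank from below using multiplicative independence of the conjugates, while the paper bounds the kernel rank from above using the matrix rank. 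Your explicit identification of the image with the conjugate subgroup is a clean observation that the paper leaves implicit.
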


\begin{proof}  It suffices to show that the kernel
\begin{align}\label{epsilon25}
\begin{split}
\K(\beta) &= \big\{f \in \Z^N : \Delta(\beta, f) = 1\big\}\\
	       &= \big\{f \in \Z^N : \text{$\sum_{n = 1}^N f(\psi_n) \log |\psi_n(\beta)|_w = 0$
	       				for each $w$ in $W_{\infty}(l/\Q)$}\big\}\\
	       &= \big\{f \in \Z^N : \text{$\sum_{n = 1}^N f(\psi_n) \log |\psi_m^{-1}\psi_n(\beta)|_{\widehat{w}} = 0$
	       				for $m = 1, 2, \dots , N$}\big\}\\
\end{split}
\end{align}
has rank $1$.  If we write $f$ in $\Z^N$ as a (column) vector $\bff$, then $\K(\beta)$ is the null space of the linear transformation  
\begin{equation*}\label{epsilon27}
\bff \mapsto M(\beta, \Psi, \widehat{w}) \bff,
\end{equation*}
where $M(\beta, \Psi, \widehat{w})$ is the $N \times N$ matrix defined in (\ref{new112}).  As the matrix 
$M(\beta, \Psi, \widehat{w})$ has rank $N-1$, it follows that the kernel $\K(\beta)$ has rank $1$.
\end{proof}

If $f$ belongs to the kernel $\K(\beta)$ defined in (\ref{epsilon25}) and $f$ is not identically zero, then it follows from (v)
in the statement of Theorem \ref{thmspecial1}, that the co-ordinate function 
\begin{equation*}\label{epsilon31}
n \mapsto f(\psi_n)
\end{equation*}
takes only positive values, or it takes only negative values.  This will play a crucial role in our construction of relative Minkowski units.

We recall that $H$ acts transitively on each collection of places $W_v(l/k)$, where $v$ is a place in $W_{\infty}(k/\Q)$, and 
$G$ acts transitively on the collection of places $W_{\infty}(l/\Q)$.  Also by Lemma \ref{lemfour1}, the group $G$ acts 
transitively on the collection of subsets
\begin{equation}\label{delta2}
\big\{W_v(l/k) : v \in W_u(k/\Q)\big\}.
\end{equation}
If $\tau$ is an automorphism in $G$, and $\vphi : G \rightarrow G/H$ is the canonical homomorphism, then at each place $v$ of $k$, 
the action of $\tau$ on the subsets in the collection (\ref{delta2}) is given by
\begin{equation}\label{delta4}
\tau W_v(l/k) = W_{\eta v}(l/k),
\end{equation}
where $\eta = \vphi(\tau)$.  If $\tau_1$ and $\tau_2$ belong to $G$, then it follows from (\ref{delta4}) that 
\begin{equation}\label{delta8}
\tau_1 W_v(l/k) = \tau_2 W_v(l/k)
\end{equation}
if and only if $\vphi(\tau_1) = \vphi(\tau_2)$.  That is, (\ref{delta8}) holds if and only if $\tau_1 H = \tau_2 H$. 

There is a further implication of (\ref{delta4}) that will be useful.  Let $\alpha$ belong to $F_l$, and let $v$ be a place in 
$W_{\infty}(k/\Q)$.  Then for each $\tau$ in $G$ we have
\begin{align*}\label{delta13}
\begin{split}
\sum_{w | v} \log \bigl|\tau^{-1}(\alpha)\bigr|_w &= \sum_{w \in W_v(l/k)} \log |\alpha|_{\tau w} \\
	&= \sum_{w \in \tau W_v(l/k)} \log |\alpha|_w \\
	&= \sum_{w \in W_{\eta v}(l/k)} \log |\alpha|_w,
\end{split}
\end{align*}
where $\eta = \vphi(\tau)$.  It follows that the map
\begin{equation}\label{delta15}
\tau \mapsto \sum_{w | v} \log \bigl|\tau^{-1}(\alpha)\bigr|_w
\end{equation}
from $G$ into $\R$, depends only on $\eta = \vphi(\tau)$, and therefore (\ref{delta15}) is constant for $\tau$ 
restricted to a coset of $H$.  This observation was already used in the identity (\ref{relunit538}). 

We select a place $\widehat{w}$ in $W_{\infty}(l/\Q)$.  As before we write
\begin{equation*}\label{delta19}
G_{\widehat{w}} = \{\tau \in G : \tau \widehat{w} = \widehat{w}\},
\end{equation*}
for the stabilizer of $\widehat{w}$ in $G$.  We will continue to write
\begin{equation*}\label{delta29}
|G| = [l : \Q] = \begin{cases}  N&  \text{if $l/\Q$ is a totally real Galois extension,}\\
 					    2N&  \text{if $l/\Q$ is a totally complex Galois extension,}\end{cases}
\end{equation*}
so that
\begin{equation*}\label{delta33}
[G : G_{\widehat{w}}] = \bigl|W_{\infty}(l/\Q)\bigr| = N.
\end{equation*}
Let $I = [G : G_{\widehat{w}} H]$, and let 
\begin{equation}\label{delta37}
\{\tau_1, \tau_2, \dots , \tau_I\} \subseteq G
\end{equation}
be a left transversal for the subgroup $G_{\widehat{w}} H$ in $G$.  Then we have the disjoint union
\begin{equation}\label{delta41}
G = \bigcup_{i = 1}^I \tau_i G_{\widehat{w}} H.
\end{equation}
Using Lemma \ref{lemfour1}, the rank of the group $E_{l/k}$ of relative units is given by
\begin{align*}\label{delta45}
\begin{split}
r(l/k) &= r(l)  - r(k)\\
        &= \bigl(\bigl|W_{\infty}(l/\Q)\bigr| - 1\bigr) - \bigl(\bigl|W_{\infty}(k/\Q)\bigr| - 1\bigr)\\
        &= [G : G_{\widehat{w}}] - [G : G_{\widehat{w}} H]\\
          &= N - I.
\end{split}        
\end{align*}
Similarly, let $J = [G_{\widehat{w}} H : G_{\widehat{w}}]$, and let
\begin{equation}\label{delta53}
\{\sigma_1, \sigma_2, \dots , \sigma_J\} \subseteq G_{\widehat{w}} H
\end{equation}  
be a left transversal for the subgroup $G_{\widehat{w}}$ in $G_{\widehat{w}} H$.  Then we have the disjoint union
\begin{equation}\label{delta55}
G_{\widehat{w}} H = \bigcup_{j = 1}^J \sigma_j G_{\widehat{w}}.
\end{equation}
If $k/\Q$ is totally real then $G_{\widehat{w}} H = H$, and it is obvious that
\begin{equation}\label{delta56}
\{\sigma_1, \sigma_2, \dots , \sigma_J\} = H.
\end{equation}
If $k/\Q$ is totally complex then $G_{\widehat{w}} = \{1, \rho\}$ has order $2$, and $G_{\widehat{w}} \cap H$ is trivial.  It follows that
\begin{equation*}\label{delta57}
G_{\widehat{w}} H = H \cup \rho H = H \cup H \rho,\quad\text{and}\quad [G_{\widehat{w}} H : G_{\widehat{w}}] = |H|.
\end{equation*}
In this case we select the transversal (\ref{delta53}) so that
\begin{equation}\label{delta58}
\{\sigma_1, \sigma_2, \dots , \sigma_J\} = H.
\end{equation}

Combining (\ref{delta41}) and (\ref{delta55}), we find that
\begin{equation}\label{delta63}
G = \bigcup_{i = 1}^I \tau_i G_{\widehat{w}} H
    = \bigcup_{i = 1}^I \tau_i \biggl(\bigcup_{j = 1}^J \sigma_j G_{\widehat{w}}\biggr)
    = \bigcup_{i = 1}^I \bigcup_{j = 1}^J \Bigl(\tau_i \sigma_j G_{\widehat{w}}\Bigr).
\end{equation}
It follows that
\begin{equation*}\label{delta65}
\big\{\tau_i \sigma_j : \text{$i = 1, 2, \dots , I$, and $j = 1, 2, \dots , J$}\big\}
\end{equation*}
is a transversal for the subgroup $G_{\widehat{w}}$ in $G$.  We also have
\begin{equation*}\label{delta67}
N = [G : G_{\widehat{w}}] = [G : G_{\widehat{w}} H] [G_{\widehat{w}} H : G_{\widehat{w}}] = I J.
\end{equation*}

Next we require a variant of the fact that (\ref{delta15}) depends only on $\eta = \vphi(\tau)$.

\begin{lemma}\label{lemdelta3}  Let $\tau_i$ be a coset representative in {\rm (\ref{delta37})}, and let
$\sigma_j$ be a coset representative in {\rm (\ref{delta53})}.  Let $\alpha$ be an element of the group $F_l$, and
let $v$ be a place in $W_{\infty}(k/\Q)$.  Then the map
\begin{equation}\label{delta69}
(\tau_i, \sigma_j) \mapsto \sum_{w | v} \log\bigl|\tau_i \sigma_j \alpha\bigr|_w
\end{equation}
depends only on $\tau_i$, and not on $\sigma_j$. 
\begin{equation*}\label{delta71}
\end{equation*}
\end{lemma}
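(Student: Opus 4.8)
The plan is to deduce the statement from Lemma \ref{lemfour1} — specifically the identity \eqref{delta4} describing the Galois action on the sets $W_v(l/k)$ — together with \eqref{tate0}, and one structural observation about the chosen transversal: by \eqref{delta56} in the totally real case and by \eqref{delta58} in the totally complex case, the set $\{\sigma_1,\sigma_2,\dots,\sigma_J\}$ selected in \eqref{delta53} is exactly the subgroup $H$, so every $\sigma_j$ lies in $H=\ker\vphi$. This is the point that makes the sum insensitive to $j$.

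First I would rewrite the sum in \eqref{delta69} using \eqref{tate0}, which gives $|\tau(\alpha)|_w=|\alpha|_{\tau^{-1}w}$ for all $\tau\in G$ and all places $w$ of $l$; hence
\[ \sum_{w|v}\log\bigl|\tau_i\sigma_j(\alpha)\bigr|_w=\sum_{w\in W_v(l/k)}\log|\alpha|_{(\tau_i\sigma_j)^{-1}w}. \]
As $w$ ranges over $W_v(l/k)$, the place $(\tau_i\sigma_j)^{-1}w$ ranges over the set $(\tau_i\sigma_j)^{-1}W_v(l/k)$, and by \eqref{delta4} this set equals $W_{\eta v}(l/k)$ with $\eta=\vphi\bigl((\tau_i\sigma_j)^{-1}\bigr)$. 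Since $\vphi$ is a homomorphism and $\sigma_j\in H=\ker\vphi$, we get
\[ \eta=\vphi(\tau_i\sigma_j)^{-1}=\bigl(\vphi(\tau_i)\vphi(\sigma_j)\bigr)^{-1}=\vphi(\tau_i)^{-1}, \]
which makes no reference to $\sigma_j$. Therefore
\[ \sum_{w|v}\log\bigl|\tau_i\sigma_j(\alpha)\bigr|_w=\sum_{w\in W_{\vphi(\tau_i)^{-1}v}(l/k)}\log|\alpha|_w, \]
and the right-hand side depends only on $\tau_i$ (together with the fixed data $\alpha$ and $v$). This is exactly the assertion of the lemma.

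The argument is essentially immediate, so I do not expect a serious obstacle; the only step that requires attention is the appeal to the specific normalization \eqref{delta56}--\eqref{delta58} of the transversal $\{\sigma_1,\dots,\sigma_J\}$ as the subgroup $H$. With an arbitrary transversal for $G_{\widehat{w}}$ in $G_{\widehat{w}}H$, in the totally complex case a representative $\sigma_j$ could have the form $\rho h$ with $h\in H$ and $\rho\in G_{\widehat{w}}$; since $\rho\notin H$, the image $\vphi(\sigma_j)$ would then be nontrivial and the sum would genuinely vary with $j$. Thus the choice recorded in \eqref{delta58} is precisely what is needed for the lemma to hold as stated.
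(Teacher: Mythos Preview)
Your proof is correct and follows essentially the same approach as the paper: both arguments use \eqref{tate0} to pass to a sum over a translated set of places, invoke Lemma~\ref{lemfour1} (that is, \eqref{delta4}/\eqref{relunit511}) to identify that set as $W_{\eta v}(l/k)$ for $\eta$ in $G/H$, and then use $\sigma_j\in H$ to see that $\eta$ depends only on $\tau_i$. The paper splits this into two steps (first peeling off $\tau_i$, then $\sigma_j$) whereas you handle $\tau_i\sigma_j$ at once, but the substance is the same.
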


\begin{proof}   Let $\vphi : G \rightarrow G/H$ be the canonical homomorphism.  Because $\Aut(k/\Q)$ acts transitively on 
$W_{\infty}(k/\Q)$, and $\Aut(k/\Q)$ is isomorphic to $G/H$, there exists $\widehat{v}$ in $W_{\infty}(k/\Q)$ so that
\begin{equation*}\label{delta73}
\vphi(\tau_i) \widehat{v} = v,\quad\text{and}\quad \tau_i W_{\widehat{v}}(l/k) = W_v(l/k).
\end{equation*}
Using the identity (\ref{relunit511}) in the statement of Lemma \ref{lemfour1}, we find that
\begin{align}\label{delta77}
\begin{split}
\sum_{w | v} \log\bigl|\tau_i \sigma_j \alpha\bigr|_w &= \sum_{w \in W_v(l/k)} \log\bigl|\sigma_j \alpha \bigr|_{\tau_i^{-1} w}\\
	&= \sum_{w \in \tau_i W_{\widehat{v}}(l/k)} \log\bigl|\sigma_j \alpha \bigr|_{\tau_i^{-1} w}\\
	&= \sum_{w | \widehat{v}} \log\bigl|\sigma_j \alpha \bigr|_w.
\end{split}
\end{align}
Because $\sigma_j$ belongs to $H$, its image $\vphi(\sigma_j)$ is trivial.  In this case (\ref{relunit511}) implies that
\begin{equation}\label{delta83}
\sum_{w | \widehat{v}} \log\bigl|\sigma_j \alpha \bigr|_w = \sum_{w | \widehat{v}} \log\bigl|\alpha \bigr|_w.
\end{equation}
Now (\ref{delta77}) and (\ref{delta83}) show that the map (\ref{delta69}) depends on $\tau_i$, but not on $\sigma_j$.
\end{proof}

Let $\eL_{l/k} \subseteq \Z^N$ be the subgroup
\begin{equation*}\label{epsilon40}
\eL_{l/k} = \{f \in \Z^N : \text{$\sum_{j = 1}^J f(\tau_i \sigma_j) = 0$ for each $i = 1, 2, \dots , I$}\}. 
\end{equation*}
The subsets
\begin{equation*}\label{epsilon43}
\tau_i G_{\widehat{w}} H = \bigcup_{j = 1}^J \tau_i \sigma_j G_{\widehat{w}},
\end{equation*}
where $i = 1, 2, \dots , I$, are the distinct cosets of $G_{\widehat{w}} H$ in $G$ and are therefore disjoint.  Hence
the linear conditions defining the subgroup $\eL_{l/k}$ are independent.  It follows that 
\begin{equation*}\label{epsilon45}
\rank \eL_{l/k} = N - I = r(l/k).
\end{equation*}

\begin{theorem}\label{thmdelta2}  Let $\beta$ in $F_l$ be a special Minkowski unit.  Then the image 
\begin{equation}\label{epsilon49}
\big\{\Delta(\beta, f) : f \in \eL_{l/k}\big\}
\end{equation}
of the homomorphism {\rm (\ref{epsilon18})} restricted to $\eL_{l/k}$, is a subgroup of $E_{l/k}$ with rank $r(l/k)$.
\end{theorem}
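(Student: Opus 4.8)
The plan is to establish three claims in turn: that the set (\ref{epsilon49}) is a subgroup of $F_l$, that it is contained in $E_{l/k}$, and that its rank is exactly $N-I = r(l/k)$. The first is immediate from (\ref{epsilon17}): the map (\ref{epsilon18}), $f \mapsto \Delta(\beta, f)$, is a group homomorphism from $\Z^N$ into $F_l$, so its restriction to the subgroup $\eL_{l/k}$ has image a subgroup of $F_l$.

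For membership in $E_{l/k}$, I would take the transversal $\Psi$ appearing in the definition of $\Delta$ to be the transversal $\{\tau_i \sigma_j : 1 \le i \le I,\ 1 \le j \le J\}$ for the left cosets of $G_{\widehat w}$ furnished by (\ref{delta63}), so that $\Delta(\beta, f) = \prod_{i=1}^I \prod_{j=1}^J (\tau_i\sigma_j)(\beta)^{f(\tau_i\sigma_j)}$. Fixing a place $v$ in $W_{\infty}(k/\Q)$, I would then write
\[
\sum_{w | v} \log \bigl|\Delta(\beta, f)\bigr|_w = \sum_{i=1}^I \sum_{j=1}^J f(\tau_i\sigma_j) \sum_{w | v} \log \bigl|(\tau_i\sigma_j)(\beta)\bigr|_w ,
\]
and invoke Lemma \ref{lemdelta3}: the inner sum $\sum_{w | v} \log|(\tau_i\sigma_j)(\beta)|_w$ depends only on $i$, say it equals $b_i(v)$. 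Hence the left side equals $\sum_{i=1}^I b_i(v)\bigl(\sum_{j=1}^J f(\tau_i\sigma_j)\bigr)$, which is $0$ because $f \in \eL_{l/k}$. By the description (\ref{relunit536}) of $E_{l/k}$ this shows $\Delta(\beta, f) \in E_{l/k}$ for every $f \in \eL_{l/k}$.

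For the rank, I would show that the homomorphism (\ref{epsilon18}) is injective on $\eL_{l/k}$; then its image is isomorphic to $\eL_{l/k}$, which has rank $N-I = r(l/k)$. Suppose $f \in \eL_{l/k}$ lies in the kernel $\K(\beta)$ of (\ref{epsilon18}) described in (\ref{epsilon25}) and is not identically zero. By part (v) of Theorem \ref{thmspecial1} (as recorded in the remark following Theorem \ref{thmdelta1}), the numbers $f(\psi_1), \dots, f(\psi_N)$ are then all positive or all negative, so $\sum_{n=1}^N f(\psi_n) \ne 0$. But with $\Psi = \{\tau_i\sigma_j\}$ the defining relations of $\eL_{l/k}$ give $\sum_{n=1}^N f(\psi_n) = \sum_{i=1}^I \sum_{j=1}^J f(\tau_i\sigma_j) = \sum_{i=1}^I 0 = 0$, a contradiction. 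Hence $\K(\beta) \cap \eL_{l/k} = \{\bo\}$, the restriction of (\ref{epsilon18}) to $\eL_{l/k}$ is injective, and (\ref{epsilon49}) has rank $\rank \eL_{l/k} = N-I = r(l/k)$.

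The only point that needs care is the bookkeeping with transversals: both Lemma \ref{lemdelta3} and the definition of $\eL_{l/k}$ are phrased in terms of $\{\tau_i\sigma_j\}$, so one should either fix $\Psi$ to be this transversal from the outset (harmless, since any transversal is permitted in the definition of $\Delta$) or note that the analogue of Lemma \ref{lemdelta3} holds for $\Psi$ by the same argument. Beyond that I expect no real obstacle, since the two substantive inputs — that $M(\beta,\Psi,\widehat w)$ has rank $N-1$ and that its nonzero null vectors have coordinates of a single sign — have already been supplied by Theorem \ref{thmspecial1}.
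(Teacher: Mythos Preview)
Your proposal is correct and follows essentially the same approach as the paper's own proof: both verify $\Delta(\beta,f)\in E_{l/k}$ via Lemma~\ref{lemdelta3} and the defining relations of $\eL_{l/k}$, and both establish injectivity on $\eL_{l/k}$ by combining the sign property of nonzero elements of $\K(\beta)$ from Theorem~\ref{thmspecial1}(v) with the zero-sum conditions in $\eL_{l/k}$. The only cosmetic difference is that you sum over all $n$ to obtain $\sum_n f(\psi_n)\neq 0$, whereas the paper points directly to a single block sum $\sum_j f(\tau_i\sigma_j)=0$; either yields the contradiction.
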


\begin{proof}  Let $v$ be a place in $W_{\infty}(k/\Q)$, and let $f$ belong to $\eL_{l/k}$.  Then we have
\begin{align}\label{epsilon60}
\begin{split}
\sum_{w | v} \log \bigl|\Delta(\beta, f)\bigr|_w 
  &= \sum_{w | v} \biggl(\sum_{i = 1}^I \sum_{j = 1}^J f(\tau_i \sigma_j) \log |\tau_i \sigma_j \beta|_w\biggr)\\
  &= \sum_{i = 1}^I \sum_{j = 1}^J f(\tau_i \sigma_j) \biggl(\sum_{w | v} \log |\tau_i \sigma_j \beta|_w\biggr).
\end{split}
\end{align}
By Lemma \ref{lemdelta3} the sum
\begin{equation*}\label{epsilon65}
\sum_{w | v} \log |\tau_i \sigma_j \beta|_w = c(\tau_i)
\end{equation*}
depends on $\tau_i$, but not on $\sigma_j$.  Hence on the right hand side of (\ref{epsilon60}) we get
\begin{equation}\label{epsilon69}
\sum_{j = 1}^J f(\tau_i \sigma_j) \biggl(\sum_{w | v} \log |\tau_i \sigma_j \beta|_w\biggr)
	= c(\tau_i) \sum_{j = 1}^J f(\tau_i \sigma_j) = 0
\end{equation}
for each $i = 1, 2, \dots , I$.  Combining (\ref{epsilon60}) and (\ref{epsilon69}), we conclude that
\begin{equation*}\label{epsilon73}
\sum_{w | v} \log \bigl|\Delta(\beta, f)\bigr|_w = 0
\end{equation*}
for each place $v$ in $W_{\infty}(k/\Q)$.  We have shown that if $f$ belongs to the subgroup $\eL_{l/k}$, then $\Delta(\beta, f)$
belongs to the subgroup $E_{l/k}$.

Next we prove that the image (\ref{epsilon49}) has rank $r(l/k)$.  As $\eL_{l/k}$ has rank $r(l/k)$, it suffices to show
that the map $f \mapsto \Delta(\beta, f)$, restricted to the subgroup $\eL_{l/k}$, is injective.  This will follow if we show that
its kernel is trivial.  That is, it suffices to show that
\begin{equation}\label{epsilon76}
\K(\beta) \cap \eL_{l/k} = \{\bo\},
\end{equation}
where $\K(\beta)$ is defined by (\ref{epsilon25}).   We have already noted that if $f \not= \bo$ belongs to $\K(\beta)$, then
\begin{equation*}\label{epsilon81}
n \mapsto f(\psi_n)
\end{equation*}
takes only positive values, or it takes only negative values.  However, if $f\not= \bo$ belongs to $\eL_{l/k}$, then
\begin{equation*}\label{epsilon85}
\sum_{j = 1}^J f(\tau_i \sigma_j) = 0
\end{equation*}
for each $i = 1, 2, \dots , I$.  Therefore the only point in the intersection (\ref{epsilon76}) is $\bo$, and the theorem is proved.
\end{proof}

As in the proof of Theorem \ref{thmdelta2}, the map $f \mapsto \Delta(\beta, f)$ restricted to $\eL_{l/k}$ is injective. Therefore 
we get the following result.

\begin{corollary}\label{cordelta1}  Let $\beta$ in $F_l$ be a special Minkowski unit, and let 
\begin{equation*}\label{epsilon87}
\{f_1, f_2, \dots f_R\},\quad\text{where $R = r(l/k)$,} 
\end{equation*}
be linearly independent elements in the free group $\eL_{l/k}$.  Then the elements of the set
\begin{equation*}\label{epsilon91}
\big\{\Delta(\beta, f_1), \Delta(\beta, f_2), \dots , \Delta(\beta, f_R)\big\}
\end{equation*}
are multiplicatively independent relative units in $E_{l/k}$.
\end{corollary}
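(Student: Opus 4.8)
The plan is to deduce the corollary directly from the injectivity of the homomorphism $f \mapsto \Delta(\beta, f)$ on $\eL_{l/k}$, which was established in the course of proving Theorem~\ref{thmdelta2}. First I would recall that, by the identity (\ref{epsilon17}), for each fixed $\beta$ in $F_l$ the map $f \mapsto \Delta(\beta, f)$ is a homomorphism from $\Z^N$ into $F_l$, and that by Theorem~\ref{thmdelta2} its restriction to the subgroup $\eL_{l/k}$ takes values in $E_{l/k}$. In particular each $\Delta(\beta, f_i)$ is a relative unit, which disposes of the membership claim in the statement.

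Next I would verify multiplicative independence. Suppose $m_1, m_2, \dots , m_R$ are integers with
\begin{equation*}
\prod_{i = 1}^R \Delta(\beta, f_i)^{m_i} = 1
\end{equation*}
in $F_l$. Applying (\ref{epsilon17}) repeatedly, the left-hand side equals $\Delta\bigl(\beta, \sum_{i = 1}^R m_i f_i\bigr)$, and therefore $\sum_{i = 1}^R m_i f_i$ lies in the kernel $\K(\beta)$ defined in (\ref{epsilon25}). Since $\eL_{l/k}$ is a subgroup of $\Z^N$, the element $\sum_{i = 1}^R m_i f_i$ also lies in $\eL_{l/k}$; but (\ref{epsilon76}) asserts that $\K(\beta) \cap \eL_{l/k} = \{\bo\}$, so $\sum_{i = 1}^R m_i f_i = \bo$. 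Because $f_1, f_2, \dots , f_R$ were assumed linearly independent in the free group $\eL_{l/k}$, this forces $m_1 = m_2 = \cdots = m_R = 0$, which is exactly the multiplicative independence of the set $\{\Delta(\beta, f_1), \Delta(\beta, f_2), \dots , \Delta(\beta, f_R)\}$.

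There is no real obstacle here: the content of the statement was already packaged in Theorem~\ref{thmdelta2}, whose proof shows both that $\Delta(\beta, \cdot)$ sends $\eL_{l/k}$ into $E_{l/k}$ and that this restricted map is injective. The only point worth spelling out is the passage from ``the kernel of a homomorphism on a free abelian group is trivial'' to ``linearly independent elements have multiplicatively independent images,'' which is precisely the short computation above, resting on the bi-homomorphism identity (\ref{epsilon17}) together with the sign dichotomy in part~(v) of Theorem~\ref{thmspecial1} that underlies (\ref{epsilon76}).
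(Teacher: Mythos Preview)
Your proof is correct and follows the same approach as the paper. The paper's own justification is the single sentence preceding the corollary, noting that the map $f \mapsto \Delta(\beta, f)$ restricted to $\eL_{l/k}$ is injective (as shown via (\ref{epsilon76}) in the proof of Theorem~\ref{thmdelta2}); you have simply spelled out the routine passage from injectivity of a homomorphism to multiplicative independence of the images of a linearly independent set.
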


%%%%%%%%%%%%%%%%%%%%%%%%%%%%%%%%%%%%%%%%%%%%%%%%%%%%%%%%%%%%%%%
\section{Relative Units IV: Existence}\label{E}

We continue to assume that $l/\Q$ and $k/\Q$ are finite, Galois extensions such that
\begin{equation*}\label{mink0}
\Q \subseteq k \subseteq l,\quad\text{and}\quad 1 \le r(k) < r(l),
\end{equation*}
and we let $\widehat{w}$ denote a particular archimedean place of $l$.  If $l/\Q$ is a totally complex Galois
extension, then $G_{\widehat{w}}$ is cyclic of order $2$, and we write
\begin{equation}\label{mink1}
G_{\widehat{w}} = \{1, \rho\},\quad\text{where $\rho^2 = 1$}.
\end{equation}
In this section we apply Lemma \ref{lemmap3} with
\begin{equation*}\label{mink3}
K = G_{\widehat{w}},\quad H = \Aut(l/k),\quad\text{and}\quad G = \Aut(l/\Q),
\end{equation*}
and with $\lambda$ in $\eL_{l/k}$ defined by (\ref{map49}).  Let $\{\tau_1, \tau_2, \dots , \tau_I\}$ be a transversal for the left 
cosets of $G_{\widehat{w}} H$ in $G$, and let $\{\sigma_1, \sigma_2, \dots , \sigma_J\}$ be a transversal for the left cosets 
of $G_{\widehat{w}}$ in $G_{\widehat{w}} H$.  For each $i = 1, 2, \dots , I$, let 
\begin{equation*}\label{mink4}
\J_i \subseteq \{1, 2, \dots , J\}
\end{equation*}
be a subset of cardinality $|\J_i| = J - 1$.  Using (\ref{delta56}) and (\ref{delta58}) we have
\begin{equation*}\label{mink5}
J = [G_{\widehat{w}} H : G_{\widehat{w}}] = |H| = [l : k].
\end{equation*}
Then letting $\tau_1$ be a coset representative in $G_{\widehat{w}} H$, and letting $\sigma_1$ be a coset representative
in $G_{\widehat{w}}$, we get
\begin{equation}\label{mink6}
\begin{split}
\|\lambda\|_1 &= \sum_{i = 1}^I \sum_{j = 1}^J |\lambda(\tau_i \sigma_j)| = \sum_{j = 1}^J |\lambda(\tau_1 \sigma_j)|\\
		     &= |\lambda(\tau_1 \sigma_1)| + \sum_{j = 2}^J |\lambda(\tau_1 \sigma_j)| = (J - 1) + (J - 1)\\
		     &= 2\bigl([l : k] - 1\bigr).
\end{split}
\end{equation}

The following result establishes the existence of relative Minkowski units.

\begin{theorem}\label{thmmink1}  Let $\beta$ in $F_l$ be a special Minkowski unit with respect to the infinite place $\widehat{w}$.
\begin{itemize}
\item[(i)]  If $l/\Q$ is a totally real Galois extension, then the elements in the set
\begin{equation}\label{mink7}
\big\{\tau_i \sigma_j \bigl(\Delta(\beta, \lambda)\bigr) : \text{$i = 1, 2, \dots , I$ and $j \in \J_i$}\big\}
\end{equation}
are multiplicatively independent relative units in $E_{l/k}$, and satisfy
\begin{equation}\label{mink9}
h\bigl(\Delta(\beta, \lambda)\bigr) \le 2([l : k] - 1) h(\beta).
\end{equation}
\item[(ii)]  If $l/\Q$ is a totally complex Galois extension, then the elements in the set
\begin{equation}\label{mink13}
\big\{\tau_i \sigma_j \bigl(\Delta(\beta \rho(\beta)\bigr), \lambda)\bigr) : \text{$i = 1, 2, \dots , I$ and $j \in \J_i$}\big\}
\end{equation}
are multiplicatively independent relative units in $E_{l/k}$, and satisfy
\begin{equation}\label{mink17}
h\bigl(\Delta(\beta \rho(\beta), \lambda)\bigr) \le 4([l : k] - 1) h(\beta).
\end{equation}
\end{itemize}
\end{theorem}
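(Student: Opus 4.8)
The plan is to assemble results already established: the explicit linearly independent family of $G$-translates of the function $\lambda$ produced by Lemma \ref{lemmap3}, the $G$-equivariance of the bi-homomorphism $\Delta$ recorded in Lemma \ref{lemdelta2}, the fact from Corollary \ref{cordelta1} that $\Delta$ carries linearly independent subsets of $\eL_{l/k}$ to multiplicatively independent sets of relative units when its first argument is a special Minkowski unit, and the height estimate of Lemma \ref{lemdelta1} combined with the norm computation (\ref{mink6}).

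First I would treat case (i), where $l/\Q$ is totally real, so that $G_{\widehat{w}}$ is trivial. With $K = G_{\widehat{w}}$, $H = \Aut(l/k)$, $G = \Aut(l/\Q)$, the transversal $\{\tau_1, \dots, \tau_I\}$ for $G_{\widehat{w}} H$ in $G$, the transversal $\{\sigma_1, \dots, \sigma_J\}$ for $G_{\widehat{w}}$ in $G_{\widehat{w}} H$, and $\lambda$ as in (\ref{map49}), Lemma \ref{lemmap1} and Lemma \ref{lemmap3} give that the $N - I = r(l/k)$ functions $[\tau_i \sigma_j, \lambda]$ with $i = 1, \dots, I$ and $j \in \J_i$ are linearly independent elements of $\eL_{l/k}$. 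By the totally real case of Lemma \ref{lemdelta2}, $\tau_i \sigma_j\bigl(\Delta(\beta, \lambda)\bigr) = \Delta\bigl(\beta, [\tau_i \sigma_j, \lambda]\bigr)$ for each such pair. Applying Corollary \ref{cordelta1} to the special Minkowski unit $\beta$ and the independent family $\{[\tau_i \sigma_j, \lambda]\}$ then shows that the $r(l/k)$ conjugates in (\ref{mink7}) are multiplicatively independent relative units in $E_{l/k}$; in particular $\Delta(\beta, \lambda)$ is a relative Minkowski unit. For the height, Lemma \ref{lemdelta1} and (\ref{mink6}) give $h\bigl(\Delta(\beta, \lambda)\bigr) \le \|\lambda\|_1\, h(\beta) = 2([l:k] - 1)\, h(\beta)$, which is (\ref{mink9}).

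Case (ii), where $l/\Q$ is totally complex, runs identically with $\beta$ replaced throughout by $\beta \rho(\beta)$. By Lemma \ref{lemspecial3} the element $\beta \rho(\beta)$ is again a special Minkowski unit with respect to $\widehat{w}$, so Corollary \ref{cordelta1} may be applied to it. Lemma \ref{lemmap3} furnishes the same linearly independent family $\{[\tau_i \sigma_j, \lambda]\}$ of $r(l/k)$ elements of $\eL_{l/k}$, and the totally complex case of Lemma \ref{lemdelta2} gives $\tau_i \sigma_j\bigl(\Delta(\beta \rho(\beta), \lambda)\bigr) = \Delta\bigl(\beta \rho(\beta), [\tau_i \sigma_j, \lambda]\bigr)$; combining these with Corollary \ref{cordelta1} yields the multiplicative independence of the set (\ref{mink13}). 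For the height, Lemma \ref{lemdelta1} gives $h\bigl(\Delta(\beta \rho(\beta), \lambda)\bigr) \le \|\lambda\|_1\, h\bigl(\beta \rho(\beta)\bigr)$, and the triangle inequality for the Weil height together with $h\bigl(\rho(\beta)\bigr) = h(\beta)$ bounds $h\bigl(\beta \rho(\beta)\bigr) \le 2 h(\beta)$, so the right-hand side equals $4([l:k] - 1)\, h(\beta)$, which is (\ref{mink17}).

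I do not expect a genuine obstacle here: once the structural results of Sections 3, 7, and 8 are in hand, the argument is bookkeeping. The single point that requires care is checking that the transversals $\{\tau_i\}$ and $\{\sigma_j\}$ appearing in the statement are precisely the ones for which Lemma \ref{lemmap3} was set up, which is arranged in (\ref{delta37})--(\ref{delta63}), and that the distinction between the cases $K \subseteq H$ and $K \cap H = \{1\}$ of Section 3 is handled correctly. The only extra ingredient specific to the totally complex case is Lemma \ref{lemspecial3}, which is what forces $\beta \rho(\beta)$, rather than $\beta$ itself, to be the right argument for $\Delta$.
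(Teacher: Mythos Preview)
Your proposal is correct and follows essentially the same route as the paper: invoke Lemma \ref{lemmap3} for the linear independence of the $[\tau_i\sigma_j,\lambda]$, apply Corollary \ref{cordelta1} (with $\beta\rho(\beta)$ in the complex case via Lemma \ref{lemspecial3}), use the equivariance identities of Lemma \ref{lemdelta2} to rewrite the resulting units as Galois conjugates, and finish with Lemma \ref{lemdelta1} together with $\|\lambda\|_1 = 2([l:k]-1)$ from (\ref{mink6}). The ordering and emphasis differ slightly, but the ingredients and logic are identical.
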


\begin{proof}  By Lemma \ref{lemmap3} the functions
\begin{equation*}\label{mink25}
\big\{[\tau_i \sigma_j, \lambda] : \text{$i = 1, 2, \dots , I$ and $j \in \J_i$}\big\} \subseteq \eL_{l/k}
\end{equation*}
are linearly independent.  Let $\beta$ in $F_l$ be a special Minkowski unit.  Corollary \ref{cordelta1} implies that 
the algebraic numbers
\begin{equation}\label{mink30}
\big\{\Delta\bigl(\beta, [\tau_i \sigma_j, \lambda]\bigr) : \text{$i = 1, 2, \dots , I$ and $j \in \J_i$}\big\}
\end{equation}
are multiplicatively independent relative units in $E_{l/k}$.  If $l/\Q$ is a totally complex Galois extension such that
\begin{equation}\label{mink31}
G_{\widehat{w}} = \{1, \rho\},\quad\text{where $\rho^2 = 1$},
\end{equation}
then it follows from Lemma \ref{lemspecial3} that $\beta \rho(\beta)$ is a special Minkowski unit with respect to $\widehat{w}$.
In this case Corollary \ref{cordelta1} asserts that the algebraic numbers
\begin{equation}\label{mink32}
\big\{\Delta\bigl(\beta \rho(\beta), [\tau_i \sigma_j, \lambda]\bigr) : \text{$i = 1, 2, \dots , I$ and $j \in \J_i$}\big\}
\end{equation}
are multiplicatively independent relative units in $E_{l/k}$.

Assume that $l/\Q$ is a totally real Galois extension.  Then it follows from (\ref{eta44}) in the statement of 
Lemma \ref{lemdelta2}, that 
\begin{equation}\label{mink35}
\tau_i \sigma_j\bigl(\Delta(\beta, \lambda)\bigr) = \Delta\bigl(\beta, [\tau_i \sigma_j, \lambda]\bigr)
\end{equation}
for each $i = 1, 2, \dots , I$ and each $j \in \J_i$.  Combining (\ref{mink30}) and (\ref{mink35}), we find that the conjugate 
algebraic numbers in the set
\begin{equation*}\label{mink40}
\big\{\tau_i \sigma_j \bigl(\Delta\bigl(\beta, \lambda\bigr)\bigr) : \text{$i = 1, 2, \dots , I$ and $j \in \J_i$}\big\}
\end{equation*}
are multiplicatively independent relative units in $E_{l/k}$.  That is, $\Delta\bigl(\beta, \lambda\bigr)$ is a relative Minkowski
unit in $E_{l/k}$.  Applying the inequality (\ref{epsilon20}) and (\ref{mink6}), we get the bound
\begin{equation*}\label{mink45}
h\bigl(\Delta\bigl(\beta, \lambda\bigr)\bigr) \le \|\lambda\|_1 h(\beta) = 2([l : k] - 1) h(\beta).
\end{equation*}
This verifies the inequality (\ref{mink9}).

Now assume that $l/\Q$ is a totally complex Galois extension, and let $G_{\widehat{w}}$ be given by (\ref{mink31}).
It follows from (\ref{eta53}) in the statement of Lemma \ref{lemdelta2}, that
\begin{equation}\label{mink55}
\tau_i \sigma_j\bigl(\Delta(\beta \rho(\beta), \lambda)\bigr) = \Delta\bigl(\beta \rho(\beta), [\tau_i \sigma_j, \lambda]\bigr)
\end{equation}
for each $i = 1, 2, \dots , I$ and each $j \in \J_i$.  In this case we combine (\ref{mink30}) and (\ref{mink55}) and
conclude that the conjugate algebraic numbers in the set
\begin{equation*}\label{mink63}
\big\{\tau_i \sigma_j \bigl(\Delta(\beta \rho(\beta)\bigr), \lambda)\bigr) : \text{$i = 1, 2, \dots , I$ and $j \in \J_i$}\big\}
\end{equation*}
are multiplicatively independent relative units in $E_{l/k}$.  Therefore $\Delta(\beta \rho(\beta), \lambda)$ is a 
relative Minkowski unit in $E_{l/k}$.  It follows from the bound (\ref{epsilon20}) and (\ref{mink6}), that
\begin{align*}\label{mink69}
h\bigl(\Delta(\beta \rho(\beta), \lambda)\bigr) &\le \|\lambda\|_1 h(\beta \rho(\beta))\\ 
	&\le 2([l : k] - 1) \bigl(h(\beta) + h(\rho(\beta))\bigr)\\
	&= 4([l : k] - 1) h(\beta).
\end{align*} 
This establishes the inequality (\ref{mink17}).
\end{proof}

We now prove Theorem \ref{maintheorem2}.  Let $l/\Q$ be a Galois extension with $N$ archimedean places, and let $\widehat{w}$ 
be a particular archimedean place of $l$.  Let $\eta_1, \eta_2, \dots , \eta_{r(l)}$ be a basis for the group $F_l$, where $r(l) = N - 1$.  
By Theorem \ref{thmspecial1} there exists a special Minkowski unit $\beta$ with respect to $\widehat{w}$, such that
\begin{equation}\label{mink71}
h(\beta) \le 2 \sum_{j=1}^{r(l)} h(\eta_j).
\end{equation}

If $l/\Q$ is totally real, then it follows from (i) in the statement of Theorem \ref{thmmink1} that 
\begin{equation*}\label{mink77}
\gamma = \Delta(\beta, \lambda)
\end{equation*}
is a relative Minkowski unit for the subgroup $E_{l/k}$.  Combining the inequalities (\ref{mink9}) and (\ref{mink71}), we find that
\begin{equation}\label{mink81}
h (\gamma) = h\bigl(\Delta(\beta, \lambda)\bigr) \le 2\bigl([l : k] - 1\bigr) h(\beta) \le 4\bigl([l : k] - 1\bigr) \sum_{j = 1}^{r(l)} h(\eta_j),
\end{equation}
and this verifies (\ref{first52}).

Now suppose that $l/\Q$ is a totally complex Galois extension.  In this case the stabilizer $G_{\widehat{w}}$ is cyclic of order 
$2$, and we use (\ref{mink1}).  It follows from (ii) in the statement of Theorem \ref{thmmink1} that
\begin{equation*}\label{mink95}
\gamma = \Delta(\beta \rho(\beta), \lambda)
\end{equation*}
is a relative Minkowski unit for the subgroup $E_{l/k}$.  To complete the proof we combine the inequalities (\ref{mink17})
and (\ref{mink71}).  We find that
\begin{equation*}\label{mink94}
h (\gamma) = h\bigl(\Delta(\beta \rho(\beta), \lambda)\bigr) \le 4\bigl([l : k] - 1\bigr) h(\beta) 
	           \le 8\bigl([l : k] - 1\bigr) \sum_{j = 1}^{r(l)} h(\eta_j),
\end{equation*}
and this proves (\ref{first55}).

\section*{Acknowledgments}

We are grateful to the anonymous referees for their helpful comments and suggestions.

%%%%%%%%%%%%%%%%%%%%%%%%%%%%%%%%%%%%%%%%%%%%%%%%%%%%%%%%%%%%%%%

%\today
\end{document}